\setlist{nosep} 
    \newtheorem*{rep@theorem}{\rep@title}
    \newcommand{\newreptheorem}[2]{%
    \newenvironment{rep#1}[1]{%
    \def\rep@title{#2 \ref{##1}}%
    \begin{rep@theorem}}%
    {\end{rep@theorem}}}
\newtheorem{theorem}{Theorem}[section]
\newtheorem{proposition}[theorem]{Proposition}
\newtheorem{corollary}[theorem]{Corollary}
\newtheorem{lemma}[theorem]{Lemma}
\newtheorem{Set up}[theorem]{Set-up}
\newtheorem{question}[theorem]{Question}
\theoremstyle{definition}
\newtheorem{definition}[theorem]{Definition}
\newtheorem{example}[theorem]{Example}
\newtheorem{remark}[theorem]{Remark}
\newtheorem*{answer*}{Answer}
\newtheorem*{application*}{Application}
\DeclarePairedDelimiterX{\Norm}[1]{\lVert}{\rVert}{#1}
\theoremstyle{definition}
\renewcommand*{\frak}{\mathfrak}
  \newcommand{\calA}{\mathcal{A}}
  \newcommand{\calC}{\mathcal{C}}
  \newcommand{\calI}{\mathcal{I}}
  \newcommand{\calV}{\mathcal{V}}
  \newcommand{\calW}{\mathcal{W}}
\renewcommand*{\backrefalt}[4]{\ifcase #1 (Not cited).\or (Cited p.~#2).\else (Cited pp.~#2).\fi} 
\newcounter{shcount}
\newcommand*{\bsh}[1]{\theoremstyle{definition}\newtheorem{subhead\theshcount}[theorem]{#1}
    \begin{subhead\theshcount}} 
\newcommand*{\esh}{\end{subhead\theshcount}\stepcounter{shcount}} 
\newcounter{enumlabelcount}
\newcommand\enumlabel[1][]{\item[#1]
    \refstepcounter{enumlabelcount}\def\@currentlabel{#1}}\makeatother
\definecolor{harrycomment}{rgb}{0.6,0,0.4}
\DeclareMathOperator{\dist}{\mathsf{d}}
\DeclareMathOperator{\diam}{diam}
\DeclareMathOperator{\vbig}{Big}
\DeclareMathOperator{\cay}{Cay}
\DeclareMathOperator{\isom}{Isom}
\DeclareMathOperator{\mcg}{MCG}
\DeclareMathOperator{\Out}{Out}
\DeclareMathOperator{\Stab}{Stab}
\newcommand*{\R}{\mathbb R}
\newcommand*{\s}{\mathfrak S}
\newcommand*{\Z}{\mathbb Z}
\newcommand{\eps}{\varepsilon}
\renewcommand*{\epsilon}{\eps}
\newcommand*{\mc}{\mathcal}
\newcommand*{\nest}{\sqsubseteq}
\newcommand*{\pnest}{\sqsubsetneq}
\newcommand*{\pconest}{\sqsupsetneq}
\newcommand*{\trans}{\pitchfork}
\DeclareMathOperator{\relevant}{Rel}
\DeclareMathOperator{\Rel}{Rel}
\newcounter{claimcount}
\newenvironment{claim*}[1]{\par\vspace{2mm}\noindent
    \underline{Claim:}\hspace{2mm}#1}{}
\newenvironment{claimproof}[1]{\par\vspace{2mm}\noindent\underline{Proof:}\hspace{2mm}#1}
    {\leavevmode\unskip\penalty9999\hbox{}\nobreak\hfill\quad\hbox{$\diamondsuit$}\vspace{2mm}}
\def\subsection{\@startsection{subsection}{1}\z@{.7\linespacing\@plus\linespacing}
    {.5\linespacing}{\normalfont\scshape\centering}}\makeatother 
\title[Translation lengths in HHGs]{Uniform undistortion from barycentres, and applications to hierarchically hyperbolic groups} 
\author{Carolyn Abbott}
\address{Department of Mathematics, Brandeis University, Waltham, MA USA}
\email{carolynabbott@brandeis.edu}
\author{Mark Hagen}
\address{School of Mathematics, University of Bristol, Bristol, United Kingdom}
\email{markfhagen@posteo.net}
\author{Harry Petyt}
\address{Mathematical Institute, University of Oxford, UK}
\email{petyt@maths.ox.ac.uk}
\author{Abdul Zalloum}
\address{Institute for Advanced Study in Mathematics, Harbin Institute of Technology, China}
\email{abdul.zalloum@utoronto.ca}
\begin{document}

\begin{abstract}
We show that infinite cyclic subgroups of groups acting uniformly properly on injective metric spaces are uniformly undistorted. In the special case of hierarchically hyperbolic groups, we use this to study translation lengths for actions on the associated hyperbolic spaces. We then use quasimorphisms to produce examples where these latter results are sharp.
\end{abstract}

\maketitle

\setcounter{tocdepth}{2}\tableofcontents\setcounter{tocdepth}{3}

\section{Introduction}

An informative object relating to an isometric action of a group $G$ on a metric space $(X,\dist)$ is the \emph{(stable) translation length} function $\tau_X\colon G\to[0,\infty)$, which is given by 
$$\tau_X(g)=\lim_{n\to\infty}\frac{\dist(x,g^nx)}{n}.$$
(It is independent of $x$ by the triangle inequality.)  In a classical situation, $X$ is the universal cover of a Riemannian manifold $M$, with $G=\pi_1M$, and $\tau_X(g)$ corresponds to the length of a geodesic in $M$ representing the conjugacy class of $g$.  However, translation lengths are much more broadly applicable, for instance in work of Gromoll--Wolf on actions on nonpositively-curved manifolds \cite{gromollwolf:some} and work of Gersten--Short on biautomaticity \cite{gerstenshort:rational}.

While the precise values of the function $\tau_X$ depend on both $X$ and the action, many properties of the image of $\tau_X$ (the translation length \emph{spectrum}) do not.  For instance, if $G$ acts isometrically on $X$ and $Y$ and they admit a $G$--equivariant $(K,K)$--quasi-isometry, then $\frac{1}{K}\tau_X\leq \tau_Y\leq K\tau_X$. In particular, $g\in G$ is loxodromic with respect to $X$ (i.e., $\tau_X(g)>0$) if and only if it is loxodromic with respect to $Y$.

A $G$--action on $X$ is \emph{translation discrete} if there is a constant $\tau_0>0$ such that for each $g\in G$, either $\langle g\rangle$ has bounded orbits in $X$ (in particular, $\tau_X(g)=0$), or $\tau_X(g)\geq \tau_0$. The above shows that translation discreteness is invariant under equivariant quasi-isometries. In this paper, we consider translation discreteness in two main settings: for $X$ a proper Cayley graph of $G$; and for (improper) actions on hyperbolic spaces arising in the context of hierarchically hyperbolic groups.

\subsection{Translation discreteness in Cayley graphs} 

Let $G$ be a finitely generated group with a word metric. In this case, if $\tau_G(g)>0$, then $g$ has infinite order and the inclusion of the cyclic subgroup it generates is a quasiisometric embedding: $\langle g\rangle$, or $g$, is \emph{undistorted}. Translation discreteness of the action of $G$ on itself therefore equates to infinite cyclic subgroups being \emph{uniformly} undistorted.

Uniform undistortion has stronger consequences than mere undistortion. For instance, it implies that solvable subgroups of finite virtual cohomological dimension are abelian \cite{conner:discreteness}, and finitely generated abelian subgroups are undistorted (though not necessarily uniformly) \cite{button:aspects}. 

\bsh{Non-examples} \label{rem:conner}
The simplest obstruction to uniform undistortion is the existence of distorted elements. Well-known examples include Baumslag--Solitar groups $BS(p,q)$ with $|p|\neq|q|$, and virtually nilpotent groups that are not virtually abelian \cite[Lem.~14.15]{drutukapovich:geometric}. Despite having distorted elements, in many such examples there is a translation length \emph{gap}: there exists $\tau_0>0$ such that if $\tau_G(g)\ne0$, then $\tau_G(g)\geq\tau_0$. 

For example, let $G$ be the integer Heisenberg group $\langle x,y,z\mid [x,z]=[y,z]=1,[x,y]=z\rangle$. All powers of $z$ have zero translation length. However, every other $g\in G$ has nontrivial image under the $1$--Lipschitz epimorphism $G\to\mathbb Z^2$ given by $x\mapsto(1,0)$, $y\mapsto(0,1)$, $z\mapsto (0,0)$, and hence has $\tau_G(g)\ge1$. The Baumslag-Solitar group $H=\langle a\rangle*_{a^b=a^2}$ is similar: $\tau_H(a)=0$, but by considering axes in the Bass--Serre tree of the given splitting, one sees that $\tau_H$ is bounded away from $0$ on $H-\langle a\rangle$.

There also exist soluble groups $G$ where $\tau_G$ takes arbitrarily small values, even though $\tau_G(g)>0$ for all $g\ne1$ \cite{conner:discreteness}. Indeed, Conner gives an explicit linear map $M\in\operatorname{GL}_4(\mathbb Z)$ such that all cyclic subgroups of the group $G=\mathbb Z^4\rtimes_M\mathbb Z$ are undistorted, but not uniformly \cite[Eg.~7.1]{conner:discreteness}.
\esh

Uniform undistortion is known in various nonpositively curved situations, such as in hyperbolic groups \cite{gromov:hyperbolic,swenson:hyperbolic,delzant:sousgroupes}, CAT(0) groups \cite{conner:translation}, Garside groups \cite{huangosajda:helly}, Helly groups \cite{haettelosajda:locally}, groups satisfying various (graphical) small-cancellation conditions \cite{ArzhantsevaCashenGruberHume:negative}, mapping class groups \cite{bowditch:tight}, and $\Out (F_n)$ \cite{alibegovic:translation}.  It is often established directly by constructing uniform-quality quasi-axes in some space on which $G$ acts. A beautiful example is Haglund's construction of combinatorial axes for loxodromic isometries of CAT(0) cube complexes \cite{haglund:isometries}, which generalises the classical case of trees \cite{serre:trees}. 

Our first theorem extends this list to cover injective spaces. See Definition~\ref{defn:injective-space} for injective spaces, and Definition~\ref{defn:acyl} for uniform properness. Note that every proper and cobounded action is uniformly proper.

\begin{theorem}\label{thm:translation discrete-injective}
Let $G$ be a group acting uniformly properly on an injective metric space $X$. Infinite-order elements of $G$ are uniformly undistorted. Moreover, every infinite-order $g\in G$ admits an $\eps$--quasi-axis in $X$ for every $\eps>0$.
\end{theorem}

 Theorem~\ref{thm:translation discrete-injective} is a combination of Propositions~\ref{prop:undistorted} and~\ref{prop:quasiaxes}. 

Note that Theorem~\ref{thm:translation discrete-injective} does not require $X$ to be proper or the action to be cocompact. Under those assumptions, the statement about quasi-axes can be strengthened: each infinite-order $g\in G$ is semisimple by \cite[Thm~II.6.10]{bridsonhaefliger:metric}, hence has positive translation length by \cite[Prop.~1.2]{lang:injective}, and so has a geodesic axis. In particular, translation discreteness was already known for groups acting properly and cocompactly on proper injective spaces.  This includes Helly groups \cite[Thm~6.3]{chalopinchepoigenevoishiraiosajda:helly} and cocompactly cubulated groups \cite{bowditch:median}. 

The greater generality of Theorem~\ref{thm:translation discrete-injective} can already be seen in \emph{hierarchically hyperbolic groups}, which act properly and \emph{coboundedly} on injective spaces \cite{haettelhodapetyt:coarse,petytzalloum:constructing}. For many such groups, such as mapping class groups, we expect that there is no proper cocompact action on an injective space. Hierarchical hyperbolicity will be discussed further in Section~\ref{subsec:intro_HHG}. 


\begin{corollary}[HHGs are translation discrete]\label{cor:HHG-translation discrete}
Let $(G,\mathfrak S)$ be a hierarchically hyperbolic group. There exists $\tau_0>0$ such that $\tau_G(g)\geq \tau_0$ for all infinite-order $g\in G$.
\end{corollary}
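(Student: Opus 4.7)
The plan is to reduce immediately to Theorem~\ref{thm:translation discrete-injective}. The only ingredient needed is a metrically proper, cobounded isometric action of $G$ on an injective metric space, and this is precisely what \cite{haettelhodapetyt:coarse} provides for every hierarchically hyperbolic group $(G,\mathfrak S)$. Feeding that action into Theorem~\ref{thm:translation discrete-injective} yields, verbatim, the desired uniform lower bound $\tau_0>0$ on $\tau_G(g)$ for all infinite-order $g\in G$.

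Since the conclusion of Theorem~\ref{thm:translation discrete-injective} is already phrased in terms of $\tau_G$, i.e.\ the translation length of $g$ acting on a Cayley graph of $G$ for a finite generating set, no post-processing is needed to match the statement of the corollary. The only point I would verify is that the action produced in \cite{haettelhodapetyt:coarse} genuinely satisfies both quantitative hypotheses of Theorem~\ref{thm:translation discrete-injective} --- \emph{metric} properness and coboundedness --- rather than weaker topological or coarse-level variants; this is part of the statement of that construction.

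I do not anticipate any substantive obstacle. The corollary is a pure ``combine two theorems'' statement: the genuine work is carried out in Theorem~\ref{thm:translation discrete-injective} (via the barycentre machinery of Section~\ref{sec:UU}) and in the construction of the injective metric space associated to an HHG, both of which are available off the shelf. In particular, one does not need to return to the combinatorial structure of $(G,\mathfrak S)$ --- the hierarchical hyperbolicity is used only to invoke \cite{haettelhodapetyt:coarse}, after which the argument is agnostic about where the injective action came from.
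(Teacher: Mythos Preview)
Your proposal is correct and matches the paper's approach: the paper proves this as Proposition~\ref{prop:HHG-uniform-undistortion} by invoking \cite[Cor.~3.8, Lem.~3.10]{haettelhodapetyt:coarse} to obtain a metrically proper, cobounded action on an injective space and then applying the barycentre machinery (Lemma~\ref{lem:injective_barycentres} and Proposition~\ref{prop:undistorted}), which is exactly the content of Theorem~\ref{thm:translation discrete-injective}. The only cosmetic difference is that the paper unpacks Theorem~\ref{thm:translation discrete-injective} into its constituent lemmas and spells out the quasi-isometry constant relating $\tau_X$ to $\tau_G$, whereas you cite the packaged theorem directly.
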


Corollary~\ref{cor:HHG-translation discrete} strengthens earlier results of \cite{durhamhagensisto:boundaries,durhamhagensisto:correction}, by showing that HHGs have uniformly undistorted cyclic subgroups; those earlier results did not establish uniformity. Together with \cite[Thm~4.2]{button:aspects}, one recovers the fact that their abelian subgroups are undistorted, previously established in \cite[Cor.~H]{haettelhodapetyt:coarse} and \cite[Prop.~2.17]{hagenrussellsistospriano:equivariant}.

\subsection{Non-proper actions arising from hierarchical hyperbolicity} \label{subsec:intro_HHG} 

The second main setting in which we consider translation discreteness is for certain non-proper actions on hyperbolic spaces that arise in the study of hierarchical hyperbolicity.

There are many examples illustrating the importance of non-proper actions on hyperbolic spaces, with the actions of mapping class groups on curve graphs being perhaps the most prominent. Other well-known examples include the extension graph of a right-angled Artin group \cite{kimkoberda:embedability,kimkoberda:geometry}, and the contact graph of a cubulated group \cite{hagen:weak} more generally.

For the action of the mapping class group on the curve graph, translation discreteness was established by Bowditch as a consequence of acylindricity \cite{bowditch:tight}, and since then there has been considerable study of the translation length spectrum for mapping class groups and analogous examples, e.g., \cite{baikseoshin:onfiniteness,aougabtaylor:pseudoanosovs,bellwebb:polynomialtime,mangahas:recipe,genevois:translation}. The situation for cubulated groups is more complicated: although the action on the contact graph is WPD \cite{behrstockhagensisto:hierarchically:1}, it need not be acylindrical \cite{shepherd:cubulation}, and translation lengths can even accumulate on zero \cite{genevois:translation}. These issues disappear when the cube complex admits a \emph{factor system}, however \cite{behrstockhagensisto:hierarchically:1}. This is a very general situation \cite{hagensusse:onhierarchical} that includes the case where the group is virtually compact special \cite{haglundwise:special}.

Introduced in \cite{behrstockhagensisto:hierarchically:1,behrstockhagensisto:hierarchically:2}, hierarchically hyperbolic groups (HHGs) are a common generalisation of mapping class groups and virtually compact special groups that now encompasses a wide range of examples \cite{behrstockhagensisto:hierarchically:2,berlairobbio:refined,robbiospriano:hierarchical,berlynerussell:hierarchical,chesser:stable,behrstockhagenmartinsisto:combinatorial,hagenmartinsisto:extra,hagenrussellsistospriano:equivariant}. More background will be given in Section~\ref{sec:background}, but for now let us just say that part of the definition of $G$ being an HHG includes a set $\s$, on which $G$ acts cofinitely, and a hyperbolic space $\calC U$ associated with each element $U\in\s$. Moreover, $G$ acts by isometries on the extended metric space $\prod_{U\in\s}\calC U$, permuting the factors. We are interested in the action of $\Stab_G(U)$ on $\mathcal CU$.

In \cite{durhamhagensisto:boundaries,durhamhagensisto:correction}, it was shown that (up to passing to a uniform power), for each infinite-order $g\in G$, there is some $U\in\s$ stabilised by $g$ and such that the translation length $\tau_{\mathcal CU}(g)$ is positive.  Undistortion of cyclic subgroups in HHGs is a straightforward consequence. However, the argument cannot be adapted to yield a uniform lower bound on $\tau_{\calC U}(g)$ or $\tau_G(g)$.  Our logic in this paper is reversed: we use the lower bound on $\tau_G(g)$ provided by Corollary~\ref{cor:HHG-translation discrete} to analyse $\tau_{\mathcal CU}(g)$. More precisely, we prove the following.

\begin{theorem}\label{thm:main}
Let $(G,\s)$ be a hierarchically hyperbolic group.  There exist $n \in \mathbb Z$ and $\tau_0>0$, depending only on $G$ and $\s$, such that for any infinite-order $g\in G$ there exists $U\in\s$ with $\tau_{\calC U}(g^n)\geq \tau_0$.
\end{theorem}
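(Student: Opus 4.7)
The plan is to combine the uniform translation discreteness of the $G$--action on itself (Corollary~\ref{cor:HHG-translation discrete}) with the HHG distance formula and a pigeonhole argument on the uniformly finite set $\vbig(g)$. Fix an infinite-order $g\in G$ and set $h=g^n$, where $n$ is the uniform power from \cite{durhamhagensisto:boundaries} such that $h$ stabilises every $U\in\vbig(g)=\vbig(h)$ and the $\langle h\rangle$-orbit is unbounded in each such $\calC U$. Recall also that $|\vbig(h)|\le N$ for a uniform $N$ depending only on $(G,\s)$.

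By Corollary~\ref{cor:HHG-translation discrete}, there is a uniform $\tau_0^G>0$ with $\tau_G(g)\ge\tau_0^G$, and so $\dist_G(1,h^m)\ge n m\tau_0^G$ for every $m\ge 1$. On the other hand, the HHG distance formula with threshold $s$ sufficiently large gives
\[
\dist_G(1,h^m) \;\le\; K\sum_{U\in\s}\bigl\lfloor\dist_{\calC U}(1,h^m)\bigr\rfloor_s + K.
\]
I would split the sum into $U\in\vbig(h)$ and $U\notin\vbig(h)$ and isolate the following key lemma, which I regard as the main obstacle: the contribution $\sum_{U\notin\vbig(h)}\lfloor\dist_{\calC U}(1,h^m)\rfloor_s$ is bounded by a constant $C_0$ independent of $m$ (equivalently, all $U\in\s$ supporting unbounded $\langle h\rangle$-projections already lie in $\vbig(h)$, and the remaining relevant domains give only a uniformly bounded error).

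I expect this lemma to follow from the very construction of $\vbig$ in \cite{durhamhagensisto:boundaries, durhamhagensisto:correction}: domains outside $\vbig(h)$ are those whose $\langle h\rangle$-projections stay bounded, either because they are $h$--invariant with bounded orbit in $\calC U$, or because the infinite $h$--orbit of $U$ forces the contributing projections to repeat and be absorbed (via bounded geodesic image and finite complexity) into a nested $\vbig$-domain. The proof is likely a direct application of the hierarchical axis / asymptotic-support machinery that underlies the definition of $\vbig$, so I would either cite it from \cite{durhamhagensisto:boundaries} or establish it by a short induction on the complexity of $\s$ using the bounded geodesic image axiom.

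Granted the lemma, we obtain
\[
n m\tau_0^G - K - KC_0 \;\le\; K\sum_{U\in\vbig(h)}\bigl\lfloor\dist_{\calC U}(1,h^m)\bigr\rfloor_s,
\]
so since $|\vbig(h)|\le N$, pigeonhole produces, for each $m$, some $U_m\in\vbig(h)$ with $\dist_{\calC U_m}(1,h^m)\ge \tfrac{n\tau_0^G}{KN}\,m - C_1$. The set $\vbig(h)$ is finite, so some fixed $U\in\vbig(h)$ achieves this along an infinite subsequence of $m$, yielding $\tau_{\calC U}(h)\ge \tau_0:=\tfrac{n\tau_0^G}{KN}>0$. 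Since $\tau_0^G$, $n$, $K$, and $N$ are uniform in $g$, this is the required uniform lower bound on $\tau_{\calC U}(g^n)$ for some $U\in\vbig(g)$.
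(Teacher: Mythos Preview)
Your outline matches the paper's Steps~1--4 closely: both start from Corollary~\ref{cor:HHG-translation discrete}, feed it into the distance formula, and pigeonhole over $\vbig(g)$. The gap is your ``key lemma''. It is true that each individual domain $V\notin\vbig(h)$ has $\dist_V(1,h^m)\le D(g)$ (Lemma~\ref{lem:Big_lemma}\eqref{item:big-bounded-outside}), and the paper does bound the \emph{total} contribution from domains \emph{orthogonal} to every $U_i\in\vbig(h)$ by the passing-up lemma (the Claim inside Lemma~\ref{lem:proportion}). But for domains $V$ \emph{properly nested} in some $U\in\vbig(h)$, no such bound holds: if one $V\pnest U$ is relevant for $(x,h^kx)$, then since $h$ acts loxodromically on $\calC U$ the translates $h^iV$ are pairwise distinct, and a linear-in-$m$ number of them are relevant for $(x,h^mx)$. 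The passing-up lemma applied to this family only outputs $U$ itself, which is no contradiction. So $\sum_{V\notin\vbig(h)}\lfloor\dist_V(1,h^m)\rfloor_s$ can grow linearly in $m$, and the lemma you would like to cite from \cite{durhamhagensisto:boundaries,durhamhagensisto:correction} is not available.

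What your pigeonhole actually gives---and what the paper records as Lemma~\ref{lem:proportion}---is only $\sum_{V\nest U}\dist_V(x,h^mx)\ge\epsilon m$ for some $U\in\vbig(h)$, with the sum ranging over \emph{all} $V\nest U$, not just $V=U$. Extracting a uniform lower bound on $\dist_U(x,h^mx)$ itself from this is the real content of the theorem, and it is precisely the paper's Case~2 (Lemmas~\ref{lem:V}--\ref{lem:I_bounded}): one chooses a $\nest$--maximal relevant $V\pnest U$, uses its $h$--translates to produce many domains relevant for $(x,h^Jx)$, and then runs passing-up in the other direction to force $\dist_U(x,h^Jx)$ large for a \emph{uniformly bounded} $J$. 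That step is missing from your sketch and cannot be replaced by the boundedness claim you propose.
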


Theorem~\ref{thm:main} does not assert that the action of any particular $\Stab_G(U)$ on $\mathcal CU$ is translation discrete, and indeed one cannot hope for a general statement of this type, as illustrated by the following result. See Section~\ref{sec:strong_18} for definitions.

\begin{theorem}\label{thm:intro-counterexamples}
Let $G$ be an infinite hierarchically hyperbolic group that is either elementary or acylindrically hyperbolic, and let $[\alpha]\in H^2(G,\mathbb Z)$ be representable by a bounded cocycle. The corresponding $\mathbb Z$--central extension $E_\alpha$ of $G$ admits a hierarchically hyperbolic structure $(E_\alpha,\mathfrak S)$ such that the following holds for some $E_\alpha$-invariant $A\in\mathfrak S$. For all $\epsilon>0$, there exists $g\in E_\alpha$ such that $\tau_{\calC A}(g)\in(0,\epsilon)$.
\end{theorem}

The theorem applies, for example, to any central extension of any infinite hyperbolic group $G$, since $H^2_b(G,\mathbb Z)\to H^2(G,\mathbb Z)$ is onto in that case \cite{mineyev:bounded}.  Even $\Z^2$ admits hierarchically hyperbolic structures involving arbitrarily small translation lengths; see Example~\ref{exmp:modified_abelian}. These examples show that Theorem~\ref{thm:main} is sharp.

Theorem~\ref{thm:main} still gives useful control of translation lengths.  For example, it plays a role in recent work of Abbott--Ng--Spriano and Gupta--Petyt on uniform exponential growth in HHGs and cubulated groups with factor systems \cite{abbottngsprianoguptapetyt:hierarchically}.  It is also used in work of Zalloum \cite{Zalloum:effective}, who shows that in any acylindrically hyperbolic, virtually torsion-free HHG, one can find a Morse element (and more generally, a free stable subgroup) uniformly quickly. (Note that not all HHGs are virtually torsionfree \cite{hughes:lattices}.) It is additionally related to a subtle point about \emph{product regions} and their coarse factors, see \cite[\S2]{durhamhagensisto:correction} and \cite[\S15]{casalsruizhagenkazachkov:real}.

\subsection{Outline of the paper}\label{subsec:outline} 

Section~\ref{sec:UU} discusses translation lengths for actions on spaces with barycentres, which extract a key property of injective spaces. Section~\ref{sec:background} covers background on hierarchical hyperbolicity and applies Proposition~\ref{prop:undistorted} to prove uniform undistortion for HHGs. This is the starting point for the proof of Theorem~\ref{thm:main}, which occupies Section~\ref{sec:proof-of-main}. Section~\ref{sec:strong_18} then discusses the sharpness of Theorem~\ref{thm:main} and some well-known groups. Finally, in Section~\ref{sec:questions} we raise some questions related to this work.

\medskip\textbf{Acknowledgements.} We would like to thank Jason Behrstock, Matt Durham, Thomas Ng, Alessandro Sisto,  and Davide Spriano for discussions related to Theorem~\ref{thm:main}.  We are also grateful to Johanna Mangahas, Lee Mosher, Kasra Rafi, Saul Schleimer, Juan Souto, Richard Webb, and MathOverflow user Sam Nead for helpful discussions about translation lengths on annular curve graphs.  We thank David Hume for pointing out the examples in \cite{ArzhantsevaCashenGruberHume:negative}, Alice Kerr for reminding us about \cite{manning:quasiactions}, and Nicolas Vaskou for a discussion about translation lengths in Artin groups.  We are very grateful to the referees for numerous comments that improved the paper.

CA was partially supported by NSF grants DMS-2106906 and DMS-2340341.  HP thanks the organisers of the thematic program in geometric group theory at the CRM in Montreal, where this project was completed.

\section{Barycentres and translation lengths}\label{sec:UU}

\begin{definition}\label{defn:barycentres}
Let us say that a metric space $(X,\dist)$ \emph{has barycentres} if for each $n$ there is a map $b_n\colon X^n\to X$ such that $b_n$ is:
\begin{itemize}
\item   idempotent: for every $x\in X$ we have $b_n(x^n)=x$, where $x^n$ denotes the tuple $(x,\ldots,x)\in X^n$;
\item   symmetric: $b_n$ is invariant under permutation of co-ordinates;
\item   $\isom X$--invariant: for every $g\in\isom X$ we have $gb_n(x_1,\dots,x_n)=b_n(gx_1,\dots,gx_n)$; 
\item   $\frac1n$--Lipschitz: $\dist\big(b_n(x_1,\dots,x_n),b_n(y_1,\dots,y_n)\big)\,\le\,\frac1n\sum_{i=1}^n\dist(x_i,y_i)$.
\end{itemize}
\end{definition}

In fact, it would be natural to assume something \emph{a priori} rather stronger than idempotence, namely that the barycentre of a repeated tuple agrees with the barycentre of the tuple, in the sense of Remark~\ref{rem:barycentre_multiples}. However, we do not explicitly need this in our arguments, and when $X$ is complete it is actually a consequence of the above definition, as explained in the aforementioned remark.

\begin{definition} \label{def:bicombing}
Let $X$ be a metric space. A \emph{bicombing} $\sigma$ on $X$ is a choice of path $\sigma_{xy}=\sigma(x,y)$ from $x$ to $y$ for every $x,y\in X$. 
\begin{itemize}
\item   If every $\sigma(x,y)$ is a geodesic, then $\sigma$ is a \emph{geodesic bicombing}.
\item   If $\sigma(x,y)=\sigma(y,x)$ for all $x,y\in X$, then $\sigma$ is \emph{reversible}.
\item   We say that a geodesic bicombing $\sigma$ is \emph{conical} if the following holds for all $x,y,x',y'\in X$:
\[
\dist(\sigma_{xy}(t),\sigma_{x'y'}(t)) \,\le\, (1-t)\dist(x,x')+t\dist(y,y').
\]
\end{itemize} 
\end{definition}

Building on work of Es-Sahib--Heinich and Navas \cite{essahibheinich:barycentre,navas:L1}, Descombes showed that every complete metric space with a reversible, conical bicombing (and every proper space with a conical bicombing) that is $\isom$--invariant has barycentres \cite[Thm~2.1]{descombes:asymptotic}. This includes many examples of interest, including all \emph{CUB spaces} \cite{haettel:link}, examples of which are produced in \cite{haettelhodapetyt:lp}. Most importantly for the purposes of this paper, it includes all \emph{injective metric spaces} by work of Lang \cite[Prop.~3.8]{lang:injective}, as mentioned in the introduction. However, there is a more direct way to see that injective spaces have barycentres \cite[\S7.2, \S7.4]{petyt:onlarge}, as we now describe. 

\begin{definition}\label{defn:injective-space}
A metric space $X$ is \textit{injective} if for every metric space $B$ and every subset $A\subseteq B$, if $f\colon A\to X$ is 1--Lipschitz then there exists a 1--Lipschitz map $\hat f\colon B\to X$ with $\hat f|_A=f$.
\end{definition}

For example, given a metric space $Y$, let $\R^Y=\{Y\to\R\}$, and for $f,g\in\R^Y$ let $\dist_\infty(f,g)=\sup_{y\in Y}|f(y)-g(y)|$. Every component of the extended metric space $(\R^Y,\dist_\infty)$ is injective. 

\begin{lemma} \label{lem:injective_barycentres}
Injective metric spaces have barycentres.
\end{lemma}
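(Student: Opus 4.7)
The plan is to use the extension property in the definition of injectivity, applied to a natural diagonal embedding of $X$ into its $n$-fold product equipped with an appropriate metric.

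Concretely, equip $X^n$ with the scaled $\ell^1$-metric
\[
d_n\bigl((x_1,\dots,x_n),(y_1,\dots,y_n)\bigr) \;=\; \tfrac{1}{n}\sum_{i=1}^n \dist(x_i,y_i),
\]
and let $\Delta\colon X\to X^n$ be the diagonal $x\mapsto(x,\dots,x)$. A direct calculation shows that $\Delta$ is an isometric embedding into $(X^n,d_n)$, so the map $\Delta^{-1}\colon \Delta(X)\to X$ is $1$-Lipschitz. Injectivity of $X$ then yields a $1$-Lipschitz extension $b_n\colon (X^n,d_n)\to X$. Unwinding definitions, $b_n\circ\Delta=\id_X$ is exactly idempotence, and the $1$-Lipschitz property with respect to $d_n$ is precisely the $\frac{1}{n}$-Lipschitz condition in Definition~\ref{defn:barycentres}.

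Since the extension granted by injectivity is not canonical, the $b_n$ so produced need not be symmetric or $\isom X$-equivariant. For symmetry, one replaces $(X^n,d_n)$ by its quotient under the permutation action of the symmetric group $\mathrm{Sym}(n)$: this finite group acts by $d_n$-isometries, so the quotient carries a genuine metric $\bar d_n$, the diagonal still embeds isometrically (its image consists of fixed points), and injectivity applied to $X\hookrightarrow(X^n/\mathrm{Sym}(n),\bar d_n)$ produces a $1$-Lipschitz extension whose pullback under the quotient map $X^n\to X^n/\mathrm{Sym}(n)$ is $\mathrm{Sym}(n)$-invariant by construction, while retaining idempotence and the Lipschitz estimate.

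The main obstacle is obtaining $\isom X$-equivariance, because $\isom X$ need not be finite, so one cannot simply quotient $X^n$ by its diagonal action. The cleanest route is to appeal to \cite[Prop.~3.8]{lang:injective}, which equips any injective metric space with a canonical $\isom X$-invariant reversible conical bicombing, and then invoke \cite[Thm~2.1]{descombes:asymptotic} to produce barycentres satisfying all four required properties at once. This is the bicombing-based route mentioned in the text preceding the lemma; the direct injectivity argument above already handles every property apart from $\isom X$-invariance, for which Lang's canonical choice of bicombing is the key ingredient.
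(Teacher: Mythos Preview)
Your direct construction via the diagonal $X\hookrightarrow (X^n,d_n)$ and its symmetric refinement $X\hookrightarrow X^n/\mathrm{Sym}(n)$ is correct and cleanly yields idempotence, the $\tfrac1n$--Lipschitz bound, and symmetry. But, as you acknowledge, it does not give $\isom X$--equivariance, and your remedy is to invoke Lang's bicombing together with Descombes's barycentre theorem. That is a valid proof, but it renders the preceding direct argument redundant: Lang plus Descombes already delivers all four properties simultaneously, and this is precisely the bicombing route the paper mentions (and sets aside) in the paragraph preceding the lemma.

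The paper's own proof is genuinely different and self-contained. It embeds $X$ isometrically into $\R^X$ via $x\mapsto\dist(x,\cdot)$, observes that this image equals the tight span $T_X$ because $X$ is injective, and then defines $b_n(x_1,\dots,x_n)=p\bigl(\tfrac1n\sum_i\dist(x_i,\cdot)\bigr)$, where $p\colon P_X\to T_X$ is Dress's explicit $1$--Lipschitz retraction. The point is that the affine average lands in $P_X$ and that $p$ is given by a canonical formula, so $\isom X$--equivariance is automatic rather than something that must be imposed afterwards. This avoids any appeal to bicombings. What your approach buys is conceptual simplicity for three of the four axioms; what the paper's approach buys is a single construction that handles equivariance for free.
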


\begin{proof}
Let $(X,\dist)$ be an injective space. The map $x\mapsto\dist(x,\cdot)$ defines an isometric embedding of $X$ into $\R^X$, so we can view $X$ as a subset.  We now recall a construction from \cite[\S1]{dress:trees}.  First, let $P_X\subseteq \R^X$ be the set of functions $f$ such that $f(x)+f(y)\geq \dist(x,y)$ for all $x,y\in X$.  Observe that the map $(\R^X)^n\to\R^X$ defined by $(f_1,\ldots,f_n)\mapsto \frac{1}{n}\sum_{i=1}^nf_i$ sends tuples of functions in $P_X$ to functions in $P_X$.  

Let $T_X\subseteq P_X$ be the set of $f$ such that 
$$f(x)=\sup_{y\in X}\{\dist(x,y)-f(y)\}$$
for all $x\in X$. Observe that $X$, regarded as above as a subset of $\R^X$, is contained in $T_X$. Indeed, for each $z\in X$ we have $\dist(z,x)=\sup_{y\in X}\{\dist(x,y)-\dist(z,y)\}$ by the triangle inequality. On the other hand, as noted in \cite{dress:trees}, injectivity of $X$ implies that $X\to T_X$ is surjective, and we can identify $X$ with $T_X$.  

Each isometry $\Psi\colon X\to X$ extends to a linear isomorphism $f\mapsto f\Psi^{-1}$ which is an isometry on each component of $\R^X$ and which preserves $P_X$ and $T_X$.  Dress defines a $1$--Lipschitz retraction $p\colon P_X\to T_X=X$ in \cite[\S1.9]{dress:trees} which, by construction, is $\isom(X)$--equivariant (see also \cite[Prop. 3.7.(2)]{lang:injective}).  (From Definition~\ref{defn:injective-space}, one could construct a $1$--Lipschitz retraction $\R^X\to X$ directly, but we use the map $p$ to ensure equivariance.)

Given points $x_1,\dots,x_n$ in $X$, consider their affine barycentre $\frac1n\sum_{i=1}^n\dist(x_i,\cdot)\in P_X$. The maps defined by $b_n\colon(x_1,\dots,x_n) \mapsto p\big(\frac1n\sum_{i=1}^n\dist(x_i,\cdot)\big)$ satisfy the requirements of Definition~\ref{defn:barycentres}, and hence provide barycentres for $X$. 
\end{proof}

\begin{lemma} \label{lem:geodesic}
If $X$ has barycentres, then every pair of points is joined by an isometric image of a dense subset of an interval. If $X$ is complete, then it has an $\isom X$--invariant, reversible, conical bicombing; and every space with an $\isom$--invariant conical bicombing has barycentres.
\end{lemma}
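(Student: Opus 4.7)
The plan is to prove the three assertions in turn, all building on straightforward manipulations of the barycentre/bicombing axioms, with the converse direction being the deepest.

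For the first claim, my first step is to verify that $b_2(x,y)$ is an exact midpoint of $x$ and $y$. Idempotence gives $b_2(x,x)=x$, so the $\frac12$-Lipschitz property yields $\dist(x,b_2(x,y))=\dist(b_2(x,x),b_2(x,y))\leq\frac12(\dist(x,x)+\dist(x,y))=\frac12\dist(x,y)$, and symmetrically for $y$; the triangle inequality then forces equality in both. Iterating, I would define a map $\gamma$ on the dyadic rationals in $[0,1]$ by setting $\gamma(0)=x$, $\gamma(1)=y$, and recursively $\gamma\big(\tfrac{2k+1}{2^{n+1}}\big):=b_2\big(\gamma(\tfrac{k}{2^n}),\gamma(\tfrac{k+1}{2^n})\big)$. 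An induction on $n$, applying the midpoint property to successive dyadic neighbours, shows that $\dist(\gamma(s),\gamma(t))=|s-t|\dist(x,y)$ for all such dyadics $s,t$, giving the claimed isometric embedding of a dense subset of $[0,\dist(x,y)]$.

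For the second claim, completeness of $X$ together with uniform continuity of $\gamma$ allows a unique continuous extension to an isometric embedding $\sigma_{xy}\colon[0,\dist(x,y)]\to X$. I then verify the three properties of the resulting geodesic bicombing $\sigma$: (i) $\isom X$-invariance transfers from the corresponding property of $b_2$ at dyadic parameters and passes to the limit; (ii) symmetry $b_2(x,y)=b_2(y,x)$ gives $\sigma_{xy}(1/2)=\sigma_{yx}(1/2)$, which by the same dyadic recursion and continuity yields full reversibility $\sigma_{xy}(t)=\sigma_{yx}(\dist(x,y)-t)$; (iii) for conicality, the $\frac12$-Lipschitz estimate $\dist(b_2(x,y),b_2(x',y'))\leq\frac12(\dist(x,x')+\dist(y,y'))$ is exactly the conical inequality at $t=1/2$, and an induction on dyadic depth (averaging the bounds on the two adjacent dyadics) propagates it to all dyadic $t\in[0,1]$, with continuity completing the argument.

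For the converse, given a conical, isometry-invariant geodesic bicombing $\sigma$, the natural candidate for $b_2$ is $b_2(x,y):=\sigma_{xy}(1/2)$: idempotence is immediate since $\sigma_{xx}$ is constant, the $\frac12$-Lipschitz property is the conical inequality at $t=1/2$, and $\isom X$-invariance and symmetry (via reversibility) follow from the corresponding properties of $\sigma$. For $n\geq 3$, I would invoke the iterated averaging construction of Es-Sahib--Heinich and Navas \cite{essahibheinich:barycentre,navas:L1} (see also \cite[Thm~2.1]{descombes:asymptotic}), which takes a conical bicombing and produces symmetric, equivariant, $\frac1n$-Lipschitz barycentres, using conicality to control the iterates. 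This last step is the main obstacle: constructing $b_n$ for $n\geq 3$ is not a direct midpoint argument and relies on the nontrivial convergence analysis in the cited works, whereas the first two parts are elementary propagation of the axioms via dyadic subdivision and continuity.
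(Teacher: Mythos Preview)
Your argument is essentially the same as the paper's: both use the midpoint property of $b_2$ and dyadic iteration for the first claim, pass to limits via completeness and verify the bicombing axioms for the second, and cite \cite[Thm~2.1]{descombes:asymptotic} for the converse. Your version simply fills in more detail.

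One small wrinkle in your converse argument: you propose $b_2(x,y):=\sigma_{xy}(1/2)$ and justify symmetry ``via reversibility'', but the hypothesis in that clause is only that the bicombing is $\isom$--invariant and conical, not reversible, so $\sigma_{xy}(1/2)=\sigma_{yx}(1/2)$ is not immediate. This is not fatal, since the Es-Sahib--Heinich--Navas--Descombes construction you cite for $n\ge3$ in fact produces symmetric barycentres for all $n$ (including $n=2$) from a conical bicombing alone; the paper accordingly just cites that theorem outright rather than treating $b_2$ separately.
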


In our applications, we only need the first assertion in the above lemma, and have included the statements about bicombings only since they may be of independent interest.  

\begin{proof}[Proof of Lemma~\ref{lem:geodesic}]
Given $x,y\in X$, the point $b_2(x,y)$ has $\dist(x,b_2(x,y))=\dist(y,b_2(x,y))=\frac12\dist(x,y)$. Iterating, we get an isometrically embedded dyadic interval from $x$ to $y$, as in the first assertion. If $X$ is complete, then we get a uniquely defined geodesic from $x$ to $y$ by taking limits of the dyadic interval. By the properties of barycentres, these geodesics form a reversible, $\isom X$--invariant, conical bicombing. The fact that every space with an $\isom$--invariant conical bicombing has barycentres is \cite[Thm~2.1]{descombes:asymptotic}.
\end{proof}

\begin{remark} \label{rem:barycentre_multiples}
If $X$ is complete, then Lemma~\ref{lem:geodesic} implies that it has a reversible conical bicombing. According to \cite[Prop.~2.4]{descombes:asymptotic}, the barycentres can then be perturbed so that they additionally satisfy $b_{nm}(x_1^m,\dots,x_n^m)=b_n(x_1,\dots,x_n)$ for every $n,m,x_1,\dots,x_n$, where $z^m$ denotes the tuple $(z,\dots,z)\in X^m$. We could therefore have assumed this stronger property to begin with in most situations. Moreover, observe that the barycentres on $X$ naturally extend to its metric completion.
\end{remark}

Throughout this paper, we will assume that all actions of a group on a metric space are by isometries.  Recall that for a group $G$ acting on a metric space $(X,\dist)$ and an element $g\in G$, the stable translation length is denoted $\tau_X(g)=\lim_{n\to\infty}\frac1n\dist(x,g^nx)$, which is independent of $x$. We also write $|g|=\inf\{\dist(x,gx)\,:\,x\in X\}$. Observe that, by repeatedly applying the triangle inequality, we always have $\tau_X(g) \leq |g|$. The following was noted for injective spaces in \cite[Rem.~7.25]{petyt:onlarge}. We provide a proof for completeness.

\begin{lemma} \label{lem:stable_unstable}
If $G$ acts on a metric space $X$ with barycentres, then $|g|=\tau_X(g)$ for all $g\in G$.
\end{lemma}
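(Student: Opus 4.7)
The plan is to prove the nontrivial direction $|g|\le\tau_X(g)$, since the reverse inequality $\tau_X(g)\le|g|$ is immediate from the definition of stable translation length and the triangle inequality. The strategy is to exhibit, for each $n$, a point $z_n\in X$ with $\dist(z_n,gz_n)\le\frac1n\dist(x_0,g^nx_0)$ for a fixed basepoint $x_0$; letting $n\to\infty$ then yields $|g|\le\tau_X(g)$.

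The natural candidate is the barycentre of an orbit segment. Fix $x_0\in X$ and set
\[
    z_n\,=\,b_n\bigl(x_0,\,gx_0,\,g^2x_0,\,\dots,\,g^{n-1}x_0\bigr).
\]
By $\isom X$--invariance of $b_n$, we have $gz_n=b_n(gx_0,g^2x_0,\dots,g^nx_0)$. Now comes the key manoeuvre, which exploits symmetry of $b_n$: reorder the tuple defining $gz_n$ so that the common entries $gx_0,\dots,g^{n-1}x_0$ sit in the same positions as they do in the tuple defining $z_n$, and compare $x_0$ (in $z_n$) against $g^nx_0$ (the only new entry of $gz_n$). Applying the $\frac1n$--Lipschitz property to these two paired tuples, all but one of the $n$ summands vanish, leaving
\[
    \dist(z_n,gz_n)\,\le\,\tfrac1n\dist(x_0,g^nx_0).
\]

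Since $|g|\le\dist(z_n,gz_n)$ by definition of $|g|$, dividing by $n$ and passing to the limit as $n\to\infty$ gives $|g|\le\tau_X(g)$, completing the proof. The only real content is the idea of pairing up orbit points asymmetrically using the symmetry axiom for barycentres, so that the Lipschitz estimate collapses to a single term; once that pairing is spotted, the rest is bookkeeping and no completeness of $X$ is required.
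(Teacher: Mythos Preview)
Your proof is correct and essentially identical to the paper's: both take $z_n=b_n(x_0,gx_0,\dots,g^{n-1}x_0)$, use $\isom X$--invariance and symmetry to pair the tuples for $z_n$ and $gz_n$ so that only the entries $x_0$ and $g^nx_0$ differ, and conclude via the $\tfrac1n$--Lipschitz property. The only cosmetic slip is the phrase ``dividing by $n$'' in your last paragraph, since the factor $\tfrac1n$ is already present from the Lipschitz estimate; you simply pass to the limit.
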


\begin{proof}
Fix $x\in X$, and let $x_n=b_n(x,gx,\dots,g^{n-1}x)$. We compute
\begin{align*}
\dist(x_n,gx_n) \,&=\, \dist\Big(b_n(x,gx,\dots,g^{n-1}x),\,b_n(gx,g^2x,\dots,g^nx)\Big) \\
    &=\, \dist\Big(b_n(x,gx,\dots,g^{n-1}x),\,b_n(g^nx,gx,\dots,g^{n-1}x)\Big) 
    \,\le\, \frac1n\dist(x_n,g^nx_n).
\end{align*}
Hence $\tau_X(g)\le|g|\le\dist(x_n,gx_n)\to\tau_X(g)$.
\end{proof}

\begin{definition}\label{defn:acyl}
Let $G$ be a group acting on a metric space $X$. 
The action is said to be \emph{ proper} if for every $\eps>0$ there exists $N$ such that
\[
|\{g\in G\,:\,\dist(x,gx)\le\eps\}|< \infty
\]
for all $x\in X$.
The action is \emph{uniformly proper} if for every $\eps>0$ there exists $N$ such that
\[
|\{g\in G\,:\,\dist(x,gx)\le\eps\}|\le N
\]
for all $x\in X$.
The action is  \emph{acylindrical} if for every $\eps>0$ there exist $R,N$ such that if $\dist(x,y)>R$, then
\[
|\{g\in G\,:\,\dist(x,gx),\dist(y,gy)\le\eps\}|\le N.
\]
\end{definition}

Uniform properness implies acylindricity, and proper cobounded actions are uniformly proper.  The proof of the first part of the following proposition is slightly simpler than Bowditch's proof for acylindrical actions on hyperbolic graphs \cite{bowditch:tight}, and also recovers it because hyperbolic graphs are coarsely dense in their injective hulls \cite{lang:injective}.

\begin{proposition} \label{prop:undistorted}
Let $G$ act on a metric space $X$ with barycentres. If the action is:
\begin{itemize}
\item   \emph{acylindrical}, then there exists $\delta>0$ such that $\tau_X(g)>\delta$ for every $g\in G$ whose action is not elliptic;
\item   \emph{uniformly proper}, then $\tau_X(g)>\delta$ for every infinite-order $g\in G$;
\item   \emph{proper and cobounded}, then $G$ has finitely many conjugacy classes of finite subgroups.
\end{itemize}
\end{proposition}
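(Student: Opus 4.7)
The first two bullets share a single pigeonhole argument, and the third is a short independent observation. All three make essential use of the barycentre machinery, via Lemma~\ref{lem:stable_unstable} in the first two cases and via the symmetry and equivariance of $b_n$ in the third.

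For the first bullet, fix $\epsilon_0>0$ and let $R,N$ be the constants supplied by acylindricity. Consider $g\in G$ whose action is not elliptic; then the orbit $\langle g\rangle x$ is unbounded for every $x$, and in particular $g$ has infinite order, so its powers are distinct. By Lemma~\ref{lem:stable_unstable}, $\dist(x,gx)$ can be brought as close to $\tau_X(g)$ as one wishes by choosing $x$ appropriately, and unboundedness of the orbit supplies an $n$ with $\dist(x,g^nx)>R$; set $y=g^nx$. The key observation is that $g$ acts isometrically, so $\dist(y,g^ky)=\dist(x,g^kx)\le|k|\dist(x,gx)$, and hence the $2K+1$ powers $g^{-K},\dots,g^K$ move both $x$ and $y$ by at most $K\dist(x,gx)$. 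Taking $K=\lfloor\epsilon_0/\dist(x,gx)\rfloor$ bounds these displacements by $\epsilon_0$, and as soon as $\dist(x,gx)$ is small enough that $2K+1>N$ this contradicts acylindricity. Tracing constants yields a positive lower bound on $\tau_X(g)$ depending only on $\epsilon_0,R,N$, independent of $g$, which is the desired $\delta$.

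For the second bullet, uniform properness implies acylindricity (as noted in the excerpt), so the previous argument already handles non-elliptic elements. It remains to rule out infinite-order elliptic elements: if some infinite-order $g$ had bounded orbit inside a ball of radius $\eps$ about $x$, then the distinct powers $g^k$ would all satisfy $\dist(x,g^kx)\le 2\eps$, contradicting uniform properness. For the third bullet, the symmetry and equivariance of the barycentres force every finite subgroup $H=\{h_1,\dots,h_k\}\le G$ to fix a point in $X$: for any $y\in X$, the point $b_k(h_1y,\dots,h_ky)$ is $H$-invariant, since left multiplication by any element of $H$ permutes the tuple $(h_1y,\dots,h_ky)$. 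Fix a basepoint $x_0$ and let $D$ be a constant of coboundedness, so that $G\cdot x_0$ is $D$-dense. Any finite $H$ then has a conjugate $gHg^{-1}$ whose fixed point lies within $D$ of $x_0$, forcing the conjugate to sit inside $F:=\{h\in G:\dist(x_0,hx_0)\le 2D\}$. Properness makes $F$ finite, so only finitely many subgroups arise as subsets of $F$, bounding the number of conjugacy classes of finite subgroups.

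The principal delicacy lies in the first bullet: the two constraints---pushing $\dist(x,y)$ above $R$, and keeping enough powers of $g$ with small displacement---are both controlled by the single quantity $\dist(x,gx)$, and must be simultaneously satisfiable. Unboundedness of the orbit, rather than positivity of $\tau_X(g)$, is what makes the first achievable, so the argument treats the cases $\tau_X(g)=0$ and $\tau_X(g)>0$ small on the same footing.
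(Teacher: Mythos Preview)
Your proof is correct and follows essentially the same approach as the paper's: the first bullet is the same pigeonhole argument (the paper just fixes $\epsilon_0=1$ and writes the bound as $\frac{1}{2N}$), the second bullet reduces to the first exactly as you do, and the third bullet is the standard fixed-point-plus-coboundedness argument that the paper leaves as ``a standard argument''. The only cosmetic slip is the direction of conjugation in the third bullet (you want $g^{-1}Hg$ fixing $g^{-1}p$, not $gHg^{-1}$), but the content is right.
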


\begin{proof}
Supposing that the action is acylindrical, let $R$ and $N$ be such that if $\dist(x,y)>R$ then $|\{g\in G\,:\,\dist(x,gx),\dist(y,gy)\le1\}|\le N$. Suppose that $g\in G$ is not elliptic. By Lemma~\ref{lem:stable_unstable} there is some $x\in X$ such that $\dist(x,gx)\le\tau_X(g)+\frac1{2N}$. As $g$ is not elliptic, there is some $n$ such that $\dist(x,g^nx)>R$. If $\tau_X(g)\le\frac1{2N}$, then
\[
\dist(g^nx,g^{n+i}x) \ =\ \dist(x,g^ix) \ \le\ i\left(\tau_X(g)+\frac1{2N}\right) \ \le\ \frac iN
\]
for all $i\ge0$. Considering $i\in\{0,\dots,N\}$ gives a contradiction, proving the first statement.

If the action is proper, then no infinite-order element is elliptic, and if the action is uniformly proper then it is acylindrical.

Finally, suppose the action is proper and cobounded. Let $x\in X$. If $F=\{1,f_2,\dots,f_n\}$ is a finite subgroup of $G$, then $b_n(x,f_2\cdot x,\dots,f_n\cdot x)$ is fixed by $F$. Using that finite subgroups have fixed points, a standard argument shows that $G$ has finitely many conjugacy classes of finite subgroups; see \cite[I.8.5]{bridsonhaefliger:metric}, for instance.
\end{proof}

For a constant $\eps\ge0$ and an element $g$ of a group acting on a metric space $X$ with barycentres, let $M_\eps(g)=\{x\in X\,:\,\dist(x,gx)\le\tau_X(g)+\eps\}$. If $\eps>0$, then this set is nonempty by Lemma~\ref{lem:stable_unstable}. Given a proper cocompact action, $M_0(g)\neq\emptyset$ as well, see \cite[Lem.~4.3]{descombeslang:flats}.

\begin{lemma} \label{lem:minsets}
$M_\eps(g)$ is closed under taking barycentres and is setwise stabilised by the action of the centraliser of~$g$.
\end{lemma}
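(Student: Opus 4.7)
The lemma has two parts, both direct consequences of the properties of barycentres and the fact that $G$ acts by isometries. I will sketch each briefly.

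For closure under barycentres, I would take $x_1,\dots,x_n\in M_\eps(g)$, so that $\dist(x_i,gx_i)\le\tau_X(g)+\eps$ for every $i$, and set $y=b_n(x_1,\dots,x_n)$. Since $g$ acts as an isometry, the $\isom X$--invariance of $b_n$ gives $gy=b_n(gx_1,\dots,gx_n)$. The $\tfrac1n$--Lipschitz property then yields
\[
\dist(y,gy)\,=\,\dist\bigl(b_n(x_1,\dots,x_n),\,b_n(gx_1,\dots,gx_n)\bigr)\,\le\,\frac1n\sum_{i=1}^n\dist(x_i,gx_i)\,\le\,\tau_X(g)+\eps,
\]
so $y\in M_\eps(g)$.

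For stability under the centraliser, I would let $h\in G$ commute with $g$ and $x\in M_\eps(g)$. Since $h$ acts isometrically and $hg=gh$, I get
\[
\dist(hx,g(hx))\,=\,\dist(hx,hgx)\,=\,\dist(x,gx)\,\le\,\tau_X(g)+\eps,
\]
so $hx\in M_\eps(g)$.

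There is no serious obstacle here: both assertions are bookkeeping from Definition~\ref{defn:barycentres} and the definition of the centraliser. The only minor point worth flagging is that the barycentre argument genuinely uses all four listed properties (symmetry is irrelevant, but $\isom X$--invariance, the Lipschitz bound, and the fact that we are feeding in equally many points in both tuples are all used).
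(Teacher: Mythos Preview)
Your proof is correct and follows the same argument as the paper: $\isom X$--invariance to move $g$ inside the barycentre, then the $\frac1n$--Lipschitz bound for the first part, and $hg=gh$ plus isometry of $h$ for the second. Nothing to add.
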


\begin{proof}
If $\{x_1,\dots,x_n\}\subseteq M_\eps(g)$, then
\begin{align*}
\dist\Big(b_n(x_1,\dots,x_n),gb_n(x_1,\dots,x_n)\Big) 
    \,&=\, \dist\Big(b_n(x_1,\dots,x_n),b_n(gx_1,\dots,gx_n)\Big) \\
&\le\, \frac1n\sum_{i=1}^n\dist(x_i,gx_i) \,\le\, \tau_X(g)+\eps.
\end{align*}
If $h$ commutes with $g$ and $x\in M_\eps(g)$, then $\dist(hx,ghx)=\dist(hx,hgx)\le\tau_X(g)+\eps$.
\end{proof}

\begin{definition}
For $q\ge0$, a $q$--\emph{quasiaxis} of an isometry $g$ of a metric space $X$ is a $\langle g\rangle$--invariant subset $A_g\subseteq X$ admitting a $q$--coarsely surjective $(1+q,q)$--quasi-isometry $\R\to A_g$.
\end{definition}

\begin{proposition}\label{prop:quasiaxes}
Let $G$ act on a metric space $X$ with barycentres. For every $\eps>0$, every $g\in G$ with $\tau_X(g)>0$ has a $\eps$--quasiaxis.  

Hence, if $G$ acts on $X$ acylindrically, then for all $\epsilon> 0$, every non-elliptic $g\in G$ has an $\epsilon$--quasiaxis.  If the action is uniformly proper, then this holds for all $g\in G$ of infinite order.
\end{proposition}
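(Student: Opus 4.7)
The plan is to construct the quasi-axis directly via the iterated-barycentre path from Lemma~\ref{lem:geodesic} applied to a near-minimiser of the displacement. Given $\eps > 0$ and $g \in G$ with $\tau := \tau_X(g) > 0$, I would choose $\eps' > 0$ small compared to $\tau\eps$ and, using Lemma~\ref{lem:stable_unstable}, pick $x_0 \in M_{\eps'}(g)$, so that $\ell := \dist(x_0, g x_0) \in [\tau, \tau+\eps']$. Lemma~\ref{lem:geodesic} then supplies an isometric embedding of a dense subset of $[0,\ell]$ into $X$ joining $x_0$ to $g x_0$; translating this embedding by powers of $g$ and concatenating produces a piecewise-isometric map $\phi \colon D \to X$, where $D \subset \R$ is dense and $\phi(n\ell) = g^n x_0$. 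Setting $A_g := \phi(D)$ gives a set that is $\langle g\rangle$-invariant by construction.

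To verify that $\phi$ is an $\eps$-quasi-axis, note that the upper bound $\dist(\phi(s),\phi(t)) \leq |t-s|$ is immediate from the triangle inequality along the concatenated piecewise-isometric path. The lower bound rests on the key inequality $\dist(\phi(m\ell),\phi(n\ell)) = \dist(x_0, g^{n-m} x_0) \geq |n-m|\tau$, valid because the stable translation length is an infimum of $\dist(x, g^k x)/k$. Combined with triangle inequalities against the closest orbit point to each of $s$ and $t$, this yields a bound of the form $\dist(\phi(s),\phi(t)) \geq (\tau/\ell)|t-s| - O(\ell)$, and the choice of $\eps'$ makes the multiplicative constant $1+\eps'/\tau$ smaller than $1+\eps$. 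Coarse surjectivity of $\phi$ onto $A_g$ is immediate from the density of $D$.

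The acylindrical and uniformly proper cases then follow at once from the first assertion together with Proposition~\ref{prop:undistorted}, which supplies a positive lower bound on $\tau_X(g)$ for non-elliptic (respectively infinite-order) elements under these hypotheses.

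The main obstacle, I expect, will be calibrating the parameters so that both the multiplicative constant $1+\eps'/\tau$ and the additive error coming from the joints at orbit points can be simultaneously absorbed into $\eps$; shrinking $\eps'$ easily handles the multiplicative part, but the geometry at the segment boundaries, where the dense isometric embeddings are concatenated, requires a careful choice of parametrisation on $\R$ so that the additive term does not blow up relative to $\eps$.
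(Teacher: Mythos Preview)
Your approach is essentially the paper's: both build $A_g$ as the $\langle g\rangle$--orbit of a (dense) geodesic segment from a near-minimiser $x_0$ to $gx_0$, differing only in that the paper parametrises $\R$ in steps of $\tau$ rather than $\ell$ (which shifts the multiplicative error to the upper bound and makes the lower bound purely additive). Your stated worry is unfounded: the careful triangle-inequality computation for the lower bound gives $\dist(\phi(s),\phi(t))\ge (t-s)-(n{+}1{-}m)(\ell-\tau)\ge (1-\eps'/\ell)(t-s)-2\eps'$, so the additive term is $O(\eps')$, not $O(\ell)$, and is absorbed by any choice $\eps'\le\min(\eps,\tau\eps)/2$.
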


\begin{proof}
As noted in Remark~\ref{rem:barycentre_multiples}, the barycentre maps on $X$ naturally extend to its completion, so there is no loss in assuming that $X$ is complete. Let $g\in G$ satisfy $\tau\coloneqq\tau_X(g)>0$. Let $\eps>0$. We are free to assume that $\eps<\tau$. Let $x\in M_\eps(g)$. Let $I\colon [0,\dist(x,gx)]\to X$ be the unit-speed geodesic from $x$ to $gx$ provided by the proof of Lemma~\ref{lem:geodesic}. Note that a dense subset of $I$ is constructed by repeatedly taking barycentres, so Lemma~\ref{lem:minsets} implies that $I\subset M_\eps(g)$. Hence the subset $A_g=\bigcup_{n\in\Z}g^nI$, which is stabilised by $g$, is also contained in $M_\eps(g)$. It remains to show that $A_g$ is an $\eps$--quasiline.

Given $t\in\R$, write $t=n\tau+r$, where $n\in\Z$ and $r\in[0,\tau)$, and let $f(t)=g^nI(r)$, which is well-defined since $\tau\leq |I|$. This defines a map $f:\R\to A_g$ that is $\eps$--coarsely onto. Let $t_1,t_2\in\R$, and write $t_i=n_i\tau+r_i$ for $i\in\{1,2\}$. If $n_1=n_2$, then by definition we have $\dist(f(t_1),f(t_2))=|t_1-t_2|$. Otherwise, we may assume that $n_1<n_2$, and we compute:
\begin{align*}
\dist(f(t_1),f(t_2)) \,&\le\, \dist(f(t_1),g^{n_1+1}x)+\dist(g^{n_1+1}x,g^{n_2}x)+\dist(g^{n_2}x,f(t_2)) \\
    &\le\, (\tau+\eps-r_1)+(n_2-n_1-1)(\tau+\eps)+r_2 \\
    &=\, r_2-r_1+\tau(n_2-n_1)+\eps(n_2-n_1) \\
    &=\, (t_2-t_1) + \frac\eps\tau((t_2-r_2)-(t_1-r_1)) \\
    &\le\,  |t_2-t_1|+\eps|t_2-t_1|+\eps.
\end{align*}
We similarly obtain a lower bound as follows:
\begin{align*}
\dist(f(t_1),f(t_2)) \,&\ge\, \dist(g^{n_1}x,g^{n_2+1}x)-\dist(g^{n_1}x,f(t_1))-\dist(g^{n_2+1}x,f(t_2)) \\
    &\ge\, (n_2-n_1+1)\tau-r_1-(\tau+\eps-r_2) \\
    &=\, \tau(n_2-n_1)+r_2-r_1-\eps \,=\, |t_2-t_1|-\eps.
\end{align*}
Combining these estimates, we see that $f$ is a $(1+\eps,\eps)$--quasi-isometry. The statement about acylindrical and uniformly proper actions now follows using Proposition~\ref{prop:undistorted}.
\end{proof}

By Lemma~\ref{lem:injective_barycentres}, injective spaces have barycentres in the sense of Definition~\ref{defn:barycentres}.

The same proof shows that if $X$ is complete and $g$ is non-elliptic with $M_0(g)\neq\emptyset$, then $g$ has a geodesic axis. However, in our applications we will not be able to arrange for $M_0$ to be nonempty, because $X$ can fail to be proper. 

We finish this section by partially addressing Question~\ref{question:flats-barycentres}, which asks for a higher-dimensional version of Proposition~\ref{prop:quasiaxes}.

\begin{proposition} \label{prop:uniformly_flat_tori}
Let $G$ act properly coboundedly on a metric space $X$ with barycentres, and let $\eps>0$.  If $H=\langle g_1,\dots,g_n\rangle\cong\Z^n$ is a free abelian subgroup of $G$,  then there is an $H$--equivariant $(1+\eps,\eps)$--coarsely Lipschitz map $(\R^n,\ell^1)\to X$.
\end{proposition}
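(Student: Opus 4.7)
The plan is, given $\eps > 0$, to locate a basepoint $x \in X$ with $\dist(x, g_i x) \le \tau_X(g_i) + \delta$ for all $i$ (with $\delta \ll \eps$); construct a map $F_0$ on the fundamental cell $\prod_i [0, \ell_i]$ for the $H$-action on $\R^n$ (where $\ell_i = \dist(x, g_i x)$) that is nearly $1$-Lipschitz in $\ell^1$ with boundary values compatible with the $H$-action; and then extend $F_0$ to all of $\R^n$ $H$-equivariantly.

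For the basepoint, I would fix $x_0 \in X$ and consider the iterated barycentre $x_N = b_{N^n}(\{h x_0 : h \in H_N\})$ over the cube $H_N = \{g_1^{a_1} \cdots g_n^{a_n} : 0 \le a_i < N\} \subset H$. For each $j$, reorder the tuple defining $g_j x_N$: whenever the $j$-th coordinate of $h$ is at least one, pair $h$ with $g_j^{-1} h$ (giving distance zero between the matched entries $hx_0$ and $g_j \cdot g_j^{-1} h x_0$); pair each of the $N^{n-1}$ remaining elements to $g_j^{N-1} h$ (giving distance $\dist(x_0, g_j^N x_0)$). The symmetry and $\frac{1}{N^n}$--Lipschitz properties of $b_{N^n}$ yield
\[
\dist(x_N, g_j x_N) \;\le\; \frac{\dist(x_0, g_j^N x_0)}{N} \;\xrightarrow{N \to \infty}\; \tau_X(g_j),
\]
so for $N$ sufficiently large $\dist(x_N, g_j x_N) \le \tau_X(g_j) + \delta$ for every $j$; set $x = x_N$.

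Using Lemma~\ref{lem:geodesic} (completing $X$ via Remark~\ref{rem:barycentre_multiples} if needed), let $I_i \colon [0, \ell_i] \to X$ be the unit-speed geodesic from $x$ to $g_i x$. By Lemma~\ref{lem:minsets}, every $I_i$ and its $H$-translates lie in $\bigcap_j M_\delta(g_j)$. I would then construct $F_0 \colon \prod_i [0, \ell_i] \to X$ that is $(1 + O(\delta))$--Lipschitz in $\ell^1$ and satisfies the boundary compatibility $F_0(r_1, \dots, \ell_i, \dots, r_n) = g_i F_0(r_1, \dots, 0, \dots, r_n)$ for each $i$. The natural extension $F(v + \ell_i \hat{e}_i) = g_i F(v)$ then defines a map $\R^n \to X$ with manifestly $H$--invariant image, globally $(1 + O(\delta/\min_j \tau_X(g_j)), O(\delta))$--coarsely Lipschitz; choosing $\delta \le \eps \min_j \tau_X(g_j)$ (and $\delta$ small absolutely) upgrades this to the required $(1+\eps, \eps)$--coarsely Lipschitz map.

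The principal obstacle is the fundamental-cell construction. The naive choice $F_0(r) = b_n(I_1(r_1), \dots, I_n(r_n))$ is even $\frac{1}{n}$--Lipschitz, but its boundary values do not match because $g_i$ need not preserve $I_j$ for $j \ne i$. The fix is to replace each $I_j$ in the defining barycentre by an iterated average over its $\langle g_i \rangle$-orbit for $i \ne j$, making the resulting midpoints nearly $g_i$--invariant; by Lemma~\ref{lem:minsets} these averaged points remain inside $\bigcap_j M_\delta(g_j)$, and the within-cell Lipschitz constant degrades only by $O(\delta)$. This is a face-by-face analogue of the averaging used to produce $x$: rather than one big cube killing displacement globally, one uses smaller cubes to repair every boundary mismatch while preserving the near-optimal within-cell Lipschitz estimate.
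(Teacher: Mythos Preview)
Your overall architecture---find a basepoint in $\bigcap_i M_\delta(g_i)$, build a map on a fundamental cell, extend $H$--equivariantly---matches the paper's, and your basepoint construction via a single barycentre over the cube $H_N$ is correct and in fact slightly slicker than the paper's inductive argument.

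The gap is in your fundamental-cell step. You correctly identify that the naive map $F_0(r)=b_n(I_1(r_1),\dots,I_n(r_n))$ has a boundary mismatch: comparing $F_0(\dots,\ell_i,\dots)$ with $g_iF_0(\dots,0,\dots)$, the discrepancy sits in the entries $I_j(r_j)$ versus $g_iI_j(r_j)$ for $j\ne i$. But your proposed fix---averaging $I_j(r_j)$ over a long $\langle g_i\rangle$--orbit to make it ``nearly $g_i$--invariant''---cannot work. By Proposition~\ref{prop:undistorted} (which you implicitly need anyway), every point $y\in X$ satisfies $\dist(y,g_iy)\ge\tau_X(g_i)\ge\delta_0>0$, and orbit-averaging only drives $\dist(\tilde I_j(r_j),g_i\tilde I_j(r_j))$ down towards $\tau_X(g_i)$, not towards $0$. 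So the boundary jump remains of order $\min_j\tau_X(g_j)$ rather than $O(\delta)$, and shrinking $\delta$ does not absorb it into a $(1+\eps,\eps)$ estimate.

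The paper avoids this by building the cell map differently. Instead of a single $n$--fold barycentre, it defines $f$ coordinate by coordinate via iterated $b_2$: once $f(p)$ is constructed, the next coordinate direction is filled in by dyadically subdividing between $f(p)$ and $g_if(p)$. Because $b_2$ is $\isom(X)$--equivariant and $H$ is abelian, applying $g_j$ to the entire construction reproduces the same construction started at $g_jx$; this yields \emph{exact} boundary compatibility $f(p+\tau_ie_i)=g_if(p)$ for free, with no averaging repair needed. The within-cell Lipschitz bound then follows from the conical inequality $\dist(b_2(a,b),b_2(c,d))\le\tfrac12(\dist(a,c)+\dist(b,d))$ propagated along each coordinate direction, combined with the fact that every constructed point lies in $\bigcap_iM_\eps(g_i)$ by Lemma~\ref{lem:minsets}.
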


For $n=1$, the above proposition gives a $\langle g_1\rangle$--equivariant uniformly coarsely Lipschitz axis in $X$, but does not recover the full statement of Proposition~\ref{prop:quasiaxes} because it gives only one of the bounds needed for a quasi-isometric embedding.

\begin{proof}[Proof of Prop.~\ref{prop:uniformly_flat_tori}]
Let $\delta\in(0,1]$ be given by Proposition~\ref{prop:undistorted}, and fix $\eps>0$. 

We first show that $\bigcap_{i=1}^nM_\eps(g_i)\neq\emptyset$, arguing by induction on $n$.  By Lemma~\ref{lem:stable_unstable}, $M_\eps(g_1)\neq\emptyset$.  Fix $j\in\{2,\ldots,n\}$, and assume by induction that there exists $x\in\bigcap_{i=1}^{j-1}M_\eps(g_i)$. By Lemma~\ref{lem:minsets}, for every $m$, the point $y_m=b_m(x,g_jx,\dots,g_j^{m-1}x)$ lies in $\bigcap_{i=1}^{j-1}M_{\eps}(g_i)$, using that $[g_j,g_i]=1$ for all $i$. Moreover, 
\[
\dist(y_m,g_jy_m) \,=\, \dist\Big(b_m(x,g_jx,\dots,g_j^{m-1}x),b_m(g_j^mx,g_jx,\dots,g_j^{m-1}x)\Big) 
\,\le\, \frac1m\dist(x,g_j^mx).
\]
Thus $y_m\in\bigcap_{i=1}^jM_{\eps}(g_i)$ for sufficiently large $m$. So $\bigcap_{i=1}^nM_{\eps}(g_i)\neq\emptyset$. Fix $x\in\bigcap_{i=1}^nM_{\eps}(g_i)$.  

Next, let $\{e_1,\dots,e_n\}$ be the standard basis of $\R^n$.  For brevity, let $\tau_i=\tau_X(g_i)$ for $1\leq i\leq n$.  Let $D^n\subseteq \R^n$ be the set of vectors of the form $\sum_{i=1}^nr_i\tau_ie_i$, where $r_i$ is a dyadic rational.  Define a map $f\colon D^n\to X$ as follows.

Set $f(0)=x$. For $i\ge1$, suppose that $f$ has been defined on $\left(D^{i-1}\times\{0\}^{n-i+1}\right)\cap\prod_{s=1}^n[0,\tau_s)$. Given an element $p$ thereof, set $f(p+\frac12\tau_ie_i)=b_2(f(p),g_if(p))$. We can define $f(q)$ for every $q\in D^i\times\{0\}^{n-i}\cap\prod_{s=1}^n[0,\tau_s)$ by repeatedly taking barycentres in this way. Inductively, this defines $f$ on $D^n\cap\prod_{s=1}^n[0,\tau_s)$.  Specifically, for any $p\in D^{n-1}\times\{0\}$, the restriction of $f$ to the set of dyadic rationals in $\{p\}\times[0,\tau_n)$ has image an isometrically embedded (dense subset of an) interval of length $\dist(f(p),g_nf(p))$ from $f(p)$ to $g_nf(p)$.

Given $p\in D^n$, we can write
\[
p=(p_i\tau_i + a_i\tau_i)_{i=1}^n,
\]
where each $p_i\in[0,1)$ is a dyadic rational and each $a_i\in\Z$.  We define $[p]=(p_i\tau_i)_{i=1}^n$.  Then we let $f(p)=g_1^{a_1}\cdots g_n^{a_n}f([p])$.  Letting $H$ act on $D^n$ by declaring $g_i$ to be a unit translation by $\tau_ie_i$, observe that $f$ is well-defined and $H$--equivariant by construction.  

We now check that $f$ is coarsely Lipschitz.   For convenience, let $C=D^n\cap\prod_{i=1}^n[0,\tau_i)$, and let $\bar C$ be its closure in $D^n$.  Note that taking this closure \textit{in $D^n$} has the effect of adding the endpoint of $\tau_i$ if it is a dyadic rational, and leaving the interval half-open otherwise.   Note that $C$ contains exactly one point in each $H$--orbit.

We first show that $f$ is $(1+\epsilon/\delta)$--Lipschitz on $\bar C$. If $p,q\in \bar C$, we can write $p=\sum_{i=1}^np_i\tau_ie_i$ and $q=\sum_{i=1}^nq_i\tau_ie_i$.  By construction, 
\begin{eqnarray*}
 \dist(f(p),f(q))&\leq&\sum_{i=1}^n(\tau_i+\eps)|p_i-q_i|=\sum_{i=1}^n(1+\frac{\eps}{\tau_i})\tau_i|p_i-q_i|\\
 &\leq&(1+\frac{\eps}{\delta})\sum_{i=1}^n\tau_i|p_i-q_i|=(1+\frac{\eps}{\delta})\|p-q\|_1.
\end{eqnarray*}
Here we used that $\tau_i\geq \delta>0$ for all $i$.

We next show that  $f$ is $(1+\epsilon/\delta)$--Lipschitz on $D^n$.  Let $p,q\in D^n$ be given.  Let $\gamma$ be an $\ell^1$--metric geodesic in $\R^n$ from $p$ to $q$ such that $\gamma\cap D^n$ is dense in $\gamma$.  Then $\gamma=\gamma_1\cdots\gamma_m$, where each $\gamma_j$ is an $\ell^1$--metric geodesic whose intersection with $D^n$ lies in some $H$--translate of $\bar C$.  Since $f$ is $(1+\eps/\delta)$--Lipschitz on $\bar C$, it follows from  equivariance that $f$ is $(1+\eps/\delta)$--Lipschitz on each $H$--translate of $\bar C$.  Hence, letting $p=p_0,\ldots,p_{m-1}$ be the initial points of $\gamma_1,\ldots,\gamma_m$ and $p_{m}=q$ the terminal point of $\gamma_m$, we have $\dist(f(p_i),f(p_{i+1}))\leq (1+\eps/\delta)\|p_i-p_{i+1}\|_1$ for each $i$. We conclude that
\[\dist(f(p),f(q))\leq\sum_{i=0}^{m-1}\dist(f(p_i),f(p_{i+1}))\leq(1+\eps/\delta)\sum_i|\gamma_i|=(1+\eps/\delta)\|p-q\|_1,\]  
as required. 

Finally, we note that since $D^n$ is dense in $\R^n$, we can extend $f\colon D^n\to X$ equivariantly to a map $f\colon \R^n \to X$ that is $(1+\epsilon/\delta,\epsilon/\delta)$--coarsely Lipschitz.  Since this argument holds for all $\epsilon$, it holds, in particular, for $\epsilon \delta$, which concludes the proof.
%
\end{proof}

One problem with Proposition~\ref{prop:uniformly_flat_tori} is that the map $f$ could, \emph{a priori}, fail to be colipschitz. This is addressed by the following statement, which is similar to \cite[Thm~4.2]{button:aspects}.

\begin{lemma} \label{lem:undistorted_abelian}
Let $H=\langle g_1,\dots,g_n\rangle\cong\Z^n$ be a free abelian group acting on a metric space $X$. For every $T,\delta>0$ there exists $\delta'=\delta'(n,\delta,T)>0$ such that the following holds. If every $h\in H$ has $\tau_X(h)>\delta$ and $\max\{\tau_X(g_i)\}\le T$, then in fact every $h\in H$ has $\tau_X(h)\ge\delta'\dist_H(1,h)$. 
\end{lemma}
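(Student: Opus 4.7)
The plan is to use the abelian structure of $H$ to extend $\tau_X|_H$ to a norm on $\mathbb{R}^n$ and then apply compactness. Since $H$ is abelian, for any $h_1, h_2 \in H$ and $k \in \mathbb{N}$ we have $(h_1h_2)^k = h_1^k h_2^k$, so the triangle inequality yields the subadditivity $\tau_X(h_1h_2) \leq \tau_X(h_1) + \tau_X(h_2)$. Combined with the homogeneity $\tau_X(h^k) = |k|\tau_X(h)$, this means $\tau_X|_H$ extends uniquely to a seminorm on $H \otimes \mathbb{Q} \cong \mathbb{Q}^n$. The hypothesis $\tau_X(g_i) \leq T$ makes this seminorm $T$-Lipschitz with respect to $\ell^1$, so it extends continuously to a seminorm $\bar\tau$ on $\mathbb{R}^n$ with $\bar\tau \leq T\|\cdot\|_1$.

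The crucial step, which I expect to be the main obstacle, is to deduce from $\tau_X > \delta$ on $H \setminus \{0\}$ that $\bar\tau$ is actually a norm. Suppose its kernel $K$ is nontrivial. If $K$ contains a rational direction, then it contains a nonzero integer vector $h$ with $\bar\tau(h) = 0$, contradicting the lower bound. Otherwise $K \cap \mathbb{Z}^n = \{0\}$, so the projection $\pi\colon \mathbb{Z}^n \to \mathbb{R}^n/K$ is injective. Its image is a free abelian group of rank $n$ lying inside $\mathbb{R}^{n-\dim K}$, which cannot be discrete, so there exist nonzero $h_j \in \mathbb{Z}^n$ with $\dist_{\ell^1}(h_j, K) \to 0$. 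For each such $h_j$, picking $k_j \in K$ with $\|h_j - k_j\|_1$ close to this distance gives $\bar\tau(h_j) \leq \bar\tau(k_j) + \bar\tau(h_j - k_j) = \bar\tau(h_j - k_j) \leq T\|h_j - k_j\|_1 \to 0$, again contradicting the uniform lower bound.

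To ensure $\delta'$ depends only on $n, \delta, T$ and not on the particular action, I would argue by compactness on the space $\mathcal{S}$ of seminorms $\sigma$ on $\mathbb{R}^n$ satisfying $\sigma(e_i) \leq T$ and $\sigma(h) \geq \delta$ for all $h \in \mathbb{Z}^n \setminus \{0\}$. Each such $\sigma$ is $T$-Lipschitz by subadditivity, so $\mathcal{S}$ is equicontinuous and pointwise bounded, and the defining constraints pass to pointwise limits; by Arzelà--Ascoli it is compact in the topology of uniform convergence on compacta. By the previous paragraph every element of $\mathcal{S}$ is a norm, so $\sigma \mapsto \inf_{\|v\|_1 = 1}\sigma(v)$ is continuous and strictly positive on the compact set $\mathcal{S}$, producing a lower bound $\delta' = \delta'(n, \delta, T) > 0$. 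Applying this to $\bar\tau$ gives $\tau_X(h) = \bar\tau(h) \geq \delta'\|h\|_1 = \delta'\dist_H(1,h)$, as required.
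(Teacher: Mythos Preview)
Your proof is correct and takes a genuinely different route from the paper's. Both begin by extending $\tau_X|_H$ to a (semi)norm on $\mathbb{R}^n$ via subadditivity and homogeneity; the paper cites \cite{button:aspects} for this step and simply asserts that the extension is a norm. The substantive difference is in how the uniform lower bound $\delta'$ is obtained. The paper uses Dirichlet's simultaneous approximation theorem: given $x$ with $\|x\|_1=1$, one finds $q\le M(n,\delta,T)$ and $p\in\mathbb Z^n$ with $\|qx-p\|_1$ small enough that $N(qx-p)\le\delta/2$, whence $N(qx)>\delta/2$ since $N(p)>\delta$; this yields the explicit $\delta'=\delta/(2M)$. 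You instead separate two concerns: first a qualitative argument that any seminorm in $\mathcal S$ is actually a norm (via the dichotomy on rational versus irrational kernel directions and the non-discreteness of rank--$n$ lattices in $\mathbb R^{<n}$), then a soft compactness argument on $\mathcal S$ to upgrade pointwise positivity of $\sigma\mapsto\min_{\|v\|_1=1}\sigma(v)$ to uniform positivity. The paper's approach is constructive and gives an effective bound; yours is cleaner conceptually and makes transparent why uniformity in $(n,\delta,T)$ holds, at the cost of an explicit $\delta'$.
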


\begin{proof}
In the terminology of \cite[Thm~4.2]{button:aspects}, $\tau_X$ defines a $\Z$--norm on $H$. Consider the group embedding $H\to\R^n$ given by $g_i\mapsto e_i$, where the $e_i$ are the standard basis vectors. The $\Z$--norm $\tau_X$ extends to a norm $N$ on $\R^n$. By linearity, $N(x)\ge r\|x\|_1$ for all $x\in\R^n$, where $r=\inf\{N(z)\,:\,\|z\|_1=1\}$. Let us find a lower bound for $r$.

Let $x\in\R^n$ have $\|x\|_1=1$. By an application of the pigeonhole principle, there is a constant $M=M(n,\delta,T)$, a natural number $q\le M$, and integers $p_1,\dots,p_n$ such that $|qx_i-p_i|\le\frac1{2n}\frac\delta T$ (see, e.g., \cite[Thm~201]{hardywright:introduction}). Following \cite{steprans:characterization}, let $p=(p_1,\dots,p_n)$, so that $\|qx-p\|_1\le\frac12\frac\delta T$. Because $N$ is a norm, every point $z$ with $\|z\|_1=1$ has $N(z)\le\max\{N(e_i)\}\le T$, which shows that $N(qx-p)\le\frac\delta2$. As $\|qx\|_1\ge\|x\|_1=1$, the vector $p$ must be nonzero, hence $N(p)>\delta$, and therefore $N(qx)>\frac\delta2$. We have shown that $r>\frac\delta{2M}$.
If $h\in H$, then $\|h\|_1=\dist_H(1,h)$, so we can compute
\[
\tau_X(h) \,=\, N(h) \,\ge\, r\|h\|_1 \,>\, \frac\delta{2M}\dist_H(1,h). \qedhere
\]
\end{proof}

Proposition~\ref{prop:uniformly_flat_tori} generalises a result of Descombes--Lang for proper spaces with \emph{convex, consistent} geodesic bicombings \cite[Thm~1.2]{descombeslang:flats}, which includes proper injective spaces of finite dimension by \cite{descombeslang:convex}. They prove that if $G$ acts properly cocompactly on such a space $X$ in such a way that the bicombing is $G$--invariant, then every free-abelian subgroup $A<G$ of rank $n$ acts by translations on some subset $Y\subseteq X$ isometric to $(\R^n,N)$, where $N$ is some norm. Although $Y$ is very well controlled, it does not seem clear whether this implies that abelian subgroups of $G$ are uniformly undistorted, because the norm $N$ depends on the choice of $A$.

\section{Background on hierarchical hyperbolicity}\label{sec:background}

A \emph{hierarchically hyperbolic structure} on a space $(X,\dist)$ is a package of associated data, which is usually abbreviated to $(X,\s)$. Despite the compact notation, this package holds a  large amount of information, much of which is not directly relevant to our purposes here (though it is all \emph{indirectly} relevant, via the ``distance formula'' below). We therefore summarise the main components of the definition and some basic results needed here. For the detailed definition, see \cite[\S1]{behrstockhagensisto:hierarchically:2}; for a mostly self-contained exposition of the theory, see \cite[Part~2]{casalsruizhagenkazachkov:real}.

Firstly, $\s$ denotes the \emph{index set}, whose elements are called \emph{domains}. In some of the following statements, we refer to a constant $E\geq 1$, which is part of the data of a hierarchically hyperbolic structure and which is fixed in advance. In particular, in any property of individual domains $V\in\mathfrak S$, the constant $E$ is independent of $V$.

\medskip

\begin{enumerate}
\item   For each domain $W \in \mathfrak S$ there is an associated $E$-hyperbolic geodesic space $\calC W$ and an $E$--coarsely surjective $(E,E)$-coarsely Lipschitz map $\pi_W\colon X \rightarrow \calC W$.

\item   $\s$ has  mutually exclusive relations $\nest$, $\bot$, and $\trans$ satisfying the following.
    \begin{itemize}
    \item   $\nest$ is a partial order called \emph{nesting}. If $\mathfrak S \neq \emptyset$, then $\mathfrak S$ contains a unique $\nest$-maximal element $S$.
    \item   $\bot$ is a symmetric and anti-reflexive relation called \emph{orthogonality}.  If $U\nest V$ and $V\perp W$, then $U\perp W$.

    \item   There exists an integer $c$ called the \emph{complexity} of $X$ such that every $\pnest$--chain has length at most $c$, and every pairwise orthogonal set has cardinality at most $c$.
    \item   $\trans$, called \emph{transversality}, is the complement of $\bot$ and $\nest$.     
        
    \end{itemize}

\item   If $U\sqsubsetneq V$ or $U\pitchfork V$, then there is an associated set $\rho^U_V\subseteq \calC V$ of diameter at most $E$. If $U\pnest V\pnest W$, then $\dist_{\calC W}(\rho^U_W,\rho^V_W)\le E$.
        
\item   If $U \pitchfork V$ and $x \in X$ satisfies $\dist_{\calC U}(\pi_U(x), \rho^V_U)> E$, then $\dist_{\calC V}(\pi_V(x), \rho^U_V)\leq E$.  
\end{enumerate} 

We emphasise that the above list is just a subset of the full definition of a hierarchically hyperbolic structure.

\bsh{Convention}
When $A$ and $B$ are subsets of a metric space $X$, we write $\dist(A,B)=\inf\{\dist(a,b)\,:\,a\in A,\,b\in B\}$.
Note that this does not define a metric on the set of diameter--$\leq E$ sets in $X$ since there is an error of $2E$ in the triangle inequality.  This explains the mysterious appearances of extra multiples of $E$ in our later computations.
\esh

\begin{definition}[Hierarchically hyperbolic group] \label{defn:equivariance} 
A finitely generated group $G$ with word metric $\dist=\dist_G$ is a \emph{hierarchically hyperbolic group} (or \emph{HHG}) if it has a hierarchically hyperbolic structure $(G,\mathfrak S)$ such that the following additional equivariance conditions hold.
\begin{itemize}
\item   $G$ acts on $\mathfrak S$. The action is cofinite and preserves the three relations $\nest$, $\perp$, and $\trans$.
\item   For each $g \in G$ and each $U \in \mathfrak S$, there is an isometry $g\colon \calC U \rightarrow \calC gU$. These isometries satisfy $g\circ h=gh.$
\item   For all $x,g\in G$ and $U\in\mathfrak S$, we have $g \pi_U(x)=\pi_{gU}(gx)$.  Moreover, if $V\in\mathfrak S$ and either $U\trans V$ or $V\pnest U$, then $g \rho^V_U=\rho^{gV}_{gU}$.
\end{itemize}
\end{definition}

We are following the definition given in \cite{petytspriano:unbounded} as it appears to be the most compact, but the notion was originally introduced in \cite{behrstockhagensisto:hierarchically:1,behrstockhagensisto:hierarchically:2}. The original definition was shown to be equivalent to the present, simpler, one in \cite[\S2]{durhamhagensisto:correction}.

Part (4) of a hierarchically hyperbolic structure $(X,\mathfrak S)$ mentioned above is called a \emph{consistency} condition.  A related part of the definition is a ``bounded geodesic image'' axiom, and though we do not use it directly, it combines with consistency to provide the following statement, which will be important for us. It is part of \cite[Prop.~1.11]{behrstockhagensisto:hierarchically:2}. For two points $x,y \in X$, it is standard to simplify notation by using $\dist_U(x,y)$ to denote $\dist_{\calC U}(\pi_U(x), \pi_U(y)),$ and similarly for subsets of $X$. 
\begin{lemma}[Bounded geodesic image]\label{lem:BGIA} 
Let $x,y\in X$, and suppose that $U,V\in\s$ satisfy $V\pnest U$. If there exists a geodesic $\gamma\subseteq \calC U$ from $\pi_U(x)$ to $\pi_U(y)$ such that $\dist_U(\rho^V_U,\gamma)>E$, then $\dist_V(x,y)\le E$.
\end{lemma}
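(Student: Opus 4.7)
The plan is to deduce the lemma directly from the bounded geodesic image (BGI) axiom, which is part of the full HHS definition but not explicitly listed among items (1)--(4) in the excerpt; it is the axiom alluded to in the sentence immediately preceding the lemma. In one standard formulation, BGI states: there is a uniform constant (which we may absorb into $E$) such that if $V \pnest U$ and $\gamma \subset \calC U$ is a geodesic with $\dist_U(\gamma, \rho^V_U) > E$, then the set $\pi_V(\pi_U^{-1}(\gamma))$ has diameter at most $E$ in $\calC V$.

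Given BGI in this form, the lemma is almost immediate. Let $\gamma$ be the geodesic from $\pi_U(x)$ to $\pi_U(y)$ provided by the hypothesis. Then $\pi_U(x), \pi_U(y) \in \gamma$, so $x, y \in \pi_U^{-1}(\gamma)$, and BGI gives $\dist_V(x,y) \leq E$ directly.

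The only subtle point is in reconciling the lemma with the raw BGI axiom, which is where consistency enters (as flagged in the text). In some formulations, BGI controls the variation of $\pi_V$ along the preimage in $G$ of $\gamma$; in others, it controls the variation as one moves along $\gamma$ itself via a coarse section. To pass between these one invokes the consistency condition for nested pairs: since $\pi_U(x)$ lies on $\gamma$ and is thus $E$-far from $\rho^V_U$, the projection $\pi_V(x)$ is forced, up to bounded error, to lie near $\pi_V(z)$ for any $z\in G$ with $\pi_U(z)$ close to $\pi_U(x)$. The main (really only) obstacle is then bookkeeping of constants: one must enlarge the HHS constant $E$ uniformly at the outset to absorb the constants from BGI, consistency, and the coarse Lipschitz/surjectivity bounds on $\pi_U$ in item (1), so that the single symbol $E$ appearing in the conclusion is the same one used throughout the structure. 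There is no deeper geometric content; the statement is essentially a clean contrapositive repackaging of the BGI axiom.
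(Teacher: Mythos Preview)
The paper does not actually prove this lemma; it simply cites it as ``part of \cite[Prop.~1.11]{behrstockhagensisto:hierarchically:2}'', after remarking that the BGI axiom ``combines with consistency to provide the following statement''. Your proposal correctly identifies this combination as the mechanism and gives a reasonable sketch of how the two axioms fit together, so it is consistent with (and more detailed than) what the paper itself provides.
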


\begin{definition}[Relevant domains]\label{defn:relevant_domains}
Let $D\geq 0$ and let $x,y\in X$.  Then $\relevant_D(x,y)$ denotes the collection of all $U\in\s$ with $\dist_U(x,y) \geq D.$    
\end{definition}

Another axiom from \cite[Def.~1.1]{behrstockhagensisto:hierarchically:2} is the ``large link'' axiom, which we also will not use directly, but instead use via the following consequence:

\begin{lemma}[Passing-up Lemma, {\cite[Lem.~2.5]{behrstockhagensisto:hierarchically:2}}] \label{lem:passingup}
For every $C>0$ there is an integer $P(C)$ such that the following holds. Let $U\in \s$ and let $x,y\in X$. If there is a set $\{V_1,\dots, V_{P(C)}\}$ with $V_i\pnest U$ and $\dist_{V_i}(x,y)>E$ for all $i$, then there exists some domain $W\nest U$ such that $V_i\pnest W$ for some $i$ and $\dist_W(x,y)>C$.
\end{lemma}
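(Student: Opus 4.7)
The plan is to induct on the complexity $c$ of the HHG structure, using the \emph{large link axiom} of \cite{behrstockhagensisto:hierarchically:1,behrstockhagensisto:hierarchically:2} (the axiom from which this lemma is extracted). That axiom provides, for each $U\in\s$ and each pair $x,y\in G$, a controlled set of at most $N=N(\dist_U(x,y))$ domains $T_1,\ldots,T_N\pnest U$ such that every $V\pnest U$ with $\dist_V(x,y)\ge E$ satisfies $V\nest T_j$ for some $j$, with $N$ growing linearly in $\dist_U(x,y)$.

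For the inductive step I would split into two cases on $\dist_U(x,y)$. If $\dist_U(x,y)>C$, then $W=U$ and any $V_i$ already satisfy the conclusion, since each $V_i\pnest U=W$ by hypothesis. Otherwise $\dist_U(x,y)\le C$, so the large link axiom gives $N\le N(C)$. Choosing the constants recursively so that $P_c(C)\ge N(C)\bigl(P_{c-1}(C)+1\bigr)$, pigeonhole produces a single $T_j$ that contains (under $\nest$) at least $P_{c-1}(C)+1$ of the $V_i$'s, hence at least $P_{c-1}(C)$ of them properly nested in $T_j$ (possibly losing one in case $T_j$ equals some $V_i$). Since $T_j\pnest U$, the complexity below $T_j$ is strictly smaller, so the induction hypothesis applied with $T_j$ in place of $U$ produces the required $W\nest T_j\pnest U$ with $V_i\pnest W$ for some $i$ and $\dist_W(x,y)>C$.

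The base case $c=0$ is vacuous since no $V\pnest U$ exists. The main obstacle is the bookkeeping: the final constant $P(C)$ must depend only on $C$ and the fixed HHS parameters (not on $U$ or the depth at which the recursion is invoked). The recursion above gives $P(C)=O(N(C)^{c})$, which is uniform in $U$ as required, with an implicit dependence on $E$ and on the linear constants in the large link axiom. A secondary subtlety is the potential collision $V_i=T_j$, which could prevent properly nesting the chosen $V_i$'s inside $T_j$; this costs at most one $V_i$ per large link layer and is absorbed by the extra $+1$ in the recursion.
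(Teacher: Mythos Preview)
The paper does not prove this lemma at all: it is stated with a citation to \cite[Lem.~2.5]{behrstockhagensisto:hierarchically:2} and introduced merely as a consequence of the large link axiom that will be used in place of that axiom. Your sketch is essentially the standard argument from that reference (induction on the $\nest$--level of $U$, using large link plus pigeonhole at each step), and the bookkeeping you describe is correct; the only cosmetic point is that ``complexity $c$'' is a fixed global constant, so the induction variable should really be the level of $U$, with $P_\ell(C)$ taken monotone in $\ell$ so that the pigeonholed container $T_j$ (whose level may drop by more than one) is still covered.
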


One of the most important features of a hierarchically hyperbolic structure is that one has a ``distance formula'' \cite[Thm~4.5]{behrstockhagensisto:hierarchically:2}, which allows one to approximate distances in $X$ using projections to the domains.

\begin{theorem}[Distance formula]\label{thm:DF}
Let $(X,\s)$ be an HHS. There exists $D_0\ge6E$, depending only on the HHS structure, such that the following holds. For every $D\ge D_0$ there exists $A_D$ such that for all $x,y\in X$ we have
\[
\frac1{A_D}\dist_X(x,y)-A_D \,\leq\, \sum_{U\in\relevant_D(x,y)}\dist_U(x,y) \,\leq\, A_D\dist_X(x,y)+A_D.
\]
Moreover, the dependence of $A_D$ on $D$ is entirely determined by the HHG structure.
\end{theorem}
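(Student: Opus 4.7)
The plan is to prove the two inequalities separately, since they require quite different techniques. The upper bound will follow from the coarse Lipschitzness of the projections $\pi_U$ combined with the Passing-up Lemma and the complexity bound; the lower bound is the substantive direction and requires constructing so-called \emph{hierarchy paths} in $G$.

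For the upper bound, the key observation is that each projection is $(E,E)$--coarsely Lipschitz, so $\dist_U(x,y)\le E\dist_G(x,y)+E$ for every $U$. It therefore suffices to bound $|\relevant_D(x,y)|$ linearly in $\dist_G(x,y)$ and then to distribute the total projection mass carefully. I would argue by induction on complexity: given the $\nest$--maximal $S$, if too many domains $V\pnest S$ satisfy $\dist_V(x,y)\ge E$, then Passing-up (Lemma~\ref{lem:passingup}) yields some $W\pnest S$ with $\dist_W(x,y)$ arbitrarily large — but $\dist_S(x,y)\le E\dist_G(x,y)+E$ already, and iterating through the $\pnest$--layers (whose number is bounded by the complexity $c$) produces the desired linear bound on $|\relevant_D(x,y)|$. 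Summing the individual projections and using the Lipschitz bound gives the upper inequality with $A_D$ depending only on $E$, $c$, and $D$.

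For the lower bound, I would invoke the Realisation Theorem for HHSs: the axioms imply that there exists $K=K(E)$ such that any two points of $G$ are joined by a $(K,K)$--quasigeodesic $\gamma$ for which every $\pi_U\circ\gamma$ is an unparametrised $(K,K)$--quasigeodesic in $\calC U$. Given such a $\gamma$, one bounds its length above by $A_D\sum_{U\in\relevant_D(x,y)}\dist_U(x,y)+A_D$ by subdividing its domain according to where it makes progress in each relevant $\calC U$: the bounded geodesic image lemma (Lemma~\ref{lem:BGIA}) together with consistency rules out long subsegments failing to record $D$--scale progress in some domain, so the total length is controlled by the sum of the relevant projections. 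Since $|\gamma|\ge\frac{1}{K}\dist_G(x,y)-K$, the required lower bound follows.

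The main obstacle is the Realisation Theorem itself, which is the heart of the HHS distance formula: one must show that every ``consistent tuple'' of coordinates in $\prod_{U\in\s}\calC U$ is realised, up to uniform error, by a point of $G$. This is a substantial construction using essentially every HHS axiom — in particular consistency, bounded geodesic image, and the complexity bound — and is carried out in full in \cite{behrstockhagensisto:hierarchically:2}. The dependence of $A_D$ on $D$ and the fixed HHG constants can be tracked through that argument, giving the ``moreover'' clause.
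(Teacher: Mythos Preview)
The paper does not prove this theorem: it is quoted as background, with the proof deferred entirely to \cite[Thm~4.5]{behrstockhagensisto:hierarchically:2}. So there is no ``paper's own proof'' to compare against; the authors simply cite the result and use it as a black box.

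That said, your sketch is a reasonable outline of how the argument in \cite{behrstockhagensisto:hierarchically:2} actually runs. A couple of comments. For the upper bound, you do not need to bound $|\relevant_D(x,y)|$ linearly in $\dist_G(x,y)$ via Passing-up; the standard route is to use the \emph{large links} axiom directly to control, for each domain $U$, the number of $\nest$--maximal relevant domains properly nested in $U$ in terms of $\dist_U(x,y)$, and then induct on complexity. Your Passing-up approach can be made to work but is a somewhat roundabout substitute for large links. For the lower bound, you correctly identify hierarchy paths and ultimately the Realisation Theorem as the substantive input, and you correctly note that this is where essentially all the HHS axioms are consumed; this is indeed the content of \cite{behrstockhagensisto:hierarchically:2}, as you say.
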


The axioms in \cite[Def.~1.1]{behrstockhagensisto:hierarchically:2} were chosen largely to enable one to prove Theorem~\ref{thm:DF}. From now on, we will work in the context of HHGs, and hence switch notation from $(X,\mathfrak S)$ to $(G,\mathfrak S)$. We are interested in infinite cyclic subgroups of the HHG $(G,\mathfrak S)$ and how they act on the HHG structure. Accordingly, we recall the following definition from \cite[\S6.1]{durhamhagensisto:boundaries}.

\begin{definition}[Bigsets]\label{defn:bigset}
Let $(G,\mathfrak S)$ be an HHG.  For each $g\in G$, let $\vbig(g)$ be the set of domains $U\in\mathfrak S$ such that $\diam\pi_U(\langle g\rangle)=\infty$.
\end{definition}

For an element $g$ of an HHG $(G,\s)$, the set $\vbig(g)$ is empty if and only if $g$ has finite order \cite[Prop.~6.4]{durhamhagensisto:boundaries}. The following properties are established in \cite[\S6]{durhamhagensisto:boundaries}.

\begin{lemma} \label{lem:Big_lemma} 
Let $(G, \mathfrak S)$ be an HHG. Given $g \in G$, write $\vbig(g)=\{U_i\}_{i\in I}$. 
\begin{enumerate}
\item   $gU_i\in\vbig(g)$ for all $i$.
    
\item \label{item:big-orthogonal} $U_i\perp U_j$ for all $i\ne j$. In particular, $|I|\le c$, where $c$ is the complexity of $\mathfrak S$.
    
\item   For all $i\in I$, we have $g^{c!}U_i=U_i$, and so $\langle g^{c!}\rangle$ acts on each $\calC U_i$ by isometries.

\item \label{item:big-bounded-outside} There exists $D=D(g,\mathfrak S)$ such that $\diam\pi_V(\langle g\rangle)\leq D$ for all $V\not\in\vbig(g)$.
\end{enumerate}

\end{lemma}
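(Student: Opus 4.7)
The plan is to handle items (1)--(4) in order, with (1) and (3) being essentially formal consequences of equivariance while (2) and (4) carry the real content. Part (1) is immediate from the equivariance axiom: the identity $\pi_{gU_i}(g^k) = g\cdot\pi_{U_i}(g^{k-1})$ exhibits $\pi_{gU_i}(\langle g\rangle)$ as the image of $\pi_{U_i}(\langle g\rangle)$ under the isometry $g\colon\calC U_i\to\calC gU_i$, so the two sets have the same infinite diameter.

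For part (2), the central point is that if $U_i\neq U_j$ in $\vbig(g)$ are not orthogonal, then they are related by $\pnest$ or $\pitchfork$, and hence there is a well-defined bounded set $\rho^{U_i}_{U_j}$ or $\rho^{U_j}_{U_i}$ of diameter at most $E$. By equivariance, any element of $G$ that fixes both $U_i$ and $U_j$ stabilises this set. Applied to a suitable power of $g$, this contradicts the fact that $U_j\in\vbig(g)$, because an isometry of a hyperbolic space that stabilises a nonempty bounded set has bounded orbits on the whole space, whereas a power of $g$ fixing $U_j$ must act with unbounded orbit on $\calC U_j$ (since the orbit of $\langle g^n\rangle$ differs from that of $\langle g\rangle$ by a bounded amount under the coarsely Lipschitz projection). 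The missing ingredient is producing a power of $g$ that fixes both $U_i$ and $U_j$, which reduces to showing that the $\langle g\rangle$-orbit of any element of $\vbig(g)$ is finite. The $\pnest$ case yields a $\pnest$-chain $U \pnest gU \pnest g^2U \pnest \cdots$ bounded by the complexity $c$, forcing finiteness; the $\pitchfork$ case can be treated by combining the consistency axiom with $\langle g\rangle$-equivariance of the relevant $\rho$-sets to again contradict unboundedness of a $\langle g\rangle$-orbit in some $\calC U_i$. Once all orbits are finite, a common power of $g$ fixes any given pair, the $\rho$-set argument yields pairwise orthogonality, and the complexity bound on orthogonal sets gives $|I|\le c$.

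Part (3) follows from (2): any permutation of a set of size at most $c$ has order dividing $c!$, so $g^{c!}$ fixes each $U_i$, and equivariance promotes this to an isometric action of $\langle g^{c!}\rangle$ on each $\calC U_i$.

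Part (4) asks for a uniform bound on $\diam\pi_V(\langle g\rangle)$ across $V\notin\vbig(g)$, and will be the hardest step. My plan is a contradiction argument: assume no such $D$ exists, so there are $V_n\notin\vbig(g)$ with $\diam\pi_{V_n}(\langle g\rangle)\to\infty$, realised by pairs $(g^{a_n},g^{b_n})$. After translating using the $\langle g\rangle$-invariance of both $\vbig(g)$ and the function $V\mapsto\diam\pi_V(\langle g\rangle)$ to assume $a_n = 0$, apply the distance formula (Theorem~\ref{thm:DF}) to $(1,g^{b_n})$ together with the passing-up lemma (Lemma~\ref{lem:passingup}) to extract a single domain $W$ with $V_n\pnest W$ on which the $\langle g\rangle$-projection is forced to be unbounded, so $W\in\vbig(g)$. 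Chasing the nesting relation via bounded geodesic image (Lemma~\ref{lem:BGIA}) and applying part (2) should then contradict the choice $V_n\notin\vbig(g)$.
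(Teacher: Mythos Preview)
The paper does not prove this lemma; it simply cites \cite[\S6]{durhamhagensisto:boundaries}. Your proposal is therefore not comparable to a proof in the paper, but it is worth assessing on its own merits.

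Parts (1) and (3) are fine. Part (2), however, has a structural gap. You propose to first show that the $\langle g\rangle$--orbit of each $U\in\vbig(g)$ is finite, and then use a power of $g$ fixing both $U_i$ and $U_j$ to derive a contradiction from the existence of $\rho^{U_i}_{U_j}$. But your case analysis for orbit-finiteness is on the relation between $U$ and $gU$: the $\sqsubsetneq$ case does give a bounded chain, but the $\pitchfork$ case is only hand-waved, and the case $U\perp gU$ is omitted entirely. Knowing that $g^iU\perp g^{i+1}U$ for all $i$ does not yield pairwise orthogonality of the whole orbit, so the complexity bound cannot be applied, and the $\pitchfork$ case has the same difficulty (transversality of consecutive translates tells you nothing about $U$ versus $g^2U$). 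In fact the argument in the cited reference runs in the opposite order: one proves orthogonality of any two distinct elements of $\vbig(g)$ \emph{directly} from consistency and bounded geodesic image, without first knowing orbits are finite; finiteness of $\vbig(g)$ and part (3) then follow immediately from the complexity bound on pairwise-orthogonal sets together with (1).

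Part (4) also has a genuine gap. The passing-up lemma requires many domains $V$ with $\dist_V(x,y)>E$ for a \emph{single fixed} pair $(x,y)$, but your hypothetical sequence $V_n$ comes with varying witnesses $(1,g^{b_n})$. Nothing in your sketch produces many such $V_n$ that are simultaneously relevant for one pair, so Lemma~\ref{lem:passingup} cannot be invoked as you describe. The argument in the cited reference instead leans on the structure already established in (2)--(3): once a uniform power of $g$ fixes the pairwise-orthogonal set $\vbig(g)$, one can analyse how an arbitrary $V\notin\vbig(g)$ sits relative to the $U_i$ and bound $\diam\pi_V(\langle g\rangle)$ case by case.
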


\begin{remark}\label{rem:eyries}
Many of the statements in Lemma~\ref{lem:Big_lemma} hold when $\langle g\rangle$ is replaced by more complicated subgroups of $G$ --- see \cite[\S9]{durhamhagensisto:boundaries} and \cite{petytspriano:unbounded} --- but we will not use this here.    
\end{remark}

We will use the following proposition, which is \cite[Thm~3.1]{durhamhagensisto:correction}: 

\begin{proposition}\label{prop:big-loxodromic}
If $g$ is an infinite-order element of an HHG $(G,\s)$, of complexity $c$, then $g^{c!}$ acts loxodromically on $\calC U$ for all $U\in\vbig(g)$. In particular, $\tau_G(g)>0$.
\end{proposition}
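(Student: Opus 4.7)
The plan is to argue by contradiction via the distance formula (Theorem~\ref{thm:DF}), leveraging the positive translation length of $h = g^{c!}$ on $G$ itself guaranteed by Corollary~\ref{cor:HHG-translation discrete}. First, by Lemma~\ref{lem:Big_lemma}(3), $h$ stabilises each $U \in \vbig(g)$ and thus acts by isometries on the hyperbolic space $\calC U$. Since $\langle h\rangle$ has finite index in $\langle g\rangle$, the two sets $\vbig(h)$ and $\vbig(g)$ coincide, so $\pi_U(\langle h\rangle)$ is unbounded; hyperbolicity of $\calC U$ then forces $h$ to be either parabolic or loxodromic on $\calC U$, and the task is to rule out the parabolic case.

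Suppose for contradiction that $h$ is parabolic on some $\calC U_0$ with $U_0 \in \vbig(g)$, so that $\dist_{U_0}(1,h^n) = o(n)$. The key observation is that by Lemma~\ref{lem:Big_lemma}(4) there is a constant $D_1$ with $\diam \pi_V(\langle g\rangle) \leq D_1$ for every $V \notin \vbig(g)$. Choosing the distance-formula threshold $D > D_1$ forces $\Rel_D(1,h^n) \subseteq \vbig(g)$, since any $V$ with $\dist_V(1,h^n) > D_1$ must be big. Combined with $|\vbig(g)| \leq c$ from Lemma~\ref{lem:Big_lemma}(2), Theorem~\ref{thm:DF} yields
\[
\dist_G(1,h^n) \,\leq\, A_D \sum_{V \in \vbig(g)} \dist_V(1,h^n) + A_D.
\]
Since $\tau_G(h) = c!\,\tau_G(g) > 0$ by Corollary~\ref{cor:HHG-translation discrete}, the left-hand side grows linearly in $n$, so a pigeonhole over the finite set $\vbig(g)$ produces some $V^\star \in \vbig(g)$ with $\dist_{V^\star}(1,h^n) = \Omega(n)$ along an infinite subsequence, making $h$ loxodromic on $\calC V^\star$.

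The main obstacle is that this argument only furnishes loxodromicity on some $V^\star \in \vbig(g)$, which a priori may differ from the original $U_0$ on which parabolicity was assumed; what I actually need is loxodromicity on every $U \in \vbig(g)$. To isolate linear growth onto the specific $U_0$, I would use the pairwise orthogonality from Lemma~\ref{lem:Big_lemma}(2): any $V \pnest U_0$ fails to be orthogonal to $U_0$, hence cannot lie in $\vbig(g)$, and so $\dist_V(1,h^n) \leq D_1$ again. The passing-up lemma (Lemma~\ref{lem:passingup}) combined with bounded geodesic image (Lemma~\ref{lem:BGIA}) then allows one to argue that any projection weight accumulating on sub-domains of $U_0$ must eventually concentrate on $\calC U_0$ itself, converting the sublinear parabolic bound into a contradiction with the linear growth forced above. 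Executing this concentration step rigorously — turning collective linear growth across $\vbig(g)$ into individual per-domain growth on each specific $U$ — is the technical heart of the argument and the place where the full HHG machinery (orthogonality, BGI, and passing-up) genuinely interacts. The final clause $\tau_G(g) > 0$ is then automatic, since loxodromicity on any $\calC U$ together with the Lipschitz projection $\pi_U\colon G \to \calC U$ forces $\dist_G(1,g^n)$ to grow linearly.
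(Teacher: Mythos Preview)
The paper does not give its own proof of this proposition; it is quoted directly as \cite[Thm~3.1]{durhamhagensisto:correction}, with only the one-line observation that the ``in particular'' clause follows because $\pi_U$ is coarsely Lipschitz and $\langle g^{c!}\rangle$--equivariant. The paper then remarks that the argument in \cite{durhamhagensisto:correction} relies essentially on the (non-uniform) constant $D(g,\mathfrak S)$ from Lemma~\ref{lem:Big_lemma}\eqref{item:big-bounded-outside}, and that Proposition~\ref{prop:HHG-uniform-undistortion} (which is your Corollary~\ref{cor:HHG-translation discrete}) is a \emph{subsequent} strengthening, proved independently via injective spaces.

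Your proposal has a genuine gap, which you yourself flag but do not close. The distance-formula plus pigeonhole argument establishes only that $h=g^{c!}$ is loxodromic on $\calC V^\star$ for \emph{some} $V^\star\in\vbig(g)$; it gives no information about the specific $U_0$ on which parabolicity was assumed. Nothing prevents, from the tools you invoke, the linear growth being absorbed entirely by the other pairwise-orthogonal domains $U_j\in\vbig(g)$ with $j\neq 0$, leaving $\dist_{U_0}(1,h^n)$ sublinear. Your sketched ``concentration step'' is misdirected: the observation that every $V\pnest U_0$ lies outside $\vbig(g)$ and hence has bounded projection is correct, but it means there is \emph{no} weight on sub-domains of $U_0$ to pass up. The passing-up lemma and bounded geodesic image transfer information along nesting chains; they say nothing about moving weight from $U_j$ to an \emph{orthogonal} $U_0$, which is where the obstruction actually lives.

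A secondary point: you invoke Corollary~\ref{cor:HHG-translation discrete} as an input. In the paper's logical order this is Proposition~\ref{prop:HHG-uniform-undistortion}, stated after Proposition~\ref{prop:big-loxodromic}; its proof (via barycentres on the injective model) is independent of Proposition~\ref{prop:big-loxodromic}, so there is no circularity. But it does invert the intended direction of the ``in particular'': in the paper, $\tau_G(g)>0$ is a \emph{consequence} of loxodromicity on some $\calC U$, not an ingredient used to establish it.
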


The assertion about $\tau_G(g)$ follows since $\pi_U$ is coarsely Lipschitz and $\langle g^{c!}\rangle$--equivariant.

The proof of Proposition~\ref{prop:big-loxodromic} given in \cite{durhamhagensisto:correction} relies in an essential way on the constants $D(g,\mathfrak S)$ from Lemma~\ref{lem:Big_lemma}.\eqref{item:big-bounded-outside} and cannot be adapted to give a lower bound on either $\tau_U(g^{c!})$ or $\tau_G(g)$ that is independent of $g$. Indeed, we shall see in Section~\ref{sec:strong_18} that there need not be a uniform lower bound on $\tau_U(g^{c!})$ that holds for \emph{all} $U\in\vbig(g)$. On the other hand, the following proposition states that $\tau_G(g)$ \emph{can} be uniformly lower-bounded. This fact, which relies on the results of Section~\ref{sec:UU}, is an important ingredient in establishing Theorem~\ref{thm:main}.

\begin{proposition}[Uniform undistortion in HHGs]\label{prop:HHG-uniform-undistortion}
Let $(G,\mathfrak S)$ be an HHG.  There exists $\tau_0>0$ such that $\tau_G(g)\geq\tau_0$ for every infinite-order $g\in G$.  Hence there exists $K=K(G,\mathfrak S)$ such that for all infinite-order $g\in G$ and all $x\in G$, we have $\dist_G(x,g^nx)>Kn$ for all $n\geq0$.
\end{proposition}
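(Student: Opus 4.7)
The plan is to reduce the statement directly to the barycentre machinery of Section~\ref{sec:UU}, using as input the theorem of Haettel--Hoda--Petyt \cite{haettelhodapetyt:coarse} that every HHG admits a proper cobounded action by isometries on some injective metric space $X$. The hierarchically hyperbolic structure itself plays no further role in the argument beyond providing this external action; all the work has already been done in Section~\ref{sec:UU} and in \cite{haettelhodapetyt:coarse}.

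In detail: Lemma~\ref{lem:injective_barycentres} endows $X$ with barycentres, and as noted just after Definition~\ref{defn:acyl} a proper cobounded action is automatically uniformly proper (any $x\in X$ lies within the cobounded constant of a $G$-orbit, so conjugation sends the small-displacement set at $x$ injectively into the corresponding set at a fixed basepoint, which is finite by properness). The uniformly proper case of Proposition~\ref{prop:undistorted} then produces a constant $\delta>0$ such that $\tau_X(g)>\delta$ for every infinite-order $g\in G$.

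To transfer this lower bound to the Cayley graph of $G$, I would invoke the Milnor--\v{S}varc lemma: an orbit map $G\to X$ is a $G$-equivariant $(L,L)$-quasi-isometry for some $L\ge1$. Applied to the sequence $(g^n)_{n\ge 0}$, the quasi-isometry inequalities give, after dividing by $n$ and passing to the limit,
\[
\tfrac{1}{L}\tau_G(g)\,\le\,\tau_X(g)\,\le\,L\tau_G(g),
\]
so $\tau_G(g)\ge\delta/L=:\tau_0$ for every infinite-order $g\in G$, which is the first assertion.

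The ``Hence'' clause is then a soft consequence of subadditivity: for any $x\in G$, the sequence $n\mapsto\dist_G(x,g^nx)=|x^{-1}g^nx|_G$ is subadditive by the triangle inequality and left-invariance of the word metric, so Fekete's lemma gives $\tau_G(x^{-1}gx)=\inf_{n\ge1}\tfrac{1}{n}\dist_G(x,g^nx)$. Since $x^{-1}gx$ is infinite-order whenever $g$ is, and translation length is conjugation-invariant, the first part applied to $x^{-1}gx$ yields $\dist_G(x,g^nx)\ge n\tau_0$ for every $n\ge 1$, so one may take any $K\in(0,\tau_0)$. There is no real obstacle to overcome here; the only remark worth making is that the uniformity of $\tau_0$ across all infinite-order elements comes for free, because it was built in at the level of the injective model via Proposition~\ref{prop:undistorted}.
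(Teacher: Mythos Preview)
Your argument is correct and follows the paper's own proof essentially step for step: invoke \cite{haettelhodapetyt:coarse} for the proper cobounded action on an injective space, apply Lemma~\ref{lem:injective_barycentres} and the uniformly-proper case of Proposition~\ref{prop:undistorted} to get a uniform $\delta>0$ for $\tau_X$, and transfer this to $\tau_G$ via the orbit-map quasi-isometry. Your derivation of the ``Hence'' clause via subadditivity and conjugation-invariance is the same observation the paper uses (phrased there as $\tau_G(g^n)\le\dist_G(x,g^nx)$ together with $\tau_G(g^n)=n\tau_G(g)$), just unpacked slightly more explicitly.
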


\begin{proof}
By \cite[Cor.~3.8, Lem.~3.10]{haettelhodapetyt:coarse}, there is a proper, cobounded action of $G$ on an injective metric space $X$.  Fix a basepoint $x_0\in X$ and a constant $\mu\geq 1$ such that the orbit map $G\to X$ given by $h\mapsto hx_0$ is a $(\mu,\mu)$--quasi-isometry.

By Lemma~\ref{lem:injective_barycentres}, $X$ has barycentres.  Since the action of $G$ on $X$ is proper and cobounded, it is uniformly proper.  Hence Proposition~\ref{prop:undistorted} provides a constant $\delta>0$ such that $\tau_X(g)\geq \delta$ for all infinite-order $g\in G$.  A computation shows $\tau_G(g)\geq\frac\delta\mu$.  Recalling that $\tau_G(g^n)\leq \dist(x,g^nx)$ for all $n\geq 0$ and $x\in G$, and that $\tau_G(g^n)=n\tau_G(g)$, we have $\dist_G(x,g^nx)\geq n\frac\delta\mu$.  Taking $K=\frac\delta{2\mu}$ completes the proof. 
\end{proof}

\section{Proof of Theorem~\ref{thm:main}}\label{sec:proof-of-main}

Let $(G,\s)$ be a hierarchically hyperbolic group and $g\in G$ an infinite order element, with $\vbig(g)=\{U_1,\dots, U_m\}$. By Lemma~\ref{lem:Big_lemma}, replacing $g$ by $g^{c!}$, we can and shall assume that $gU_i=U_i$ for all $i$. Independently of $g$, we  bound  $\tau_{U_i}(g)$ below for some $i$.

Our strategy is as follows.  First, we carefully construct a uniform quality quasi-axis for $g$ in each $U_i$ and a point $x\in G$ whose projection to each $\mathcal C U_i$ lies on this quasi-axis.  We next show that the terms in the distance formula for $d_G(x,g^nx)$ can be divided into two sets: the domains that are orthogonal to all $U_i$ and the domains that nest into some $U_i$.   The first technical step is to give an upper bound to the contribution to $d_G(x,g^nx)$ from domains that are orthogonal to all $U_i$.  This gives a lower bound on the contribution from domains that nest into some $U_i$.  The second technical step in the proof uses the passing-up lemma and a counting argument to show that, in fact, some $U_i$ itself must have a uniformly large contribution to the distance formula.  This will then give a uniform lower bound on the translation length $\tau_{U_i}(g)$.  Because the dependence of the constants at each step is crucial to our arguments, we describe every step in detail to make this explicit.

\subsection{Step 1: Quasi-axes}\label{subsec:quasi-axes-CU} 

For each $i\leq m$, Proposition~\ref{prop:big-loxodromic} says that $g$ acts on $\mathcal CU_i$ as a loxodromic isometry. A standard fact of hyperbolic spaces is that every loxodromic isometry has a quasiaxis. We make this precise with the following.

\begin{lemma}\label{lem:quasi-axis}
There is a constant $R$ such that the following hold. Let $k\ge1$ be such that $\calC U$ is $k$--hyperbolic for all $U\in\s$. There exists $\alpha_i\subseteq \mathcal CU_i$ such that:
\begin{itemize}
\item $\alpha_i$, with the subspace metric inherited from $\mathcal CU_i$, is $(Rk,Rk)$--quasi-isometric to $\mathbb R$;
\item $\alpha_i$ is $Rk$--quasiconvex; and
\item $\alpha_i$ is $\langle g\rangle$--invariant.
\end{itemize}
\end{lemma}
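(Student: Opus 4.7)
The plan is to apply the classical fact that any loxodromic isometry of a $k$-hyperbolic geodesic space admits an invariant quasi-axis whose quality depends only on $k$. Since $g$ has been replaced by $g^{c!}$ so that $gU_i = U_i$ for each $i$, Proposition~\ref{prop:big-loxodromic} guarantees that $g$ acts loxodromically on $\calC U_i$. I will build $\alpha_i$ from the boundary fixed points of $g$ rather than from an orbit $\bigcup_n g^n[y, gy]$, as the latter introduces constants depending on $\tau_{\calC U_i}(g)$, which can be arbitrarily small (cf.\ Section~\ref{sec:strong_18}).

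First, I would use loxodromicity to extract the two distinct attracting/repelling fixed points $p_i^{\pm}\in\partial\calC U_i$ and construct a bi-infinite $(1,C_0 k)$-quasi-geodesic $\gamma_i$ between them, for a universal constant $C_0$. In any $k$-hyperbolic geodesic space, any two distinct points of the Gromov boundary are connected by such a quasi-geodesic; concretely, one takes geodesic segments $[g^{-n}y_0, g^n y_0]$ for a basepoint $y_0\in\calC U_i$ and extracts a fellow-traveling limit using thin-triangle estimates.

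Next, since $g$ fixes both $p_i^\pm$, the translate $g\gamma_i$ is another bi-infinite $(1,C_0 k)$-quasi-geodesic with the same endpoints, so by stability of quasi-geodesics in hyperbolic spaces there is a universal $C_1$ such that $g\gamma_i$ lies in the $C_1 k$-neighbourhood of $\gamma_i$. Setting
\[
\alpha_i \,=\, \bigcup_{n \in \Z} g^n \gamma_i,
\]
we obtain a $\langle g\rangle$-invariant set contained in the $C_1 k$-neighbourhood of $\gamma_i$; in particular, $\alpha_i$ inherits the quasi-line structure of $\gamma_i$ and is $(C_2 k, C_2 k)$-quasi-isometric to $\R$ for a universal $C_2$. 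Quasi-convexity of $\alpha_i$ then follows from quasi-geodesic stability: any geodesic in $\calC U_i$ between two points of $\alpha_i$ has endpoints within $C_1 k$ of $\gamma_i$, hence fellow-travels $\gamma_i$ at distance $O(k)$, and therefore stays $O(k)$-close to $\alpha_i$. Choosing $R$ to exceed all of $C_0, C_1, C_2$ completes the argument.

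I expect the main obstacle to be producing the bi-infinite quasi-geodesic $\gamma_i$ in the first step without using properness of $\calC U_i$: a careful thin-triangle argument is required in place of a naive Arzel\`a--Ascoli extraction. Once $\gamma_i$ is in hand, the remaining steps are essentially formal consequences of hyperbolicity whose constants depend only on $k$; this independence from the specific element $g$---and in particular from $\tau_{\calC U_i}(g)$---is the entire reason for working with boundary fixed points rather than with the orbit construction $\bigcup_n g^n[y,gy]$, whose quality would degrade as $\tau_{\calC U_i}(g)\to 0$.
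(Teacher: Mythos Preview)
Your proposal is correct and follows the same core strategy as the paper: build a bi-infinite $(1,O(k))$--quasigeodesic $\gamma$ joining the two boundary fixed points of $g$, take $\alpha_i=\bigcup_{n\in\Z}g^n\gamma$, and use Morse stability to see that all translates lie in an $O(k)$--neighbourhood of $\gamma$, whence $\alpha_i$ is a $\langle g\rangle$--invariant, $O(k)$--quasiconvex quasiline with the subspace metric.

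One small difference worth noting: the paper first passes to the geodesic closure $\alpha$ of $\bigcup g^n\gamma$ (to get quasiconvexity for free), and then, to verify that $\alpha$ is a uniform quasiline, builds an auxiliary set $\beta=\bigcup g^{\ell n}[p,g^\ell p]$ using a power $\ell=\ell(g)$ chosen so that $\dist(p,g^\ell p)\ge 100k$; this step \emph{does} invoke $\tau_{U_i}(g)>0$, though $\ell$ disappears from the final constants. Your argument bypasses $\beta$ entirely by observing directly that $\alpha_i$ is Hausdorff--close to $\gamma$ and hence inherits its quasiline structure, which is cleaner and makes the independence from $\tau_{U_i}(g)$ more transparent. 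Both routes yield the same conclusion with constants depending only on $k$.
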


\begin{proof}
Since uniformly hyperbolic geodesic spaces are uniformly coarsely dense in their injective hulls \cite[Prop.~1.3]{lang:injective}, this s a consequence of Proposition~\ref{prop:quasiaxes}.
%
%
\end{proof}

\begin{remark} \label{rem:bigger_E}
Since $R$ is a universal constant, there is no harm in increasing $E$ to assume that $E\ge Rk$. Thus, when we later refer to Lemma~\ref{lem:quasi-axis}, we shall take the constants in its conclusion to all be $E$. Actually, we shall later make one final increase of $E$ by an amount dependent only on the partial realisation axiom; see Section~\ref{subsec:which-point}.
\end{remark}

\begin{corollary} \label{cor:translation_bound}
Let $x\in\alpha_i$. If $n>0$ is such that $\dist_{U_i}(x,g^nx)\geq 14E$, then $\tau_{U_i}(g)\ge\frac En$.
\end{corollary}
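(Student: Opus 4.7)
\emph{Plan.} The strategy is to use the quasi-axis $\alpha_i$ from Lemma~\ref{lem:quasi-axis} to upgrade the single-displacement hypothesis $\dist_{U_i}(x, g^n x) \geq 14E$ into an asymptotic bound on $\tau_{U_i}(g^n) = n\tau_{U_i}(g)$; dividing by $n$ at the end yields the conclusion.

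First, I would observe that the orbit $\{g^{jn}x : j \in \Z\}$ lies on $\alpha_i$, since $x \in \alpha_i$ and $\alpha_i$ is $\langle g\rangle$-invariant. Because $g$ acts loxodromically on $\calC U_i$ (by Proposition~\ref{prop:big-loxodromic}) and preserves $\alpha_i$, it translates points monotonically along this quasi-line toward one of its two $g$-fixed endpoints in $\partial \calC U_i$. Hence, after fixing an orientation of $\alpha_i$, the sequence $(g^{jn}x)_{j\geq0}$ is monotone along $\alpha_i$, with consecutive points at $\calC U_i$-distance exactly $\dist_{U_i}(x, g^n x) \geq 14E$.

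Next, I would invoke a Morse-type near-additivity estimate. Since $\alpha_i$ is an $E$-quasiconvex $(E,E)$-quasi-geodesic in the $E$-hyperbolic space $\calC U_i$, the sub-arc of $\alpha_i$ between any $p, r \in \alpha_i$ stays within Hausdorff distance $M$ of the geodesic $[p, r]$, where $M$ depends only on the actual (not upper-bound) quasi-geodesic and hyperbolicity constants. Consequently, for $p, q, r$ in monotone order along $\alpha_i$, the triangle inequality yields
\[
\dist_{U_i}(p, r) \,\geq\, \dist_{U_i}(p, q) + \dist_{U_i}(q, r) - 2M.
\]
Iterating this along the monotone sequence $g^0 x, g^n x, g^{2n}x, \ldots, g^{kn}x$ produces
\[
\dist_{U_i}(x, g^{kn}x) \,\geq\, k \dist_{U_i}(x, g^n x) - 2M(k-1).
\]
Dividing by $k$ and sending $k \to \infty$ gives $\tau_{U_i}(g^n) \geq \dist_{U_i}(x, g^n x) - 2M \geq 14E - 2M$.

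To finish, I would use Remark~\ref{rem:bigger_E}: since $M$ is determined by data fixed prior to the choice of $E$, one may enlarge $E$ once more so that $2M \leq 13E$. Then $\tau_{U_i}(g^n) \geq E$, and hence $\tau_{U_i}(g) = \tau_{U_i}(g^n)/n \geq E/n$, as required. The principal technical obstacle is this final calibration---verifying that the Morse constant for $\alpha_i$ is indeed absorbed by $13E$---which is precisely the reason for the slack factor of $14$ in the hypothesis; the rest of the argument is the routine monotonicity-and-chaining computation above.
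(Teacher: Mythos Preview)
Your approach is correct and reaches the same inequality $\dist_{U_i}(x,g^nx)\le\tau_{U_i}(g^n)+\mathrm{const}$ as the paper, but by a genuinely different route.

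The paper does not chain along the orbit. Instead it uses the $\langle g\rangle$--equivariant coarse closest-point projection $\calC U_i\to\alpha_i$ to obtain $\dist_{U_i}(x,g^nx)\le\inf_y\dist_{U_i}(y,g^ny)+10E$, and then passes to the injective hull $H$ of $\calC U_i$: since $H$ has barycentres, Lemma~\ref{lem:stable_unstable} gives $|g^n|_H=\tau_H(g^n)$, and Lang's coarse-density result transfers this back to $\calC U_i$, yielding $\dist_{U_i}(x,g^nx)\le n\tau_{U_i}(g)+13E$ with an explicit tally of the additive error. Your argument has the advantage of being self-contained hyperbolic geometry, avoiding the injective-hull machinery altogether; the paper's has the advantage of reusing the barycentre apparatus from Section~\ref{sec:UU} and producing the constant $13E$ without a further enlargement of $E$. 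One point in your write-up deserves an extra sentence: monotonicity of $(g^{jn}x)_j$ along $\alpha_i$ is not automatic for a loxodromic preserving a quasi-line. It holds here because the step $\dist_{U_i}(x,g^nx)\ge14E$ is large relative to the actual quasi-geodesic constants $Rk$ of $\alpha_i$ (fixed before $E$ was enlarged), so the induced coarse translation on the $\R$--parameter of $\alpha_i$ cannot change sign; this is routine but should be stated.
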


\begin{proof}
Since $\alpha_i$ is $\langle g\rangle$--invariant and $(E,E)$--quasi-isometric to $\R$, any two points on $\alpha_i$ are moved the same distance by $g^n$, up to an error of at most $5E$. As $\alpha_i$ is $E$--quasiconvex, we can consider the $\langle g\rangle$--equivariant coarse closest point projection $\mathcal CU_i\to\alpha_i$. Given $y\in\mathcal C U_i$, its projection $\bar y$ is $2E$--close to a geodesic from $y$ to $g^ny$ and similarly for $g^n\bar y$. It follows that
\[
\dist_{U_i}(x,g^nx) \,\le\, \dist_{U_i}(\bar y,g^n\bar y)+5E\,\le\, \dist_{U_i}(y,g^ny)+10E.
\]
According to \cite[Prop.~1.3]{lang:injective}, $\calC U_i$ is $E$--coarsely dense in its injective hull $H$, which is $E$--hyperbolic. Lemma~\ref{lem:stable_unstable} shows that there is some $y'\in H$ such that $\dist_H(y',g^ny')\le\tau_{U_i}(g^n)+E$. Choosing $y\in\calC U_i$ so that $\dist_H(y,y')\le E$, we see that $\dist_{U_i}(x,g^nx)\le n\tau_{U_i}(g)+13E$. In particular, if $\dist_{U_i}(x,g^nx)\ge14E$, then $\tau_{U_i}(g)\ge\frac En$.
\end{proof}
 
In view of this corollary, our task is to produce a uniform constant $J$, independent of $g$, such that $\dist_{U_i}(x,g^Jx)>14E$ for some $i$.

\subsection{Step 2: Choosing which point to move}\label{subsec:which-point} 

We fix, for the remainder of the proof, a point $x\in G$ as follows.  For each $i\in\vbig(g)$, fix some $x_i\in\alpha_i$.  Since the elements of $\vbig(g)$ are pairwise orthogonal by Lemma~\ref{lem:Big_lemma}, the partial realisation axiom \cite[Def.~1.1.(8)]{behrstockhagensisto:hierarchically:2} provides a point $x\in G$ such that
\begin{itemize}
\item   $\dist_{U_i}(x,x_i)\leq E$ for all $i$, and
\item   $\dist_V(x,\rho^{U_i}_V)\leq E$ for all pairs $(i,V)$ where either $U_i\pnest V$ or $U_i\trans V$. 
\end{itemize}

With one final uniform enlargement of $E$, for convenience only, we replace each $\alpha_i$ by its $E$--neighbourhood in $\mathcal CU_i$, so that, for this fixed $x\in G$, we have $\pi_{U_i}(x)\in\alpha_i$ for all $i$.

\subsection{Step 3: Organising distance formula terms}\label{subsec:organising-terms} 

Recall that the distance formula, Theorem~\ref{thm:DF} yields a constant $D_0\geq 6E$. We partition the $D_0$--relevant domains as follows.
Fix $n\geq 0$, and let
\[
\calW^n=\{ W \in \Rel_{D_0}(x,g^nx) \,:\, W\perp U_i\text{ for all }i\}
\]
and 
\[
\calV^n_i=\{ V \in \Rel_{D_0}(x,g^nx) \,:\, V \sqsubseteq U_i \},
\]
where $i\in\{1,\dots,m\}$. We denote the union of the $\calV^n_i$ by $\calV^n$. 

Note that $\calV^n_i\cap\calV^n_j=\emptyset$ for $i\neq j$, as $U_i\perp U_j$.  Similarly, $\mathcal W^n\cap\mathcal V^n=\emptyset$. The sets $\calV^n$ and $\calW^n$ fit into the following distance estimate.  Recall that $\Rel_{D}(x,y)$ denotes the set of all $U\in \frak S$ with $d_U(x,y)\geq D$; see Definition~\ref{defn:relevant_domains}.

\begin{lemma}\label{lem:distance-estimate}
For all $n\in\Z$, if $V\in\Rel_{5E}(x,g^nx)$, then either $V\bot U_i$ for all $i$, or $V\nest U_i$ for some $i$. Consequently, there exists a constant $A$ independent of $n$ such that 
\[
\frac1A\dist_G(x,g^{n}x)-A \;\le\; 
\underset{V \in \calV^n}{\sum} \dist_V(x,g^{n}x) \,+\!\!\!\underset{\,W \in \calW^n}{\sum} \!\dist_W(x,g^{n}x)
\;\le\; A\dist_G(x,g^nx)+A.
\]
\end{lemma}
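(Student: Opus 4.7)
The plan is first to establish the dichotomy for $V \in \Rel_{5E}(x, g^n x)$, and then to obtain the distance estimate by invoking the distance formula (Theorem~\ref{thm:DF}) at threshold $E' = \max\{5E, D_0\}$.

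For the dichotomy, I would argue by contradiction: suppose $V \in \Rel_{5E}(x, g^n x)$ is not orthogonal to every $U_i$ and does not nest into any $U_i$. Fix $i$ with $V \not\perp U_i$; since $V \not\nest U_i$ as well, the remaining options are $U_i \pnest V$ or $V \pitchfork U_i$. In either case, the partial realisation bound from Step~2 gives $\dist_V(x, \rho^{U_i}_V) \le E$. The crux is that $g^n x$ also projects $E$--close to $\rho^{U_i}_V$. Since $g^n U_i = U_i$, the relation between $U_i$ and $g^{-n}V$ matches the relation between $U_i$ and $V$, so applying the Step~2 bound to the pair $(i, g^{-n}V)$ yields $\dist_{g^{-n}V}(x, \rho^{U_i}_{g^{-n}V}) \le E$. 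HHG equivariance --- specifically the isometry $g^n \colon \calC(g^{-n}V) \to \calC V$ together with $g^n \rho^{U_i}_{g^{-n}V} = \rho^{U_i}_V$ --- then transports this to $\dist_V(g^n x, \rho^{U_i}_V) \le E$. Combined with $\diam \rho^{U_i}_V \le E$, the triangle inequality gives $\dist_V(x, g^n x) \le 3E < 5E$, a contradiction. Hence $V \nest U_i$, completing the dichotomy.

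For the distance estimate, the distance formula at threshold $E' \ge D_0$ produces a constant $A = A_{E'}$ such that the usual two-sided inequality holds for $\sum_{U \in \Rel_{E'}(x, g^n x)} \dist_U(x, g^n x)$. Since $E' \ge 5E$, the dichotomy just established shows that each $U \in \Rel_{E'}(x, g^n x)$ lies either in some $\calV^n_i$ or in $\calW^n$. As noted in the excerpt, the sets $\{\calW^n, \calV^n_1, \dots, \calV^n_m\}$ are pairwise disjoint: the $\calV^n_i$ are mutually disjoint because $U_i \perp U_j$ for $i \neq j$, and they are disjoint from $\calW^n$ by mutual exclusion of $\nest$ and $\perp$. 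The sum therefore splits exactly as in the claim, with the same constant $A$, which is independent of $n$.

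The principal delicacy will be transferring the partial realisation bound from $x$ to $g^n x$: this is where the $\langle g \rangle$--invariance of each $U_i$ combines with HHG equivariance of the projection maps and $\rho$--sets. Once this is handled, everything else is bookkeeping.
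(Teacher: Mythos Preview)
Your proof is correct and follows essentially the same approach as the paper: both use the partial realisation property of $x$, the $\langle g\rangle$--invariance of each $U_i$, and HHG equivariance of the $\rho$--sets to show $\dist_V(x,g^nx)\le 3E$ whenever $U_i\pnest V$ or $U_i\pitchfork V$, then invoke the distance formula at threshold $E'$. The only cosmetic difference is that the paper carries out the equivariance step in $\mathcal C(g^sV)$ and then translates back, whereas you work in $\mathcal C(g^{-n}V)$ and translate forward.
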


\begin{proof}
Fix $n\in\Z$. If $V \in \mathfrak S$ satisfies $U_i \pnest V$ or $U_i \pitchfork V$ for some $i$, then $\dist_V(x,g^{s}x)\leq 3E$ for all $s \in \mathbb{N}.$ To see this, note that $\rho^{g^{s}U_i}_{g^{s}V}=\rho^{U_i}_{g^{s}V}$, since  $gU_i=U_i$, and, by definition of~$x$, we have $\dist_{g^{s}V}(x,\rho^{U_i}_{g^{s}V})\leq E$. We also have $\dist_{g^{s}V}(g^{s}x, \rho^{U_i}_{g^{s}V})=\dist_V(x,\rho^{U_i}_V) \leq E$. Hence, it follows from the triangle inequality that $\dist_{g^{s}V}(g^{s}x, x)\leq 3E$, and translating by $g^{-s}$ gives the desired result. (The extra $E$ comes from the fact that $\rho^\bullet_\bullet$ are sets of diameter at most $E$.) Thus every $V\in\relevant_{5E}(x,g^nx)$ must be either nested in some $U_i$, or orthogonal to all $U_i$. In particular, $\Rel_{D_0}(x,g^nx)=\calV^n\cup\calW^n$. The second statement is given by the distance formula, Theorem~\ref{thm:DF}, with threshold $D_0\ge6E$. 
\end{proof}

\subsection{Step 4: Controlling orthogonal terms}\label{subsec:controlling-orthogonal} 
Next, we give a lower bound on the contribution to $\dist_G(x,g^nx)$ coming from elements of $\calV^n$ by finding an upper bound on the contribution to $\dist_G(x,g^nx)$ coming from elements of $\calW^n$. 

\begin{lemma}\label{lem:proportion} 
There exist $\epsilon=\epsilon(\mathfrak S)>0$ and $N=N(g,x)$ as follows. For all $n\geq N$, there exists $U_k \in \vbig(g)$ satisfying
\[
\sum_{V\in\calV^n_k}\dist_V(x,g^nx)\ge\epsilon n.
\]
\end{lemma}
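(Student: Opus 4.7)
The plan is to combine the distance formula (Theorem~\ref{thm:DF}) with uniform undistortion (Proposition~\ref{prop:HHG-uniform-undistortion}) to obtain a linear-in-$n$ lower bound on the total relevant projection contribution, split via Lemma~\ref{lem:distance-estimate} into the $\calV^n$ and $\calW^n$ parts, show that the $\calW^n$ contribution is bounded uniformly in $n$ (with bound depending on $g, x, E'$), and conclude by pigeonhole. Concretely, Proposition~\ref{prop:HHG-uniform-undistortion} supplies a constant $K = K(\s) > 0$ with $\dist_G(x, g^n x) \ge Kn$, and Theorem~\ref{thm:DF} at threshold $E' \ge D_0$ then gives
\[
\sum_{V \in \Rel_{E'}(x, g^n x)} \dist_V(x, g^n x) \,\ge\, \frac{Kn}{A_{E'}} - A_{E'}.
\]
If I can produce a constant $\kappa = \kappa(g, x, E')$ with $\sum_{W\in\calW^n} \dist_W(x, g^n x) \le \kappa$ for every $n$, then for $n \ge N := 2A_{E'}(\kappa + A_{E'})/K$ the remainder $\sum_{\calV^n}\dist_V(x,g^nx)$ exceeds $Kn/(2A_{E'})$, and pigeonhole over the $m \le c$ disjoint classes $\calV^n_k$ (disjointness from Lemma~\ref{lem:Big_lemma}.\eqref{item:big-orthogonal} and mutual exclusivity of $\nest$ and $\perp$) yields some $k$ with $\sum_{\calV^n_k}\dist_V(x,g^nx) \ge Kn/(2A_{E'} c)$; I take $\epsilon = K/(2A_{E'} c)$, which depends only on $\s$ and $E'$.

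The crux is therefore the $\kappa$-bound on $\sum_{\calW^n}$. My first observation will be that every $W \in \calW^n$ satisfies $W \perp U_i$ for all $i$, whence $W \ne U_i$ (orthogonality is anti-reflexive) so $W \notin \vbig(g)$. By Lemma~\ref{lem:Big_lemma}.\eqref{item:big-bounded-outside} this forces $\diam \pi_W(\langle g \rangle) \le D(g, \s)$, so each individual summand is bounded by some $\Delta(g, x)$ independent of $n$ and of $W$. What remains is to see that $|\calW^n|$ cannot grow with $n$ fast enough to break the bound on the sum: the natural approach is to exploit the sub-HHG structure on $\s_\perp := \{W \in \s : W \perp U_i \text{ for all } i\}$, which is $\nest$-downward closed (by the axiom that $V \nest W$ and $W \perp U_i$ imply $V \perp U_i$) and $\langle g \rangle$-invariant (since $g U_i = U_i$), and under which $\langle g \rangle$ has empty big-set; applying the sub-HHG's distance formula to a gate-projection of $x, g^n x$ onto a hierarchically quasiconvex orthogonal subspace then yields the bound. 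A self-contained alternative is an application of the passing-up lemma (Lemma~\ref{lem:passingup}) combined with case-analysis on how a passed-up $W'$ relates to each $U_i$: any such $W'$ with $\dist_{W'}(x,g^nx) > \max\{\Delta + 1, 3E + 1\}$ must be nested in some $U_k$ (all other cases --- $W' \in \s_\perp$, $U_i \pnest W'$, $W' \trans U_i$, or $W'=S$ --- contradict either the $\Delta$-bound or the $3E$-bound coming from the proof of Lemma~\ref{lem:distance-estimate}), so whenever $|\calW^n|$ exceeds $P(\max\{\Delta+1,3E+1\})$ we already find some $\calV^n_k$ with sum $\ge \max\{\Delta+1,3E+1\}$, and otherwise $|\calW^n|$ is bounded by this same constant, giving $\sum_{\calW^n} \le P(\cdots)\cdot\Delta =: \kappa$.

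I expect the main obstacle to be this $\kappa$-bound: the passage to a sub-HHG on $\s_\perp$ is standard but needs care to verify that $g$ really acts with bounded orbits on the orthogonal factor despite potentially permuting infinitely many domains of $\s_\perp$, and the passing-up alternative requires careful bookkeeping to convert each single application into cardinality control on $\calW^n$. Once $\kappa$ is in hand, the distance formula and pigeonhole close the argument with an $\epsilon$ that is manifestly independent of $g$ and all dependence on the element $g$ (through $\Delta(g,x)$, $D(g,\s)$, and ultimately $\kappa$) is absorbed into the threshold $N$.
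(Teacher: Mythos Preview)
Your overall plan matches the paper's exactly: combine uniform undistortion with the distance formula at threshold $E'$ to get a linear-in-$n$ lower bound on $\sum_{\calV^n}+\sum_{\calW^n}$, bound $\sum_{\calW^n}$ by a constant $\kappa=\kappa(g,x,E')$, then pigeonhole over the disjoint $\calV^n_k$. The paper's $\epsilon$ is your $K/(2A_{E'}m)$ with $m\le c$, and its $N$ is your $N$.

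There is, however, a gap in how you close the passing-up step. Your dichotomy reads: either $|\calW^n|>P$, in which case the passed-up domain $W'$ nests in some $U_k$ and you record ``some $\calV^n_k$ with sum $\ge\max\{\Delta+1,3E+1\}$''; or $|\calW^n|\le P$, giving $\sum_{\calW^n}\le P\Delta=:\kappa$. But the first branch only yields a \emph{constant} lower bound on some $\calV^n_k$, not the $\epsilon n$ the lemma demands, so the dichotomy as stated does not prove the lemma. The missing line is that the first branch is in fact \emph{impossible}: passing-up produces $W'$ together with some $W\in\calW^n$ satisfying $W\pnest W'$; your own case-analysis gave $W'\nest U_k$; by transitivity $W\nest U_k$, which contradicts $W\perp U_k$ from the definition of $\calW^n$. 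Hence $|\calW^n|\le P$ holds for every $n$, and the $\kappa$-bound is unconditional. The paper reaches this same contradiction more directly, without your case-split on the relation of $W'$ to each $U_i$: it simply takes the passing-up threshold $C>D$ (your $\Delta$), so that $\dist_{W'}(x,g^nx)>D$ forces $W'\in\vbig(g)$ outright, i.e.\ $W'=U_k$ for some $k$, and then $W\pnest U_k$ collides with $W\perp U_k$. Your sub-HHG alternative (a) is therefore unnecessary.
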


\begin{proof}  
By Lemma~\ref{lem:Big_lemma}\eqref{item:big-bounded-outside}, there is a constant $D=D(\s,g,x)$ such that $\diam(\pi_V(\langle g \rangle \cdot x))<D$ for all $V \notin \vbig(g)$. Lemma~\ref{lem:Big_lemma} is stated for $x=1$, but the bound for $x=1$ yields a bound for arbitrary $x$ in terms of $\dist_G(1,x)$ and $E$, since the maps $\pi_V$ are all $(E,E)$--coarsely Lipschitz.

\begin{claim*}
There is a constant $P=P(D,E, \frak S, G)$ such that $\underset{W \in \calW^n}{\sum} d_W(x,g^{n}x)\leq PD$ for all $n\in\Z$.
\end{claim*}

\begin{claimproof} 
Let $C=\max\{5E,2D\}$. Let $P=P(C)$ be the constant from the passing-up lemma, Lemma~\ref{lem:passingup}. 
Fix $n\geq 0$. By definition, $\calW^n$ is disjoint from $\vbig(g)$, so $\dist_W(x,g^{n}x) \leq D$ for all $W\in\calW^n$. Thus, if the claim did not hold then we would have $|\calW^n|>P$. Also by definition, $\dist_W(x,g^nx)>E$ for all $W\in\calW^n$. By the passing-up lemma, this would imply the existence of some $V\in\s$ such that $V\pconest W$ for some $W\in\calW^n$, and with $\dist_V(x,g^nx)>C\ge D$. The latter property forces $V$ to lie in $\vbig(g)$, but then $W\pnest V$ and $W\bot V$, which is a contradiction.
\end{claimproof}

Proposition~\ref{prop:HHG-uniform-undistortion} provides a positive constant $K=K(G,\s)$ such that $\dist_G(x,g^{n}x)>Kn$ for all $n\ge 0$. For such $n$ we have  
\[
\frac{Kn}{A}-A \,\leq\, \underset{V \in \calV^n}{\sum} d_V(x,g^{n}x) + \underset{W \in \calW^n}{\sum} d_V(x,g^{n}x),
\]
where $A$, provided by Lemma~\ref{lem:distance-estimate}, is independent of $n$. By the claim, the latter term is bounded above by $PD$, which is independent of $n$. Let $N=\frac{2A}K(A+PD)$. We have shown that if $n\ge N$, then
\[
\underset{V \in \calV^n}{\sum} d_V(x,g^{n}x) \,\geq\, \frac{Kn}{A}-A-PD \,\ge\, \frac{Kn}{2A}.
\]
Since the $\calV^n_k$ are disjoint for fixed $n$, the conclusion holds with $\epsilon=\frac K{2Am}$, where $m=|\vbig(g)|$ is bounded by the complexity of $\s$ and $K$ and $A$ are independent of both $n$ and $g$. 
\end{proof}

For the remainder of the proof of Theorem~\ref{thm:main}, fix a domain $U=U_k$ such that the conclusion of Lemma~\ref{lem:proportion} holds for arbitrarily large $n$. Let $\mathbb N_\epsilon$ be the set of such $n$, and let $\alpha=\alpha_k$. 

\subsection{Step 5: Accumulating distance in nested domains} 

There are now two cases to consider, depending on  how the sum in Lemma~\ref{lem:proportion} is distributed over $\calV^n_k$.  In each case, we will find a uniform lower bound on $\tau_U(g)$, which will complete the proof of the theorem.

\subsubsection*{Case 1: No relevant proper nesting} \label{subsec:no_nesting} ~

If, for our chosen $U$, all of the properly nested domains are $D_0$--irrelevant for all $n\in \mathbb N_\epsilon$, then the proof of the theorem concludes by applying Lemma~\ref{lem:proportion}.

\begin{corollary}\label{cor:small-V-gone}
If $\dist_V(x,g^nx)<D_0$ for all $V\pnest U$ and every $n\in\mathbb N_\epsilon$, then $\tau_U(g)\geq\epsilon$.
\end{corollary}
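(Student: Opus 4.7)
The plan is to observe that the hypothesis forces the sum $\sum_{V\in\calV^n_k}\dist_V(x,g^nx)$ from Lemma~\ref{lem:proportion} to collapse onto the single term $\dist_U(x,g^nx)$, and then to convert the resulting linear-in-$n$ lower bound into a lower bound on the stable translation length.

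First, fix $n\in\mathbb N_\eps$. By definition, every $V\in\calV^n_k$ satisfies $V\nest U_k=U$ and $\dist_V(x,g^nx)\ge E'\ge 5E$. The standing hypothesis is precisely that no $V\pnest U$ can satisfy $\dist_V(x,g^nx)\ge 5E$, so the only element of $\calV^n_k$ is $U$ itself. Lemma~\ref{lem:proportion} therefore reduces to $\dist_U(x,g^nx)\ge\eps n$ for every $n\in\mathbb N_\eps$.

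Next, I would invoke the estimate used inside the proof of Corollary~\ref{cor:translation_bound}: since $x$ was chosen in Step~2 so that $\pi_U(x)\in\alpha$, and since $\calC U$ is $E$--coarsely dense in its $E$--hyperbolic injective hull, Lemma~\ref{lem:stable_unstable} together with the minset argument there yields
\[
\dist_U(x,g^nx)\ \le\ n\,\tau_U(g)+13E
\]
for every $n\ge 1$. Combining with the previous step, for every $n\in\mathbb N_\eps$ we have
\[
\tau_U(g)\ \ge\ \eps-\frac{13E}{n}.
\]

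Finally, I would use that $\mathbb N_\eps$ is unbounded: indeed, the chosen $U=U_k$ was defined in Step~4 as a domain for which the conclusion of Lemma~\ref{lem:proportion} holds for arbitrarily large $n$, so $\mathbb N_\eps$ is infinite. Letting $n\to\infty$ along $\mathbb N_\eps$ yields $\tau_U(g)\ge\eps$, as required. There is no genuine obstacle in this step; the content was already packaged into Lemma~\ref{lem:proportion} and Corollary~\ref{cor:translation_bound}, and the hypothesis of the corollary is precisely what is needed to prune $\calV^n_k$ down to $\{U\}$.
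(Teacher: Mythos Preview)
Your proof is correct and follows the same strategy as the paper: the hypothesis forces $\calV^n_k=\{U\}$, so Lemma~\ref{lem:proportion} gives $\dist_U(x,g^nx)\ge\eps n$ for all $n\in\mathbb N_\eps$, and one then passes to the limit. The only difference is that the paper's final step is more direct: rather than invoking the injective-hull estimate from inside Corollary~\ref{cor:translation_bound}, it simply uses the definition $\tau_U(g)=\lim_{n}\dist_U(x,g^nx)/n$ (the limit exists by subadditivity, hence agrees with the limit along the unbounded set $\mathbb N_\eps$) to conclude immediately.
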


\begin{proof}
For each $n\in\mathbb N_\epsilon$, we must have $\calV^n_k=\{U\}$, and Lemma~\ref{lem:proportion} then gives $\dist_U(x,g^nx)>\epsilon n$. Since $\tau_U(g)=\lim_{n\in\mathbb N_\epsilon}\dist_U(x,g^nx)/n$, we conclude that $\tau_U(g)\geq\epsilon$.
\end{proof}

Since $\epsilon=\epsilon(\s)$, this completes the proof in this case.

\subsubsection*{Case 2: Relevant proper nesting}\label{subsec:control-nested} ~

Suppose the assumption of Corollary~\ref{cor:small-V-gone} does not hold; that is, assume  there is some $n\in\mathbb N_\epsilon$ and some $V_n\pnest U$ such that $\dist_{V_n}(x,g^nx)\ge D_0>5E$. If there is more than one such $V_n$, fix a $\nest$--maximal choice. 

If $\dist_U(x,gx)>14E$, then, since $\pi_U(x)\in\alpha$, Corollary~\ref{cor:translation_bound} implies that $\tau_U(g)>E$, and the theorem is proved for the given $g$. Hence we can assume that $\dist_U(x,gx)\leq 14E$.

The intuition behind the strategy of this part of the proof is as follows.  First, we find a domain $V$ that (intuitively, though not precisely) is relevant for $x$ and any point further along the axis of $g$ in $\mathcal CU$ than $g^kx$ for some $k$: see Figure~\ref{fig:VStatement}.  The specific way we find $V$ also shows that for any $i$, the domain $g^iV$ is  relevant for $x$ and any point further along the axis than $g^{k+i}x$.  If $i$ is large enough, then there are lots of domains $g^jV$ that are relevant for the fixed pair of points $x$ and $g^{k+i}x$; in fact, \textit{most} of the domains $g^jV$ with $0< j < i$ are relevant.  The passing up lemma gives a uniform upper bound on the number of possible relevant domains that can appear before  $\dist_U(x,g^{k+i}x)$ must be uniformly large.  From this, we deduce a uniform lower bound on translation length.  Making this argument precise takes some care.

\begin{figure}
    \centering
    \def\svgwidth{4in}
    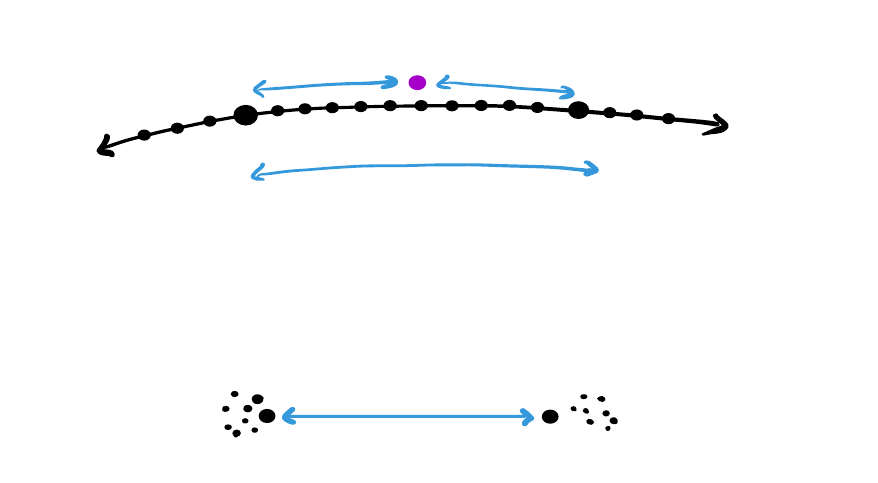
    \caption{The properties of the domain $V\pnest U$ constructed in Lemma~\ref{lem:V}.}
    \label{fig:VStatement}
\end{figure}

\begin{lemma} \label{lem:V}
Under the above assumptions, there exists $V\pnest U$ and a natural number $k$ such that the following hold.
\begin{enumerate}[label=(\roman*)]
\item   $\dist_U(x,g^kx)\le 50E$.
\item   $\dist_U(\{x,g^kx\},\rho^V_U)>5E$.
\item   If $j<0$, then $\dist_V(g^jx,x)\le E$.
\item   If $j>k$, then $\dist_V(g^kx,g^jx)\le E$.
\item   $\dist_V(x,g^kx)>3E$.
\item   If $V\pnest W\pnest U$, then $\dist_W(x,g^kx)\le7E$.
\end{enumerate}
\end{lemma}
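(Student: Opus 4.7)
The strategy is to exploit the quasi-axis $\alpha$ of $g$ on $\mathcal CU$, combined with the bounded geodesic image axiom (Lemma~\ref{lem:BGIA}), to locate $\rho^{V_n}_U$ along $\alpha$ and then choose $k$ so that $\pi_U(g^kx)$ has just overshot this projection. The bound $\dist_U(x,gx)\le 14E$ is the crucial hypothesis, since it ensures that the orbit $\pi_U(g^jx)$ marches along $\alpha$ in steps of size at most $14E$, so that an appropriate $k$ exists with $\dist_U(x,g^kx)\le 50E$.

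The first step is to apply the contrapositive of Lemma~\ref{lem:BGIA} to the pair $(x,g^nx)$: since $\dist_{V_n}(x,g^nx)>5E>E$, every geodesic in $\mathcal CU$ between $\pi_U(x)$ and $\pi_U(g^nx)$ passes within $E$ of $\rho^{V_n}_U$. Because $\alpha$ is $E$-quasiconvex and both endpoints lie on $\alpha$, this forces $\rho^{V_n}_U$ to lie within a uniform constant of the $\alpha$-segment from $\pi_U(x)$ to $\pi_U(g^nx)$.

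Next, use $g$-equivariance: $\rho^{g^iV_n}_U=g^i\rho^{V_n}_U$ shifts along $\alpha$ each time $g$ is applied. Choose $i$ so that, setting $V:=g^iV_n$, the projection $\rho^V_U$ sits on $\alpha$ roughly $14E$ past $\pi_U(x)$ in the direction in which $g$ translates $\alpha$. Then let $k$ be the smallest natural number such that $\pi_U(g^kx)$ has advanced past $\rho^V_U$ by more than $6E$. The $14E$ step bound yields $\dist_U(x,g^kx)\le 50E$, establishing (i), while minimality of $k$ and the choice of $i$ give (ii). Properties (iii) and (iv) then follow from a second application of Lemma~\ref{lem:BGIA}: for $j<0$ or $j>k$ the relevant geodesics lie entirely on one side of $\rho^V_U$ in $\mathcal CU$ at distance exceeding $E$.

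The main obstacle is combining (v) and (vi). For (v) one wants $\dist_V(x,g^kx)>5E$, but Lemma~\ref{lem:BGIA} gives only one direction of the implication. The plan is to use $g$-equivariance to rewrite $\dist_V(x,g^kx)=\dist_{V_n}(g^{-i}x,g^{k-i}x)$, and to arrange $i$ so that the pair $(g^{-i}x,g^{k-i}x)$ straddles the ``transition window'' along which $\pi_{V_n}(g^{\bullet}x)$ accumulates the more than $5E$ of displacement recorded by $\dist_{V_n}(x,g^nx)>5E$. This window must exist, since by BGI the sequence $\pi_{V_n}(g^jx)$ stays close to $\rho^U_{V_n}$ whenever $\pi_U(g^jx)$ is $\alpha$-far from $\rho^{V_n}_U$, and all the displacement must happen while $\pi_U(g^jx)$ passes near $\rho^{V_n}_U$ on $\alpha$. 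For (vi), if some $W$ with $V\pnest W\pnest U$ has $\dist_W(x,g^kx)>5E$, replace $V$ by such a $W$ and repeat; this can happen only boundedly many times (bounded by the complexity of $\mathfrak S$), terminating at a $\nest$-maximal candidate, and the axis-position arguments for (i)--(iv) transfer to the new domain via the same BGI reasoning applied to $\rho^W_U$. The delicate bookkeeping is the interplay between the constants $E$, $14E$, $50E$ and ensuring the appropriate shift $i$ and cutoff $k$ can be realised simultaneously.
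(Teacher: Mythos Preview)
Your approach is essentially the same as the paper's: locate $\rho^{V_n}_U$ near the quasiaxis $\alpha$ via bounded geodesic image, translate by a suitable power of $g$ to position $\rho^V_U$ between $\pi_U(x)$ and $\pi_U(g^kx)$ with definite clearance on both sides, and deduce (iii), (iv) from a second application of BGI. The paper phrases the translation slightly differently---it fixes a point $y\in\alpha$ within $3E$ of $\rho^{V_n}_U$, chooses $k_0<k_1$ minimal with $\dist_U(g^{k_i}x,y)>10E$ on opposite sides of $y$, and sets $V=g^{-k_0}V_n$, $k=k_1-k_0$---but this is exactly your ``choose $i$ and then the minimal $k$'' in different coordinates.

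The one place you work harder than necessary is (vi). You propose to iterate: if $\dist_W(x,g^kx)>5E$ for some $V\sqsubsetneq W\sqsubsetneq U$, replace $V$ by $W$ and repeat. This can be made to work, but each replacement shifts $\rho^W_U$ by up to $E$ (consistency), so the clearance in (ii) degrades, and you would need to inflate your initial constants by a multiple of the complexity. The paper avoids this entirely: recall that just before the lemma, $V_n$ was chosen $\sqsubseteq$-\emph{maximal} among domains $\sqsubsetneq U$ with $\dist_{V_n}(x,g^nx)>5E$. After translating, this maximality gives (vi) immediately for $V=g^{-k_0}V_n$, since any $W$ with $V\sqsubsetneq W\sqsubsetneq U$ corresponds to $g^{k_0}W$ with $V_n\sqsubsetneq g^{k_0}W\sqsubsetneq U$. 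So your iteration is correct in spirit but unnecessary; just invoke the maximality of $V_n$.

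For (v), your ``transition window'' picture is exactly what the paper uses, though the paper compresses it to one sentence: once (iii) and (iv) are established, they combine with the standing hypothesis $\dist_{V_n}(x,g^nx)>5E$ (i.e.\ $\dist_V(g^{-k_0}x,g^{n-k_0}x)>5E$) via the triangle inequality to give (v). Your concern that $i$ must be arranged \emph{both} to position $\rho^V_U$ and to straddle the window is not really a tension: positioning $\rho^V_U$ between $\pi_U(x)$ and $\pi_U(g^kx)$ with clearance is precisely what makes the pair straddle the window.
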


\begin{proof}
By Lemma~\ref{lem:BGIA}, every geodesic from $\pi_U(x)$ to $\pi_U(g^nx)$ must come $E$--close to $\rho^{V_n}_U$. Since $\alpha$ is $2E$--quasiconvex, there is some point $y\in\alpha$ such that $\rho^{V_n}_U$ is contained in the $3E$--neighbourhood of $y$.

Because $x$ lies on the quasiaxis $\alpha$ of $g$, there exist (possibly negative) integers $k_0<k_1$ with $k_1-k_0$ minimal such that: $\dist_U(g^{k_i}x,y)>10E$ (and hence $\dist_U(g^{k_i}x,\rho^{V_n}_U)>7E$) and there is some $2E$--quasigeodesic from $g^{k_0}x$ to $g^{k_1}x$ that contains $y$. Intuitively, $\pi_U(g^{k_0}x)$ is the last point in $\pi_U(\langle g\rangle x)$ before the $10E$--neighbourhood of $y$ and $\pi_U(g^{k_1}x)$ is the first point in the same orbit after the $10E$--neighbourhood.

Let $V=g^{-k_0}V_n$, and let $k=k_1-k_0$.  By construction, $\dist_U(\{x,g^kx\},\rho^V_U)>7E$. This proves Item~(ii). See Figure~\ref{fig:VProof}.

\begin{figure}
    \centering
    \def\svgwidth{4in}
    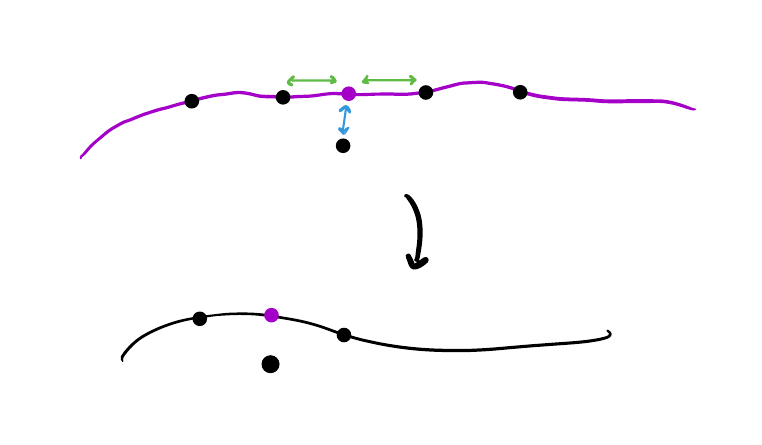
    \caption{Finding the domain $V\pnest U$.}
    \label{fig:VProof}
\end{figure}

Item (i) holds because $k_1-k_0$ was minimal and we are assuming that $\dist_U(x,gx)\le14E$:
\[
\dist_U(x,g^kx) \,\leq\, \dist_U(x,gx)+\dist_U(gx,g^{-k_0}y)+\dist_U(g^{-k_0}y,g^{k-1}x)+\dist_U(g^{k-1}x,g^kx) \,\leq\, 48E.
\]

Moreover, if $j<0$, then no geodesic from $\pi_U(g^jx)$ to $\pi_U(x)$ can come $5E$--close to $g^{-k_0}y$, and hence cannot come $E$--close to $\rho^V_U$, by the choice of $k_0$. Lemma~\ref{lem:BGIA} thus implies that $\dist_V(g^jx,x)\le E$, and so (iii) holds. Item (iv) holds for a similar reason. Together with the assumption on $V_n$, these imply (v). The final item holds by items (iii) and (iv) and our $\nest$--maximal choice of  $V_n$.  
\end{proof}

Now fix $k$ and $V\pnest U$ as in the above lemma. Let $J$ denote the minimal natural number such that $\dist_U(x,g^Jx)>400E$. Note that, although $J$ is independent of $V$, in principle it may depend on $g$. The next two lemmas show that, in fact, $J$ is bounded above independently of $g$. We note that $J\ge12$ since $\dist_U(x,gx)\leq 14E$ and $12\cdot 14E<400E$. 

We shall consider the set of all $i$ such that $g^i\rho^V_U$ is approximately half-way between $\pi_U(x)$ and $\pi_U(g^Jx)$; see Figure \ref{fig:ISchematic} for a schematic of the situation. Precisely, let 
\[
\calI \,=\, \left\{i\in\Z\,:\,\frac J3<i\le k+i<\frac{2J}3\right\}.
\]

\begin{figure}
    \centering
    \def\svgwidth{4in}
    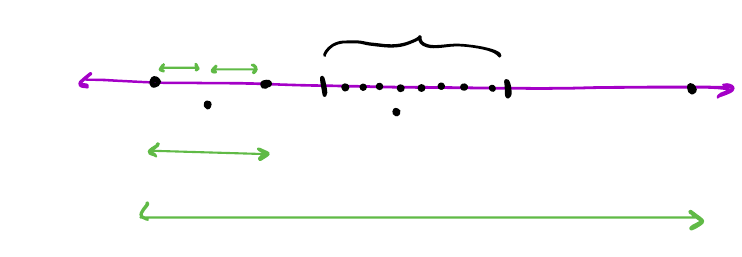
    \caption{A schematic of the sets $g^i\rho^V_U=\rho^{g^iV}_U$ when $i\in \mc I$ in $\mc CU$.}
    \label{fig:ISchematic}
\end{figure}

\begin{lemma} \label{lem:k<J}
$6k<J$, and $|\mathcal I|\geq\frac J{12}$.
\end{lemma}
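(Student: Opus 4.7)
The plan is to obtain both inequalities cleanly: the first by comparing upper and lower bounds on the stable translation length $\tau_U(g)$, and the second by a short interval-counting argument that uses the first.

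For $6k<J$, I would extract an upper bound on $\tau_U(g)$ from Lemma~\ref{lem:V}(i) and a lower bound from the definition of $J$. The standard inequality $\tau\le|\cdot|$ applied to $g^k$ at the point $\pi_U(x)\in\mathcal CU$ gives
\[
k\,\tau_U(g)\;=\;\tau_U(g^k)\;\le\;\dist_U(x,g^kx)\;\le\;50E.
\]
For the other direction, I would invoke the quantitative bound
\[
\dist_U(x,g^nx)\;\le\;n\,\tau_U(g)+13E
\]
that was derived in the proof of Corollary~\ref{cor:translation_bound} by approximating $\pi_U(x)$ by a near-minimiser of $\dist(\,\cdot\,,g^n\cdot)$ in the injective hull of $\mathcal CU$ and applying Lemma~\ref{lem:stable_unstable}. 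Specialising to $n=J$ and combining with $\dist_U(x,g^Jx)>400E$ yields $J\,\tau_U(g)>387E$. Eliminating $\tau_U(g)$ between the two displayed bounds gives $50J>387k$, so $J>\frac{387}{50}k>6k$.

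For $|\mathcal I|\ge J/12$, I would simply unpack the definition: $\mathcal I$ is exactly the set of integers $i$ lying in the open interval $(J/3,\,2J/3-k)$. This interval has length $J/3-k$, and the inequality $k<J/6$ just established upgrades this to a length greater than $J/6$. An open interval of length exceeding $L$ contains at least $\lceil L\rceil-1$ integers, so $|\mathcal I|\ge J/6-1$; the standing hypothesis $J\ge 12$ then yields $J/6-1\ge J/12$, as required.

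Neither part presents a serious obstacle; the work reduces to elementary arithmetic once the two bounds on $\tau_U(g)$ are in hand. The one point to watch is verifying that the numerical ratio $387/50>6$ really does leave enough slack to conclude $J>6k$ rather than some weaker inequality like $J>5k$, which would break the interval-counting step; this is why the thresholds $50E$ and $400E$ were chosen as they were in Lemma~\ref{lem:V}(i) and in the definition of $J$.
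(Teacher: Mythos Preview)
Your argument is correct, but the paper proves $6k<J$ more directly. Rather than passing through the stable translation length and the injective-hull estimate from the proof of Corollary~\ref{cor:translation_bound}, the paper simply applies the triangle inequality: since $\dist_U(x,g^kx)\le 50E$, iterating gives $\dist_U(x,g^{6k}x)\le 300E<400E$, and then minimality of $J$ forces $6k<J$. Your route works and is in some sense cleaner in separating the two-sided bound on $\tau_U(g)$, but it imports more machinery (the injective hull of $\mathcal CU$ and Lemma~\ref{lem:stable_unstable}) than is needed; the paper's argument uses nothing beyond the triangle inequality and the defining property of $J$. The second halves of the two proofs are essentially identical.
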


\begin{proof}
By definition, $|\calI|\geq \frac{J}{3}-k-1$. Moreover, the choice of $k$ gives $\dist_U(x,g^{6k}x)\le300E$. Because $J$ is minimal with $\dist_U(x,g^Jx)> 400E$, and because $\dist_U(x,gx)\le14E$, we have $6k<J$. This shows that $|\calI|\ge\frac J6-1$, and we are done because we are assuming that $J\ge12$.
\end{proof}

Given a number $C$, let $P(C)$ be the quantity given by the passing-up lemma, Lemma~\ref{lem:passingup}.

\begin{lemma} \label{lem:I_bounded}
$|\calI|<P(500E)$.
\end{lemma}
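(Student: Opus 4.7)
Proof plan. I would argue by contradiction, supposing $|\calI|\ge P(7E)$, and derive a contradiction by combining the distinctness of the orbit $\{g^iV\}_{i\in\calI}$ with the passing-up lemma and bounded geodesic image.

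First, for each $i\in\calI$, the domain $g^iV$ properly nests in $U$ (since $gU=U$) and satisfies $\dist_{g^iV}(x,g^Jx)>E$. Indeed, translating parts (iii), (iv), (v) of Lemma~\ref{lem:V} by $g^i$ gives $\dist_{g^iV}(x,g^ix)\le E$, $\dist_{g^iV}(g^{i+k}x,g^Jx)\le E$ (valid since $i+k<J$ by the definition of $\calI$), and $\dist_{g^iV}(g^ix,g^{i+k}x)>5E$; the triangle inequality then yields $\dist_{g^iV}(x,g^Jx)>3E$.

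Next, I would verify that the $g^iV$ for $i\in\calI$ are pairwise distinct. If $g^iV=g^{i'}V$ with $i\ne i'$, then equivariance together with $gU=U$ forces $g^{i-i'}$ to setwise preserve the bounded set $\rho^V_U\subset\calC U$, so the $\langle g^{i-i'}\rangle$-orbit of a point of $\rho^V_U$ in $\calC U$ is bounded; this contradicts the loxodromicity of $g$ on $\calC U$ from Proposition~\ref{prop:big-loxodromic}. Hence there are at least $P(7E)$ distinct domains properly nested in $U$, each with projection distance greater than $E$ between $x$ and $g^Jx$, and the passing-up lemma (Lemma~\ref{lem:passingup}) produces a domain $W$ with $g^{i_0}V\pnest W\pnest U$ for some $i_0\in\calI$ and $\dist_W(x,g^Jx)>7E$.

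For the contradiction, I would bound $\dist_W(x,g^Jx)$ above by $7E$. Applying $g^{i_0}$ to Lemma~\ref{lem:V}(vi) with the intermediate domain $g^{-i_0}W$ (so that $V\pnest g^{-i_0}W\pnest U$) gives $\dist_W(g^{i_0}x,g^{i_0+k}x)\le 5E$. The set $\rho^W_U$ lies within $E$ of $\rho^{g^{i_0}V}_U$, which in turn sits close to $g^{i_0}y$; by the construction in Lemma~\ref{lem:V}, the point $g^{i_0}y$ lies strictly between $g^{i_0}x$ and $g^{i_0+k}x$ on the quasi-axis $\alpha$ at $\calC U$-distance greater than $10E$ from each. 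Since $i_0\in\calI$, the points $x$ and $g^Jx$ lie on opposite sides of $g^{i_0}x$ and $g^{i_0+k}x$ along $\alpha$, so by the $E$-quasiconvexity of $\alpha$ and hyperbolicity of $\calC U$, any geodesic in $\calC U$ from $\pi_U(x)$ to $\pi_U(g^{i_0}x)$, and any geodesic from $\pi_U(g^{i_0+k}x)$ to $\pi_U(g^Jx)$, avoids an $E$-neighbourhood of $\rho^W_U$. The bounded geodesic image lemma (Lemma~\ref{lem:BGIA}) then gives $\dist_W(x,g^{i_0}x)\le E$ and $\dist_W(g^{i_0+k}x,g^Jx)\le E$, and the triangle inequality yields $\dist_W(x,g^Jx)\le E+5E+E=7E$, the required contradiction.

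The main obstacle is the hyperbolic geometry in the final step: I will have to track several additive constants (Morse constant for the quasi-axis $\alpha$, the wobble between $\rho^W_U$, $\rho^{g^{i_0}V}_U$, and $g^{i_0}y$, and the hyperbolicity constant of $\calC U$) to be sure that the relevant geodesics really do stay outside an $E$-neighbourhood of $\rho^W_U$ so that Lemma~\ref{lem:BGIA} is applicable.
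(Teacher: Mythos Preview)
Your proposal is correct and follows essentially the same approach as the paper: show $\dist_{g^iV}(x,g^Jx)>E$ for each $i\in\calI$, show the $g^iV$ are distinct via loxodromicity, apply passing-up to obtain $W$ with $g^{i_0}V\pnest W$, and derive a contradiction combining Lemma~\ref{lem:V} with bounded geodesic image. The only difference is the bookkeeping in the final step: the paper translates by $g^{-i_0}$ and works in $g^{-i_0}W$ (so that the relevant $\rho$--point sits near $\rho^V_U$ and one can invoke Lemma~\ref{lem:V}(ii) directly), obtaining $\dist_{g^{-i_0}W}(x,g^Jx)>5E$; you instead stay in $W$, use the translated version of (vi) for the middle segment and BGIA for the two outer segments, obtaining $\dist_W(x,g^Jx)\le7E$. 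These two packagings are equivalent under the $g^{\pm i_0}$ translation, and your version makes the contradiction (against the passing-up output $>7E$) slightly more transparent. The constant-tracking obstacle you flag is exactly the content of the paper's appeal to Lemma~\ref{lem:V}(ii) and the quasiconvexity of $\alpha$.
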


\begin{proof}
If $i\in\calI$, then $i>0$, so by Lemma~\ref{lem:V}(iv) we have $\dist_{g^iV}(x,g^ix)=\dist_V(g^{-i}x,x)\le E$. Similarly, $J-i>k$, and so by Lemma~\ref{lem:V}(iv), $\dist_{g^iV}(g^{k+i}x,g^Jx)=\dist_V(g^kx,g^{J-i}x)\le E$. By the triangle inequality and Lemma~\ref{lem:V}(v), we therefore have 
\[
\dist_{g^iV}(x,g^Jx)>\dist_{g^iV}(g^ix, g^{k+i}x)-2E>3E-2E = E.
\]

Because $g$ acts loxodromically on $\calC U$, no power can stabilise any bounded set. In particular, $\rho^V_U$ is not stabilised by any $g^n$, and hence the $g^iV$ are pairwise distinct. Thus, if $|\calI|\ge P(500E)$, then Lemma~\ref{lem:passingup} produces a domain $W\nest U$ such that $\dist_W(x,g^Jx)>500E$ and some $g^iV$ is properly nested in $W$. 

We first argue that $W\neq U$.  Indeed, if $W=U$, then $\dist_U(x,g^Jx)>500E$, so $\dist_U(x,g^{J-1}x)>485E$, contradicting that $J$ is the minimal natural number with $\dist_U(x,g^Jx)>400E$.  Thus $W\pnest U$.

Consistency implies that $\rho^W_U$ is $E$--close to $\rho^{g^iV}_U$. Consider the domain $g^{-i}W$, into which $V$ is properly nested.  Lemma~\ref{lem:V}(iii) implies that no geodesic from $\pi_U(g^{-i}x)$ to $\pi_U(x)$ can come $E$--close to $\rho^{g^{-i}W}_U$, and hence $\dist_W(g^{-i}x,x)\le E$ by Lemma~\ref{lem:BGIA}. Also $i<\frac{2J}3-k$, so by Lemma~\ref{lem:k<J} we have $J-i>3k$. Hence Lemmas~\ref{lem:BGIA} and \ref{lem:V}(iv) similarly imply that $\dist_{g^{-i}W}(g^{J-i}x,g^Jx)\le E$. It follows from the triangle inequality that $\dist_{g^{-i}W}(x,g^Jx)>498E>7E$, which contradicts Lemma~\ref{lem:V}. 
\end{proof}

\begin{corollary} \label{cor:big-V-gone}
If the supposition of Corollary \ref{cor:small-V-gone} fails, i.e. if there is some $V\pnest U$ and some $n\in\mathbb N_\eps$ with $\dist_V(x,g^nx)\ge D_0$, then $\tau_U(g)\ge\frac E{12P(500E)}$.
\end{corollary}

\begin{proof}
By Lemmas~\ref{lem:k<J} and~\ref{lem:I_bounded}, we see that there is a number $J\le12P(500E)$ such that $\dist_U(x,g^Jx)>14E$. The result follows from Corollary~\ref{cor:translation_bound}.
\end{proof}

Since $\frac{E}{12P(500E)}$ depends only on $(G,\s)$, this completes the proof of Theorem~\ref{thm:main}.
\hfill\qedsymbol

\section{\texorpdfstring{$\s$}{S}--translation discreteness}\label{sec:strong_18}

This section discusses the sharpness of Theorem~\ref{thm:main}. The discussion is aided by the following definition. Recall from Lemma~\ref{lem:Big_lemma} that $g^{c!}U=U$ for $g\in G$ and $U\in\vbig(g)$.

\begin{definition}\label{defn:strong-18}
A hierarchically hyperbolic group $(G,\mathfrak S)$ is \emph{$\mathfrak S$--translation discrete} if there exists $\tau_0>0$ such that for all infinite-order $g\in G$, we have $\tau_U(g^{c!})\geq \tau_0$ for all $U\in\vbig(g)$. 
\end{definition}

There are two ways in which $\s$--translation discreteness is stronger than the conclusion of Theorem~\ref{thm:main}. Firstly, Theorem~\ref{thm:main} only requires $\tau_U(g^{c!})$ to be uniformly bounded away from $0$ for \emph{some} $U\in\vbig(g)$. Secondly, it does not rule out the possibility that the same $U$ supports other elements $h\in G$ with $U\in\vbig(h)$ but $\tau_U(h^{c!})$ arbitrarily small.

The following example shows that Theorem~\ref{thm:main} is sharp, by exhibiting HHG structures that are not $\s$--translation discrete.  It also shows that a group $G$ can admit HHG structures $\s_1$ and $\s_2$ such that $G$ is $\s_1$--translation discrete but not $\s_2$--translation discrete.

\begin{example} \label{exmp:modified_abelian}
Let $\Z^2=\langle a,t\mid [a,t]\rangle$.  For each $\eps\in(0,1)$, we define an HHG structure $(\Z^2,\s_\eps)$ as follows.  
\begin{itemize}
\item 	$\s_\eps=\{S,U,V\}$, where $\mathcal CS$ is a point and $\mathcal CU$ and $\mathcal CV$ are copies of $\R$. 
\item   $U\perp V$ and $U,V\nest S$.
\item   $\Z^2$ acts trivially on the set $\s_\eps$.
\item   $\pi_V:\Z^2\to\calC V$ is defined by $\pi_V(a^pt^q)=p$, for $p,q\in\Z$. This linearly extends to an action of $\Z^2$ on $\calC V$.
\item   $\pi_U:\Z^2\to\calC U$ is defined by $\pi_U(a^pt^q)=(p+q)\eps-p$.
\item   $\pi_S\colon \Z^2\to\mathcal CS$, $\rho^U_S$, and $\rho^V_S$ are defined in the only possible way.
\end{itemize}

By construction we have $\tau_U(a^pt^q)=(p+q)\eps-p$. In particular, if $\eps$ is irrational, then $\tau_U$ takes arbitrarily small positive values, so $\Z^2$ is not $\s_\eps$--translation discrete.
\end{example}

Example~\ref{exmp:modified_abelian} also shows that the constant $\tau_0$ in Theorem~\ref{thm:main} is not merely dependent on the ``skeleton'' of the HHG structure: it depends in an essential way on the parameters that bind it together. Though most of the parameters do not vary with $\eps$, the \emph{uniqueness function} does \cite[Def.~1.1(9)]{behrstockhagensisto:hierarchically:2}. One can check that this makes the constant $A$ from the distance formula (Theorem~\ref{thm:DF}) be of the order of $\frac{1}{\eps}$.  Thus, although $\tau_{\Z^2}$ is independent of $\eps$, the constant given by Lemma~\ref{lem:proportion} degenerates as $\eps\to0$, and hence, via Corollary~\ref{cor:small-V-gone}, so does $\tau_0$.

The following variation shows how Theorem~\ref{thm:main} limits the type of behaviour seen in Example~\ref{exmp:modified_abelian}.

\begin{example}
Let $\delta\in(0,1)$. Starting from $\s_\eps$, redefine $\pi_V$ by setting $\pi_V(a^pb^q)=(p+q)\delta-q$. This gives an HHG structure $\s_{\delta,\eps}$ on $\Z^2$. For generic choices of $\delta$ and $\eps$, both $\tau_U$ and $\tau_V$ can take arbitrarily small values. Although this may appear to contradict the fact that every element of $\Z^2$ has translation length at least 1, Theorem~\ref{thm:main} reassures us that no element can simultaneously realise small values of $\tau_U$ and $\tau_V$. By varying $\delta$ and $\eps$ within a fixed interval away from 0, say $(\frac12,1)$, we obtain uncountably many HHG structures $\s_{\delta,\eps}$ on $\Z^2$ that are not translation discrete, but for which the constant $\tau_0$ is the same.
\end{example}
%

Of course, $\Z^2$ \emph{is} $\s$--translation discrete with respect to its most obvious HHG structure $\s$ (the case $\eps=1$). More interesting examples will be constructed in Section~\ref{subsec:small-hhg}: we produce HHGs $(G,\s)$ that are not $\s$--translation discrete, but for which we do not know whether there exists a structure $\s'$ such that $G$ is $\s'$--translation discrete. The examples will be central extensions of HHGs.

\subsection{Positive examples} 

Here we show that some well-known HHG structures are $\s$--translation discrete.

As observed by Bowditch in \cite[Lem.~2.2]{bowditch:tight} (or by Proposition~\ref{prop:undistorted}), acylindrical actions on a hyperbolic spaces are translation discrete (positive translation lengths are uniformly bounded away from zero). Together with \cite[Thm~14.3]{behrstockhagensisto:hierarchically:1}, this shows that if $(G,\s)$ is an HHG and $S\in\mathfrak S$ is the unique $\nest$--maximal element, then $\tau_S(g)$ is uniformly bounded below for $g\in G$ satisfying $\vbig(g)=\{S\}$. This falls short of $\s$--translation discreteness, but motivates the following terminology from \cite{durhamhagensisto:boundaries}. 

\begin{definition}
An action of a group $G$ on a metric space $X$ \emph{factors through an acylindrical action} if the image of $G\to\isom X$ acts acylindrically on $X$.

We say that $(G,\s)$ is \emph{hierarchically acylindrical} if, for all $U\in\mathfrak S$, the action of $\Stab_G(U)$ on $\mathcal CU$ factors through an acylindrical action.
\end{definition}
 
In view of the above discussion, we have the following.

\begin{lemma}\label{lem:hierarchically_acylindrical_case}
If $(G,\mathfrak S)$ is hierarchically acylindrical, it is $\s$--translation discrete.
\end{lemma}

Lemma~\ref{lem:hierarchically_acylindrical_case} covers the standard HHG structure $\s$ on $G$ the fundamental groups of a compact special cube complex \cite{behrstockhagensisto:hierarchically:1}. Indeed, each $\Stab_G(U)$ is virtually a direct product of virtually compact special groups (see, e.g., \cite[Lemma 3.11]{Zalloum:effective}), one of which inherits an HHG structure where $U$ is the $\nest$--maximal element. By \cite[Thm~14.3]{behrstockhagensisto:hierarchically:1}, $(G,\mathfrak S)$ is hierarchically acylindrical.

Many examples of HHGs are not hierarchically acylindrical (even many structures on $\Z^2$; see Example~\ref{exmp:modified_abelian}), but they may still be $\s$--translation discrete. For example, if $G$ is an irreducible lattice in a product of trees, as constructed in \cite{burgermozes:lattices,wise:complete,hughes:lattices}, then the standard structure is not hierarchically acylindrical \cite{durhamhagensisto:correction}, but every $\mathcal CU$ is a tree, so loxodromic elements have combinatorial geodesic axes, so $G$ is $\s$--translation discrete. Mapping class groups also provide examples, as we now clarify.

Let $S$ be a connected, orientable, hyperbolic surface $S$ of finite type. The mapping class group $\mcg(S)$ admits a hierarchically hyperbolic structure $(\mcg(S),\mathfrak S)$, described in \cite[\S11]{behrstockhagensisto:hierarchically:2} using results in \cite{masurminsky:geometry:1,masurminsky:geometry:2,behrstockkleinerminskymosher:geometry,behrstock:asymptotic},  where $\mathfrak S$ is the set of isotopy classes of essential (not necessarily connnected) non-pants subsurfaces. For each $U\in\mathfrak S$, the associated hyperbolic space is the curve graph $\calC U$.  

When $U$ is non-annular, the action of $\Stab(U)$ on $\mathcal CU$ factors through the action of $\mcg(U)$ on $\mathcal CU$, which is acylindrical \cite[Thm~1.3]{bowditch:tight}, so translation lengths of $\mathcal CU$--loxodromic elements of $\Stab(U)$ are uniformly bounded below in terms of the topology of $U$ (and hence in terms of the topology of $S$). The same is not true when $U$ is an annulus, however, in view of the following fact. Though it is well known, we have been unable to locate a reference for it. 

The annular curve graph $\mathcal C(\gamma)$ and the action are described in \cite[\S2]{masurminsky:geometry:2}; we let $\psi:\Stab_{\mcg(S)}(\gamma)\to \isom(\mathcal C(\gamma))$ denote this action. 

\begin{proposition}\label{prop:annular-not-acyl}
Let $S$ be a connected, orientable, finite-type hyperbolic surface. Let $\gamma$ be an essential simple closed curve on $S$, and let $\mathcal C(\gamma)$ be the associated annular curve graph.  The action of $\Stab_{\mcg(S)}(\gamma)$ on $\mathcal C(\gamma)$ does not factor through an acylindrical action.
\end{proposition}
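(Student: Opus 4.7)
The plan uses the elementary closure theorem for acylindrical actions on hyperbolic spaces: for any loxodromic element, the setwise stabilizer of its pair of endpoints at infinity is virtually cyclic. I will exhibit a loxodromic element whose image-stabilizer is manifestly not virtually cyclic, contradicting the existence of any factoring through an acylindrical action.

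The loxodromic element is $T_\gamma$, the Dehn twist about the boundary. It lies in the centre of $\mcg(S,\partial S)$ and acts on the quasi-line $\calC(\gamma)$ as a translation with positive stable translation length $\tau>0$, so its image $\bar T_\gamma$ is loxodromic in $\isom(\calC(\gamma))$. Since every element of $\mcg(S,\partial S)$ commutes with $T_\gamma$, the entire image of $\mcg(S,\partial S)$ is contained in the endpoint stabilizer $E := E(\bar T_\gamma^{\pm\infty})$. Should the action factor through an acylindrical action, $E$ would be virtually cyclic.

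To refute this, I invoke the Birman exact sequence obtained by capping $\gamma$ with a disc marked at $p$, via the identification $\mcg(\hat S, p) \cong \mcg(S,\partial S)/\langle T_\gamma\rangle$:
\[
1 \to \pi_1(\hat S, p) \to \mcg(\hat S, p) \to \mcg(\hat S) \to 1.
\]
By the positive genus hypothesis, $\pi_1(\hat S, p)$ contains a non-abelian free group, hence infinitely many distinct nontrivial conjugacy classes of loops. For each such loop $\sigma$, the point-pushing map factors as $P_\sigma = T_{c_L(\sigma)} T_{c_R(\sigma)}^{-1}$, where $c_L(\sigma), c_R(\sigma)$ are the boundary curves of a regular neighbourhood of $\sigma$ in $\hat S$; these lie in $S \setminus \gamma$ and therefore have zero fractional Dehn twist coefficient around $\gamma$. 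Consequently, any lift of $P_\sigma$ to $\mcg(S,\partial S)$ acts on $\calC(\gamma)$ with bounded orbits, i.e.\ as an elliptic isometry commuting with $\bar T_\gamma$.

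Taking infinitely many such $\sigma$, one obtains infinitely many elliptic isometries in $E$, contradicting virtual cyclicity. The main obstacle is verifying that these point-pushing lifts produce pairwise distinct elements of $\isom(\calC(\gamma))$, equivalently, that the homomorphism $\mcg(S) \to \isom(\calC(\gamma))$ restricted to the lifted point-pushing subgroup has infinite image. This is the delicate step: it requires a direct analysis of how distinct $P_\sigma$'s permute arcs in the compactified annular cover of $\gamma$, invoking subsurface-projection equivariance together with the fact that different $\sigma$'s give Dehn twists about homotopically distinct simple closed curves in $S$, hence act distinctly on families of arcs traversing their supports.
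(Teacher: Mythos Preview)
Your opening reduction is sound and essentially matches the paper's first move: if the action factored through an acylindrical one, then since the central element $T_\gamma$ is loxodromic on the quasi-line $\calC(\gamma)$ and is centralised by all of $\mcg(S)$, the image $\psi(\mcg(S))$ would have to be virtually cyclic (the paper cites \cite[Lem.~6.7]{dahmaniguirardelosin:hyperbolically} for this).

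Where your argument diverges is in deriving the contradiction, and here there is a genuine gap. First, a logical slip: ``infinitely many distinct elliptic isometries'' does \emph{not} by itself contradict virtual cyclicity---the infinite dihedral group has infinitely many torsion elements. What you actually need is an infinite \emph{subgroup} consisting of elliptics, which would intersect the finite-index $\Z$ trivially and hence be finite. Since the lifted point-pushes do generate a subgroup, what you must establish is that the image of the point-pushing subgroup in $\isom(\calC(\gamma))$ is infinite. You correctly flag this as the ``main obstacle'', but you do not resolve it, and the sketch you give (``distinct $\sigma$'s give Dehn twists about homotopically distinct curves, hence act distinctly on arcs'') is not a proof: distinct mapping classes can easily induce the same isometry of $\calC(\gamma)$, and indeed analysing the kernel of $\mcg(S)\to\isom(\calC(\gamma))$ is precisely the hard part.

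The paper avoids this entirely with a cohomological argument. It observes that if $\psi(\mcg(S))$ is virtually cyclic, then the preimage $G'_S=\psi^{-1}(\psi(\langle T_\gamma\rangle))$ has finite index in $\mcg(S)$ and retracts onto $\langle T_\gamma\rangle$, so $G'_S\cong\langle T_\gamma\rangle\times N$ with $N$ mapping injectively to a finite-index subgroup of $\mcg(S_0)$ (here $S_0$ is the once-punctured closed surface obtained by capping). By a standard lemma on central extensions, this virtual splitting forces the Euler class of the capping extension to have finite order in $H^2(\mcg(S_0),\Z)$, contradicting the well-known fact that it has infinite order. This argument never touches the geometry of $\calC(\gamma)$ beyond the initial virtual-cyclicity reduction, which is what makes it clean; your approach, by contrast, would require a direct computation of the action on the annular cover that you have not carried out.
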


Before the proof, we have a lemma, retaining the notation from the proposition.

\begin{lemma}\label{lem:annular-kernel}
 The group $\ker\psi$ is finite.
\end{lemma}

\begin{proof}
Let $a_1,\ldots,a_k$ be a collection of non-isotopic curves such that each $a_i$ has intersection number at least one with $\gamma$, and $\{a_1,\ldots,a_k,\gamma\}$ fills $S$.  Let $p:S_1\to S$ be the annular cover associated to $\gamma$.  Let $\widetilde S\to S_1$ be the universal cover, and identify $\widetilde S$ with $\mathbb H^2$ by pulling back any hyperbolic metric on $S$.  Then the action of $\langle \gamma\rangle$ on $\widetilde S$ by deck transformations extends to an action on $\widetilde S\cup\partial\widetilde S$, and the quotient $\bar S_1$ is a closed hyperbolic annulus with interior $S_1$.  For each $i$, let $\hat a_i\colon\mathbb R\to S_1$ be an embedding whose image is a component of $p^{-1}(a_i)$ crossing the curve $\hat \gamma$ that lifts $\gamma$ and extending to a properly embedded arc $\bar a_i\to \bar S_1$ joining the two boundary circles.  So, each $\bar a_i$ represents a vertex of $\mathcal C(\gamma)$.  

Recall how each $g\in \Stab_{\mcg(S)}(\gamma)$ acts on $\bar a_i$: by the lifting criterion, $g:S\to S$ lifts to a homeomorphism $\hat g:S_1\to S_1$, which extends to a homeomorphism $\bar g:\bar S_1\to \bar S_1$, and $\bar g(\bar a_i)$ is another properly embedded arc, and it represents the vertex $\psi(g)(\bar a_i)$ of $\mathcal C(\gamma)$.  Thus far, we just summarised the construction in \cite[Sec. 2]{masurminsky:geometry:2}.

Now, if $g\in\ker\psi$, then $\psi(g)(\bar a_i)$ is isotopic rel endpoints to $\bar a_i$, which implies that $g\in\Stab_{\mcg(S)}(a_i)$.  Since this holds for all $i$, we have that $g$ stabilises each curve in a filling collection of curves, and we are done.
\end{proof}

\begin{proof}[Proof of Proposition \ref{prop:annular-not-acyl}]
For a surface $U$, write $G_U=\mcg(U)$.  Let $S_0=S-\gamma$ (we emphasise that we allow $S_0$ to be disconnected).  Let $H=\Stab_{G_S}(\gamma)$.  Then the action of $H$ on $S_0$ gives a homomorphism $\phi\colon H\to G_{S_0}$.  Let $G'_{S_0}\leq G_0$ be the finite index subgroup fixing all punctures in $S_0$, and let $H'=\phi^{-1}(G'_{S_0})$.  This gives a central extension
\[
1\to T\hookrightarrow H'\stackrel{\phi}{\longrightarrow} G_{S_0}'\to 1,
\]  
where $T$ is an infinite cyclic subgroup generated by a mapping class $\alpha$ that is a power of the Dehn twist about $\gamma$ \cite{birmanlubotzkymccarthy:abelian}.  

Let $\psi:H'\to\isom(\mathcal C(\gamma))$ be the action on the annular curve graph from \cite{masurminsky:geometry:2}.  Suppose that the action of $\psi(H')$ on $\mathcal C(\gamma)$ is acylindrical.  Then since $\psi(t)$ acts loxodromically and $\mathcal C(\gamma)$ is quasi-isometric to $\mathbb R$ (see \cite[\S2]{masurminsky:geometry:2}), the subgroup $\psi(T)$ has finite index in $\psi(G_S)$, by \cite[Lem.~6.7]{dahmaniguirardelosin:hyperbolically}.  

Let $H''=\psi^{-1}(\psi(T))$, which has finite index in $H'$. Since $\psi|_T$ is injective, the map $r\colon g\mapsto\psi|_T^{-1}(\psi(g))$ is a retraction of $H''$ onto $T$. Let $N=\ker(r)$.  Since $N\cap T$ is trivial and $T$ is central in $H''$, we have $H''=T\times N$. Also, $\phi|_N\colon N\to G_{S_0}'$ is injective and has finite-index image; in particular, $N$ is infinite.  

On the other hand, suppose that $g\in N$.  Then $\psi(g)\in\psi(T)$ since $N\leq H''$.  Moreover, $\psi(g)=\psi|_T(r(g))$ is trivial since $g\in N$.  Hence Lemma \ref{lem:annular-kernel} implies that $|N|<\infty$, a contradiction.
%
%
%
%
%
%
\end{proof}

\begin{remark}[Boundary curve variant]\label{rem:capping-homomorphism}
If $S$ is a surface with no punctures and one boundary component $\gamma$, then one can define $\mathcal C(\gamma)$ similarly to \cite{masurminsky:geometry:2}, using arcs in $S$ with at least one endpoint on $\gamma$ as vertices.  In this case, $\mcg{S}$ is a central extension of $\mcg{S'}$, where $S'$ is the corresponding punctured surface, and the central quotient comes from the \emph{capping homomorphism} (see \cite[Prop.~3.19]{farbmargalit:primer}, for instance).  In this case, a similar argument shows that the $\mcg{S}$--action on $\mathcal C(\gamma)$ does not factor through an acylindrical action: if it did, then the extension would virtually split, as in the proof of Proposition \ref{prop:annular-not-acyl}, and this would contradict that the Euler class has infinite order in $H^2(\mcg{S'},\mathbb Z)$ by \cite[\S5.5.6]{farbmargalit:primer}.
\end{remark}

Despite  Proposition~\ref{prop:annular-not-acyl}, which says that $(\mcg(S),\mathfrak S)$ is not hierarchically acylindrical, we still have the following. 


\begin{proposition}
With the standard HHG structure $\s$, mapping class groups are $\s$--translation discrete.
\end{proposition}

\begin{proof}[Sketch.]
As noted above, if $U$ is a non-annular subsurface of $S$, then the $\Stab_{\mcg{S}}(U)$--action on $\mathcal CU$ factors through the acylindrical action of $\mcg{U}$, and we are done.  

It therefore remains to consider the case where $U$ is an annulus, whose core curve we denote $\gamma$.  Specifically, we have to produce a constant $\tau>0$ such that if $g\in\Stab_{\mcg{S}}(\gamma)$, and $g$ acts on $\mathcal C(\gamma)$ loxodromically, then $\tau_{\mathcal C(\gamma)}(g)\geq \tau$. 

We now sketch an argument due to Sam Nead and Lee Mosher\footnote{See \url{https://mathoverflow.net/questions/439665/translation-length-on-annular-curve-graphs}.}; we are also grateful to Juan Souto for explaining some details to us.

First, by passing to a positive power bounded in terms of the complexity of $S$ only, we can assume that $g$ stabilises each oriented subsurface of $S-\gamma$ arising as a component of the complement of the canonical reducing system for $g$.  

First suppose that $g$ is the product of powers of Dehn twists about its reducing curves (including $\gamma)$.  In this case, one argues that there exists $B$, depending only on $S$, such that for all Dehn twists $h$ about curves disjoint from $\gamma$, and all $x\in \mathcal C(\gamma)$, we have $\dist_{\mathcal C(\gamma)}(x,hx)\leq B$.  If $t$ is the twist about $\gamma$, we thus have $g=t^kh$, where $h$ is the product of powers of twists about a uniformly bounded number of disjoint curves and $k\in\mathbb Z$, so $\dist_{\mathcal C(\gamma)}(x,g^nx)\geq \dist_{\mathcal C(\gamma)}(x,t^{kn}x)-B'$, where $B'$ is independent of $n$, and hence $\tau_{\mathcal C(\gamma)}(g)=\tau_{\mathcal C(\gamma)}(t)^k$, and we are done.

The remaining case is where $g$ acts as a pseudo-Anosov on at least one component of the complement of its reducing curves.  On each such complementary component, there is a stable train track for the restriction of $g$ to the given component, and the number of switches is bounded in terms of the complexity of $S$ (by an Euler characteristic argument; see for instance \cite[Lem. 3.2.1]{Mosher:train}), so by replacing $g$ with a uniform positive power, we can assume that $g$ fixes all of the switches.  Now we have a neighbourhood of $\gamma$ in $S$ which is a subsurface with some cusps on its boundary, and this surface is preserved by $g$, with the action fixing the cusps.  Now choose $x\in\mathcal C(\gamma)$ to be a vertex represented by an arc intersecting $\gamma$ and terminating on each side of $\gamma$ at a boundary cusp of this neighbourhood.  Thus $gx$ is another such arc with the same endpoints, and therefore isotopic rel endpoints to an arc representing a vertex of $\mathcal C(\gamma)$ in the orbit $\langle t\rangle \cdot x$.  Hence $g$ acts on $x$ like a power of the Dehn twist $t$, whose translation length thus bounds that of $g$ from below.
\end{proof}

\subsection{Quasimorphisms, central extensions, and bounded classes}\label{subsec:central-extension-notation} 

Here we recall some facts needed for the construction of HHG structures that are not $\s$--translation discrete in Section~\ref{subsec:small-hhg}. We refer the reader to \cite[Ch.~IV.3]{brown:cohomology} for more detailed background on central extensions, and \cite{calegari:scl,frigerio:bounded} for quasimorphisms.

Let $\Gamma$ be a group, and let $R\in\{\Z,\R\}$. A \emph{quasimorphism} is a map $q\colon \Gamma\to R$ such that there exists $D<\infty$ for which 
$$|q(g)+q(h)-q(gh)|\leq D$$
for all $g,h\in\Gamma$.  The infimal $D$ for which this holds is the \emph{defect} of $q$, denoted $D(q)$.  A quasimorphism $q$ is \emph{homogeneous} if $q(g^n)=nq(g)$ for all $g\in\Gamma$ and $n\in\mathbb Z$.  Given any quasimorphism $q$, the \emph{homogenisation} $\hat q\colon \Gamma\to\mathbb R$ is the homogeneous quasimorphism given by
\[
\hat q(g)=\lim_{n\to\infty}\frac{q(g^n)}{n},
\]
which has defect at most $2D(q)$.

For a group $G$, we consider central extensions 
\[
1\to \mathbb Z\to E\stackrel{\phi}{\longrightarrow} G\to 1.
\]
We always use $t$ to denote a generator of the kernel of $\phi$. The group $E$ is determined up to isomorphism by a cohomology class $[\alpha]\in H^2(G,\mathbb Z)$ (viewing $\mathbb Z$ as a trivial $\mathbb ZG$--module). More precisely, letting the $2$--cocycle $\alpha\colon G^2\to\Z$ represent $[\alpha]$, there is an isomorphism $\psi_\alpha\colon E\to E_\alpha$, where $E_\alpha$ has underlying set $G\times \mathbb Z$ and group operation $*_\alpha$ given by
\[
(g,p)*_\alpha(h,q)=(gh,p+q+\alpha(g,h));
\]
see, e.g., \cite[p. 91--92]{brown:cohomology}.  We always assume $\alpha$ is \emph{normalised}, i.e., $\alpha(g,1)=\alpha(1,g)=0$ for all $g\in G$,  which is used implicitly in defining $*_\alpha$ but not needed later.

The extension $E$ is said to \emph{arise from a bounded class} if we can moreover take $\alpha$ to be bounded as a function to $\Z$. In this case, $E$ is quasi-isometric to $G\times\R$ \cite[Thm~3.1]{gersten:bounded}. We are interested in certain quasimorphisms on such $E$. 

First, consider the map $q_\alpha=\eta\psi_\alpha\colon E\to\mathbb Z$, where $\eta\colon E_\alpha\to\mathbb Z$ is the natural projection to the second factor.  (We emphasise that $q_\alpha$ is just a map of the underlying sets, not a homomorphism.) As observed in \cite[Lem.~4.1]{hagenrussellsistospriano:equivariant} and \cite[Lem.~4.3]{hagenmartinsisto:extra}, $q_\alpha$ is a quasimorphism with $q_\alpha(t^n)=n$ for all $n\in\Z$ (perhaps after inverting $t$), by boundedness of $\alpha$.

Let $\hat q_\alpha\colon E\to\R$ be the homogenisation of $q_\alpha$.  For each infinite-order $g\in G$, the subgroup $P_g=\phi^{-1}(\langle g\rangle)$ is isomorphic to $\Z^2$ and contains $t$.  The quasimorphism $\hat q_\alpha$ restricts to a homogeneous quasimorphism $\hat q_\alpha\colon P_g\to \R$, and, since $P_g$ is abelian, $\hat q_\alpha|_{P_g}$  is a homomorphism by \cite[Prop.~2.65]{calegari:scl}.  We will use this homomorphism to choose an element $\bar g\in P_g$ and a constant $\kappa_g\in\Z$, which will be useful in the next section.

The rank of $\ker(\hat q_\alpha|_{P_g})$ is $0$ or $1$.  Suppose the kernel is nontrivial and choose a generator $\bar g$ of $\ker(\hat q_\alpha|_{P_g})$.  Hence there is a unique pair of integers $\kappa_g,\theta_g$ such that $\psi_\alpha(\bar g)=(g^{\kappa_g},\theta_g).$  Since $\hat q_\alpha(t)=1$, we have $\kappa_g\neq 0$. On the other hand, if $\hat q_\alpha\colon P_g\to\R$ is injective, choose $\bar g\in P_g-\langle t\rangle$ arbitrarily and let $\kappa_g=0$.

For any homogeneous quasimorphism $\hat p\colon G\to \mathbb R$ on $G$, the map $\hat p\phi\colon E\to \mathbb R$ is a homogeneous quasimorphism with $\hat p\phi(t)=0$. Hence 
$\hat r = \hat q_\alpha + \hat p\phi$
is a homogeneous quasimorphism on $E$, and $\hat r(t)=1$ since $\hat p\phi(t)=\hat p(1)=0$ by homogeneity of $\hat p$.

\subsection{Quasimorphisms taking arbitrarily small values} \label{subsec:brooks} 

We present two constructions of quasimorphisms on $\Z$--central extensions of groups. The first is simpler, whereas the second, which is similar to that in \cite{bestvinabrombergfujiwara:verbal}, yields more information.  There are various other constructions that could be used instead.

\subsubsection{Generalising Example~\ref{exmp:modified_abelian}} \label{subsubsec:dense-image-in-R} ~

Let $G$ be an arbitrary group admitting a nontrivial homogeneous quasimorphism $\hat p\colon G\to\R$.  Fix $g\in G$ with $\hat p(g)\neq 0$.  By homogeneity, $g$ must have infinite order, and by rescaling we can assume that $\hat p(g)=1$.  

Let $\phi\colon E\to G$ be a $\Z$--central extension arising from a bounded cocycle $\alpha$, and let $q_\alpha\colon E\to \Z$ be the quasimorphism $q_\alpha=\eta\psi_\alpha$ considered in Section~\ref{subsec:central-extension-notation}. Let $\hat q_\alpha$ be its homogenisation, and, given $\delta,\epsilon\geq0$, let $\hat r=\delta\hat q_\alpha+\epsilon\hat p\phi.$  Recall that $\hat r$ is a homomorphism on $P_g\cong\Z^2$, and note that $\hat r(t)=\delta \hat q_\alpha(t)=\delta$.  If $\hat q_\alpha$ is non-injective on $P_g$, then, in the notation of Section~\ref{subsec:central-extension-notation}, we have $\hat r(\bar g)=\epsilon\kappa_g$.  

Hence, if, for instance, $(\delta,\epsilon)=(1,\sqrt{2})$, then the map $\hat r$ takes arbitrarily small positive values on $P_g$ and therefore on $E$.  If $\hat q_\alpha$ is injective on $P_g$, then we can take $\delta=1,\eps=0$.  In this case, $\hat r(t)=1$, so by injectivity, $\hat r(\bar g)$ is irrational for the choice of $\bar g$ above, and thus $\hat r(P_g)$ is dense, so $\hat r$ takes arbitrarily small positive values on $E$.

\subsubsection{Combinations of Brooks quasimorphisms}\label{subsubsec:brooks} ~

Let $G$ be a finitely generated group admitting a nonelementary acylindrical action on a hyperbolic geodesic metric space. By \cite[Thm~6.14]{dahmaniguirardelosin:hyperbolically}, there exist $a,b\in G$ such that: $\langle a,b\rangle=F$ is a free group; $G$ has a maximal finite normal subgroup $N$; we have $\langle N,a,b\rangle\cong N\times F$; and $N\times F$ is \emph{hyperbolically embedded} in $G$.

Given a reduced, cyclically reduced word $w\in F$, define $\#_w\colon F\to\mathbb R$ by letting $\#_w(x)$ be the maximum cardinality of a set of disjoint subwords of $x$, each of which is equal to $w$. The \emph{small Brooks quasimorphism} $h_w\colon F\to\Z$ is given by $h_w(x)=\#_w(x)-\#_{w^{-1}}(x)$ \cite{brooks:some}.  By \cite[Prop.~2.30]{calegari:scl}, $h_w$ is a quasimorphism with defect at most $2$.

Define $g_i=(a^ib^i)^{101}$. This concrete choice is somewhat arbitrary, but satisfies certain small-cancellation conditions, as in \cite{bowditch:continuously,thomasvelickovic:asymptotic}. Observe that $g_i$ is not a subword of $g_j^{\pm n}$ if $j\neq i$, nor is it a subword of $g_i^{-n}$. This shows that the corresponding small Brooks quasimorphisms satisfy $h_{g_i}(g_i^n)=n$ and $h_{g_i}(g_j^n)=0$ for all $j\ne i$.

Let $(\lambda_i)_{i=1}^\infty$ be a sequence of nonzero real numbers with $\sum_{i=1}^\infty|\lambda_i|<\infty$.  Define 
\[
p_F \,=\, \sum_{i=1}^\infty\lambda_ih_{g_i}.
\]
Observe that this sum is finite for all $x\in F$, because $h_{g_i}(x)=0$ if $|g_i|>|x|$. Thus, because the $h_{g_i}$ are quasimorphisms with defect at most $2$, the map $p_F$ is a quasimorphism with defect at most $2\sum_i|\lambda_i|<\infty$. The homogenisation $\hat p_F$ of $p_F$ satisfies $\hat p_F(g_i)=\lambda_i$ for all $i$; in particular, $|\hat p_F|$ takes arbitrarily small positive values.

Extend $\hat p_F$ over $N\times F$ by declaring $\hat p_F$ to vanish on $N$. Viewed as a 1--cocycle, $\hat p_F$ is \emph{antisymmetric} (by virtue of being homogeneous).  Since $N\times F$ is hyperbolically embedded in $G$,  \cite[Thm~1.4]{hullosin:induced} provides a quasimorphism $p\colon G\to\mathbb R$ such that 
\[
L \,=\, \sup_{x\in N\times F}|\hat p_F(x)-p(x)| \,<\, \infty.
\]
The homogenisation $\hat p\colon G\to\R$ satisfies $\hat p|F=\hat p_F$. In particular, $\hat p(g_i)=\lambda_i$ for all $i$, so $|\hat p|$ takes arbitrarily small positive values on $G$.

\subsubsection{Summary} ~

We can now prove the following, which will let us construct HHG structures that are not $\s$--translation discrete.

\begin{proposition}\label{prop:acyl-quasi}
Let $\phi\colon E\to G$ be a $\mathbb Z$--central extension, associated to a bounded cohomology class, of a group $G$ that admits a nontrivial homogeneous quasimorphism.  There exists a homogeneous quasimorphism $\hat r\colon E\to\mathbb R$ such that
\begin{itemize}
    \item $\hat r(t)=1$, and
    \item for all $\epsilon>0$, there exists $e\in E$ such that $\hat r(e)\in(0,\epsilon)$.
\end{itemize}
Moreover, if $G$ has a nonelementary acylindrical action on a hyperbolic space, then $\hat r$ can be chosen with $\lim_{i\to\infty}\hat r(e_i)=0$, where $(\phi(e_i)_i)_i$ is some sequence of loxodromic elements of $G$.
\end{proposition}

\begin{proof}
As explained in Section~\ref{subsubsec:dense-image-in-R}, there exists $\hat r$ satisfying the itemised properties as soon as $G$ admits a nontrivial homogeneous quasimorphism.  

If $G$ is acylindrically hyperbolic, then we can make a more specific choice of $\hat r$ as follows.  First, let $(g_i)_i$ be loxodromic elements of $G$ chosen as in Section~\ref{subsubsec:brooks}.  For each $i$, let $\kappa_{g_i}$ be the integer chosen above by considering the restriction of $\hat q_\alpha$ to $P_{g_i}$, and let $\bar g_i\in P_{g_i}$ be the associated element.  For each $i$, if $\kappa_{g_i}=0$, let $\lambda_i=0$, and otherwise let $\lambda_i=\frac{1}{2^i\kappa_{g_i}}$.  Let $\hat p\colon G\to\R$ be the resulting homogeneous quasimorphism from Section~\ref{subsubsec:brooks}.  

Now let $\hat r=\hat q_\alpha+\hat p\phi$.  As before, $\hat r(t)=1$.  Now, for each $i$ such that $\kappa_{g_i}\neq0$, we chose $\bar g_i$ such that $\hat q_\alpha(\bar g_i)=0$ and we chose $\kappa_{g_i}$ so that $\hat p\phi(\bar g_i)=\kappa_{g_i}\hat p(g_i)$.  Hence $\hat r(\bar g_i)=\frac{1}{2^i}$.

If $\kappa_{g_i}=0$, then $\hat p(g_i)=\lambda_i=0$, so $\hat p\phi$ vanishes on $P_{g_i}$, so $\hat r=\hat q_\alpha$ on $P_{g_i}$.  Also, in this case, $\hat q_\alpha$ is an injective homomorphism on $P_{g_i}$, and $\bar g_i$ was chosen outside of $\langle t\rangle$, and thus $\hat q_\alpha(\bar g_i)\not\in\mathbb Q$.  Thus, by applying powers of $t$, we can assume $0<\hat r(\bar g_i)\leq \frac{1}{2^i}.$

Observing that $\phi(\bar g_i)$ is a nonzero power of $g_i$, we are done, taking $e_i=\bar g_i$.
\end{proof}

\subsection{HHG constructions}\label{subsec:small-hhg} 

Here we construct HHG structures that are not $\s$--translation discrete. The next lemma is \cite[Lem.~4.15]{abbottbalasubramanyaosin:hyperbolic}, except that we have extracted an additional consequence of their proof.

\begin{lemma}[Quasilines from quasimorphisms]\label{lem:quasiline}
Let $\Gamma$ be a group and let $\hat s\colon \Gamma\to\mathbb R$ be a nontrivial homogeneous quasimorphism.  There exists a graph $L$, quasi-isometric to $\mathbb R$, and a vertex-transitive, isometric action of $\Gamma$ on $L$ that fixes both ends of $L$.  Moreover, there exists $K$ such that for all $g\in\Gamma$ we have
\[
\frac1K|\hat s(g)| \,\leq\, \tau_L(g) \,\leq\, K|\hat s(g)|.
\]
\end{lemma}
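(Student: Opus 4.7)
The plan is to realise $L$ as a generalised Cayley graph for $\Gamma$: take the vertex set to be $\Gamma$ itself, and declare $g,h$ to span an edge precisely when $g\neq h$ and $|\hat s(g^{-1}h)|\le C$, for a constant $C$ to be chosen large relative to the defect $D=D(\hat s)$. Homogeneity of $\hat s$ gives $\hat s(x^{-1})=-\hat s(x)$, so the edge relation is symmetric; and the left $\Gamma$--action on $\Gamma$ is then automatically vertex-transitive and isometric on $L$.

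The main task is to verify that $\hat s$ descends to a quasi-isometry $L\to\R$. The easy direction is the upper bound on differences: iterating the quasimorphism inequality along any edge-path $g=g_0,g_1,\dots,g_n=h$ in $L$ yields
\[
|\hat s(g)-\hat s(h)| \,\le\, n(C+D)+D \,\le\, (C+D)\dist_L(g,h)+D.
\]
For the reverse inequality, I would fix some $g_0\in\Gamma$ with $\hat s(g_0)\ne 0$ (which exists by the nonvanishing hypothesis) and take $C\ge D+|\hat s(g_0)|$. Given $g\in\Gamma$, choosing $n$ to be the integer closest to $\hat s(g)/\hat s(g_0)$, a single application of the quasimorphism inequality combined with homogeneity yields $|\hat s(g_0^{-n}g)|\le D+|\hat s(g_0)|\le C$, so that $g$ is adjacent to $g_0^n$ in $L$. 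Since consecutive powers of $g_0$ are themselves adjacent in $L$, this produces a path from $1$ to $g$ of length at most $|n|+1$, bounded above by a constant multiple of $|\hat s(g)|+1$. Together these estimates show $\hat s\colon L\to\R$ is a quasi-isometry, from which connectedness of $L$ and two-endedness fall out automatically.

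The remaining statements follow quickly. The quasimorphism inequality $|\hat s(hg)-\hat s(g)-\hat s(h)|\le D$ shows that left multiplication by any fixed $h$ shifts $\hat s$ by a bounded amount, so it preserves each of the two ends of $L$ (which correspond to $\hat s\to\pm\infty$ via the quasi-isometry). For the translation length bounds, homogeneity gives the exact identity $\hat s(g^n)=n\hat s(g)$; combining this with the QI estimate yields constants $K_0,C_0$ with
\[
K_0^{-1}n|\hat s(g)| - C_0 \,\le\, \dist_L(1,g^n) \,\le\, K_0 n|\hat s(g)| + C_0,
\]
and dividing by $n$ and letting $n\to\infty$ produces a constant $K$ as required. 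The step likely to require the most care is the calibration of $C$ against $D$ and $|\hat s(g_0)|$ so that both directions of the QI bound hold cleanly; once this is arranged, everything else is a direct consequence of the quasimorphism axioms and homogeneity.
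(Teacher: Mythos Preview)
Your proof is correct and follows essentially the same approach as the paper: both take $L=\cay(\Gamma,\mathcal A)$ with $\mathcal A=\{g:|\hat s(g)|\le C\}$ for a suitably large $C$, and derive the quasi-isometry to $\R$ and the translation-length bounds from the quasimorphism inequality together with homogeneity. The only difference is that the paper outsources the verification that $L$ is a quasiline and the explicit distance estimates to \cite[Lem.~4.15]{abbottbalasubramanyaosin:hyperbolic}, whereas you supply these details directly; your calibration $C\ge D+|\hat s(g_0)|$ and the path through powers of $g_0$ amount to the same argument carried out in that reference.
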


\begin{proof}
Fix any positive number $C_0$ such that there is some $g_0\in\Gamma$ with $|\hat s(g_0)|=C_0$.

Let $C\ge2D(\hat s)$ be such that there is some $g\in\Gamma$ with $\hat s(g)\in(0,C/2)$. According to \cite[Lem.~4.15]{abbottbalasubramanyaosin:hyperbolic}, the set $\mathcal A$ of $g\in\Gamma$ such that $|\hat s(g)|<C$ generates $\Gamma$. Let $L=\cay(\Gamma,\calA)$. As explained in \cite{abbottbalasubramanyaosin:hyperbolic}, $L$ is quasi-isometric to $\mathbb R$, and the action of $\Gamma$ fixes the ends of $L$. The proof of \cite[Lem.~4.15]{abbottbalasubramanyaosin:hyperbolic} shows that 
\[
\frac{2C|\hat s(g)|}{3} \,\leq\, \dist_{L}(1,g) \,\leq\, \frac{|\hat s(g)|}{C_0}+2,
\]
for all $g\in \Gamma$, from which the statement about translation lengths follows.
\end{proof}

\begin{remark}\label{rem:other-quasilines}
One could deduce Lemma~\ref{lem:quasiline} from \cite[Prop.~3.1]{manning:quasiactions}; we thank Alice Kerr for this observation.  A more general statement \cite[Cor.~1.1]{kleinerleeb:induced} about quasi-actions  also works.
\end{remark}

The following lemma is extracted from the proof of \cite[Cor.~4.3]{hagenrussellsistospriano:equivariant}.

\begin{lemma}\label{lem:quasi-product}
Let $\phi\colon E\to G$ be a $\mathbb Z$--central extension of a finitely generated group  $G$. Suppose that $E$ acts by isometries on a graph $L$ that is quasi-isometric to $\mathbb R$.  Suppose further that $\tau_L(t)>0$, where $t$ generates $\ker\phi$. When $G\times L$ is equipped with the $\ell^1$--metric, the diagonal action of $E$ is proper and cobounded.
\end{lemma}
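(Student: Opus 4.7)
The plan is to verify coboundedness and properness separately, using that $E$ acts on $G$ on the left by $e\cdot g=\phi(e)g$, an isometric action with respect to any word metric on $G$, so that the diagonal action $e\cdot(g,\ell)=(\phi(e)g,e\ell)$ is by isometries on $(G\times L,\ell^1)$. The key leverage is that, since $L$ is quasi-isometric to $\R$ and $\tau_L(t)>0$, the cyclic group $\langle t\rangle$ acts coboundedly on $L$: a $\langle t\rangle$--orbit is a bi-infinite quasi-geodesic, which is at bounded Hausdorff-distance from all of $L$. So there is a constant $K$, depending only on the quasi-isometry constants for $L\simeq\R$ and on $\tau_L(t)$, such that every point of $L$ lies within $K$ of $\langle t\rangle\ell_0$ for any fixed vertex $\ell_0$.

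For coboundedness, I would fix a basepoint $(1,\ell_0)\in G\times L$. Given an arbitrary point $(g,\ell)$, I first pick $e_0\in\phi^{-1}(g^{-1})$, so that $e_0\cdot(g,\ell)=(1,e_0\ell)$. Then, using coboundedness of $\langle t\rangle$ on $L$, I choose $n\in\Z$ with $\dist_L(t^ne_0\ell,\ell_0)\le K$. Since $t\in\ker\phi$, the element $t^ne_0\in E$ still sends the first coordinate to $1$, and the resulting image $(1,t^ne_0\ell)$ lies within $\ell^1$--distance $K$ of the basepoint; so every $E$--orbit meets the $K$--ball around $(1,\ell_0)$.

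For properness, suppose $e\in E$ satisfies $\dist((1,\ell_0),e\cdot(1,\ell_0))\le R$, which in the $\ell^1$ metric gives the simultaneous bounds $\dist_G(1,\phi(e))\le R$ and $\dist_L(\ell_0,e\ell_0)\le R$. The first bound confines $\phi(e)$ to a finite subset $\{g_1,\dots,g_N\}$ of $G$, since $G$ is finitely generated; pick lifts $e_1,\dots,e_N\in E$. Every candidate $e$ then has the form $e=e_it^n$ for a unique pair $(i,n)$, and the second bound becomes $\dist_L(\ell_0,e_it^n\ell_0)\le R$. Because $t$ acts as a loxodromic isometry on the quasi-line $L$, the orbit $\{t^n\ell_0\}_{n\in\Z}$ is a bi-infinite quasigeodesic, so for each fixed $i$ only finitely many $n$ satisfy this distance bound; summing over the finitely many $i$ yields finiteness of the stabiliser-like set, proving metric properness. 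The argument has no real obstruction---the hypothesis $\tau_L(t)>0$ is precisely what prevents the kernel of $\phi$ from spoiling properness on the $L$--factor, and the same hypothesis drives coboundedness through the cobounded $\langle t\rangle$--action on $L$.
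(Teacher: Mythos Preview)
Your proof is correct and follows essentially the same approach as the paper's: both split into coboundedness and properness, use that $\langle t\rangle$ acts coboundedly on the quasi-line $L$ (since $\tau_L(t)>0$), and finish properness by bounding the power of $t$ once $\phi(e)$ is confined to a finite set. The only cosmetic difference is that the paper pre-selects lifts $e_g\in\phi^{-1}(g)$ with $\dist_L(x,e_gx)$ uniformly bounded, which streamlines the triangle-inequality estimate, whereas you pick arbitrary lifts and argue separately for each; both are fine.
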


\begin{proof}
Fix a base vertex $x\in L$, and let $B$ be such that $L$ is covered by the $\langle t\rangle$--translates of the ball $B_L(x,B)$. For each $g\in G$, choose $e_g\in\phi^{-1}(g)$ such that $d_L(x,e_gx)\leq B$, which is possible because $t$ generates $\ker\phi$.

\emph{Properness.}  As $t$ is loxodromic on $L$, there exists $K$ such that $d_L(x,t^nx)\geq K|n|-K$ for all $n$. Given $R\geq 0$, let $G_R=\{g\in G \,:\, \dist_G(1,g)\leq R\}$, which is finite since $G$ is finitely generated.

Suppose  $e\in E$ moves $(1,x)$ a distance at most $R$ in $G\times L$. Then $\phi(e)\in G_R$ is one of only finitely many elements. There exists $n\in\Z$ such that $e=t^ne_{\phi(e)}$. From the triangle inequality, 
\[
d_L(x,t^nx) \,\leq\, \dist_L(x,ex) + \dist_L(t^ne_{\phi(e)}x,t^nx) \,\le\, R+B.
\]
Hence $|n|\leq (R+B+K)/K$, and so there are only finitely many such elements $e\in E$.

\emph{Coboundedness.}  Given $(g,y)\in G\times L$,  there exists $n$ such that $\dist_L(t^ne_gx,y)\leq B$. Because $t\in\ker\phi$, we have $\phi(t^ne_g)=g$, so $t^ne_g$ moves $(1,x)$ within distance $B$ of $(g,y)$.
\end{proof}

Lemma~\ref{lem:quasi-product} gives hierarchically hyperbolic structures on $\Z$--central extensions.

\begin{proposition}[HHG central extensions]\label{prop:central_ext}
Let $\phi\colon E\to G$ be a $\Z$--central extension of an HHG $(G,\s)$. Suppose $E$ acts by isometries on a graph $L$ that is quasi-isometric to $\mathbb R$.  Suppose that $\tau_L(t)>0$, where $t$ generates $\ker\phi$. The group $E$ admits an HHG structure $(E,\mathfrak S_E)$ where 
\begin{itemize}
    \item $\mathfrak S_E$ contains $\mathfrak S\sqcup \{A,S_E\}$, where $S_E$ and $A$ are two distinct symbols not in $\mathfrak S$;
    \item $\mathcal CA=L$, and $\calC W$ is a point whenever $W\in\mathfrak S_E-(\mathfrak S\sqcup\{A\})$;
    \item $A\perp U$ for all $U\in\mathfrak S$;
    \item $A\sqsubsetneq S_E$ and $U\sqsubsetneq S_E$ for all $U\in\mathfrak S$.
\end{itemize}
Moreover, $E$ stabilises $A$, the induced action on $\mathcal CA$ is the given action on $L$, and $\vbig(t)=\{A\}$. 
\end{proposition}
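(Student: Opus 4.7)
The plan is to build the HHG data on $E$ directly by pulling back the HHG structure of $(G,\s)$ via $\phi$ and adjoining $A$ and $S_E$ as prescribed. First, I would apply Lemma~\ref{lem:quasi-product} to conclude that $E$ acts metrically properly and coboundedly on $G\times L$ with the $\ell^1$--metric, so that $E$ is finitely generated and the orbit map $e\mapsto(\phi(e),e\cdot x_0)$ (for a fixed base vertex $x_0\in L$) is a quasi-isometry from $E$ to $G\times L$.

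Next I would define the HHG data. The projections are $\pi_U^E=\pi_U^G\circ\phi$ for $U\in\s$, $\pi_A^E(e)=e\cdot x_0$, and $\pi_{S_E}^E$ constant. The relations $\nest,\perp,\trans$ inside $\s$ are preserved; we declare $A\perp U$ for all $U\in\s$ and $U\pnest S_E$ for all $U\in\s\cup\{A\}$, with $S_E$ the unique $\nest$--maximal element. Existing $\rho^U_V$ with $U,V\in\s$ are unchanged; new data are $\rho^A_{S_E}$, $\rho^U_{S_E}$, and downward maps $\rho^{S_E}_\bullet$, all of which are trivial since $\calC S_E$ is a point, and there are no $\rho$--sets required between $A$ and old domains because $A\perp U$ for all $U\in\s$. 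The $E$--action on $\s_E$ fixes both $A$ and $S_E$ and acts on $\s$ through $\phi$; the corresponding isometries of hyperbolic spaces factor through $\phi$ on each $\calC U$, $U\in\s$, and on $\calC A=L$ are the given action. Cofiniteness follows from cofiniteness of the $G$--action on $\s$.

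Most axioms of \cite[Def.~1.1]{behrstockhagensisto:hierarchically:2} are then inherited or collapse. The map $\pi_U^E$ is coarsely Lipschitz because $\phi$ is Lipschitz and $\pi_U^G$ is; $\pi_A^E$ is the $E$--orbit map into the graph $L$ and is Lipschitz by coboundedness. Hyperbolicity of $\calC A$ is immediate since $L$ is quasi-isometric to $\R$. Finite complexity is preserved, since we have only added one orthogonal domain $A$ and a new top. Consistency, bounded geodesic image, partial realisation, large link, and uniqueness either reduce to the corresponding axioms of $(G,\s)$ on pairs $U,V\in\s$ (using that $\phi$ is a Lipschitz quotient), or become vacuous because the only pairs introduced are orthogonal pairs $(A,U)$ and nesting pairs into a point. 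The main technical step, the distance formula (Theorem~\ref{thm:DF}), follows from the quasi-isometry $E\sim G\times L$: the $G$--coordinate of $(\phi(e),e\cdot x_0)$ is estimated by the distance formula for $(G,\s)$ and contributes $\sum_{U\in\s}\{\!\!\{\dist_U^E(e,e')\}\!\!\}_D$, while the $L$--coordinate contributes $\dist_A^E(e,e')$, and the sum of these two is coarsely equal to $\dist_E(e,e')$ since we use the $\ell^1$--metric on $G\times L$.

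Finally, the moreover statement is almost automatic. The action on $\calC A=L$ is the given one by construction. For $t\in\ker\phi$ and $U\in\s$, we have $\pi_U^E(t^n)=\pi_U^G(1)$, a singleton, so $U\notin\vbig(t)$; whereas $\pi_A^E(t^n)=t^n\cdot x_0$ has unbounded diameter in $L$ by the assumption $\tau_L(t)>0$, giving $A\in\vbig(t)$. Hence $\vbig(t)=\{A\}$. The trickiest part I expect is the careful bookkeeping for the distance formula and for the consistency/bounded geodesic image axioms at the junction of the new and old structures, but both simplifications above (orthogonality of $A$ with every $U\in\s$ and triviality of $\calC S_E$) should make the interactions essentially trivial, with everything nontrivial reduced to the already established axioms for $(G,\s)$.
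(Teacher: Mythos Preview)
Your approach is essentially the same as the paper's: define projections by composing with $\phi$ on the old domains and by the orbit map on $A$, let $E$ act on $\s_E$ via $\phi$ on $\s$ and trivially on $\{A,S_E\}$, and invoke Lemma~\ref{lem:quasi-product} for properness and coboundedness of the $E$--action on $G\times L$. The only difference is that where you propose to verify the HHS axioms for $(G\times L,\s_E)$ by hand, the paper simply cites the product theorem \cite[Prop.~8.27]{behrstockhagensisto:hierarchically:2}, which packages exactly the checks you outline. One small remark: the distance formula (Theorem~\ref{thm:DF}) is a consequence of the axioms rather than something to verify directly, so what you really want there is the \emph{uniqueness} axiom, and your quasi-isometry argument is precisely what establishes it.
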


\begin{proof}
Equip $\mathfrak S$ with all the same relations, hyperbolic spaces, $\rho^{\bullet}_{\bullet}$ projections, as in $(G,\mathfrak S)$, but define the projections from $E\to\calC U$ for $U\in\mathfrak S$ by composing the projections $\pi_U\colon G\to\mathcal CU$ with $\phi\colon E\to G$. The projection $\pi_A\colon E\to\mathcal CA$ is an orbit map $E\to L$.  The remaining projections are maps to one-point spaces. 

The other elements of $\s_E$ are added to $\s$ as follows. By \cite[Prop.~8.27]{behrstockhagensisto:hierarchically:2}, $\mathfrak S_E$ can be chosen so that $(G\times L,\mathfrak S_E)$ with the $\ell^1$--metric is an HHS and all the bullet points in the statement are satisfied.  From the explicit description of this construction in \cite[Example 2.13]{berlairobbio:refined}, the set $\mathfrak S_E$ consists of $A,S_E,\mathfrak S$, and an element $V_U$ for each $U\in\mathfrak S$ except the $\nest$--maximal element.  The group $E$ acts on $\mathfrak S$ via the $G$--action and $\phi$. We declare $E$ to act on the set of $V_U$ in the same way and  to fix $A$ and $S_E$.  Since the $G$--action on $\mathfrak S$ is cofinite, the $E$--action on $\mathfrak S_E$ is cofinite.  It is easily verified that, with the diagonal action of $E$ on $G\times L$, the equivariance conditions of an HHG are satisfied. It remains to check that the action of $E$ on $G\times L$ is proper and cobounded, but this is given by Lemma~\ref{lem:quasi-product}.
\end{proof}

Observe that the statement of Proposition~\ref{prop:central_ext} implies that $\mathcal CS_E$ is a single point, which is consistent with the usual situation for an HHG that is coarsely a nontrivial product.

We can now show that many $\Z$--central extensions of HHGs are HHGs with structures that are not $\s$--translation discrete.

\begin{theorem}\label{thm:counterexamples}
Let $(G,\mathfrak S)$ be an HHG that is not quasi-isometric to the product of two unbounded spaces.  Then every $\Z$--central extension $E\to G$ arising from a bounded class admits an HHG structure $(E,\mathfrak S_E)$ such that $E$ is not $\s_E$--translation discrete.
\end{theorem}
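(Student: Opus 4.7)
The plan is to combine the tools developed in Sections~\ref{subsec:central-extension-notation}--\ref{subsec:small-hhg} into a direct assembly. The first step is to deduce from the hypothesis that $G$ admits a nontrivial homogeneous quasimorphism. I would appeal to the standard trichotomy for HHGs which are not quasi-isometric to a product of two unbounded spaces: up to passing to finite-index or finite-kernel quotients, such a $G$ is virtually cyclic or acylindrically hyperbolic. Both cases produce nontrivial homogeneous quasimorphisms: a homomorphism to $\Z$ on a finite-index subgroup averaged to a quasimorphism in the first case, and the Brooks-type construction of Section~\ref{subsubsec:brooks} (or Bestvina--Fujiwara) in the second.

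With such a quasimorphism on $G$ in hand, I would apply Corollary~\ref{cor:acyl-quasi} to the central extension $\phi\colon E\to G$ to obtain a homogeneous quasimorphism $\hat r\colon E\to\R$ satisfying $\hat r(t)=1$ and taking arbitrarily small positive values on $E$. Since $\hat r$ is nonvanishing, Lemma~\ref{lem:quasiline} produces a graph $L$, quasi-isometric to $\R$, carrying a vertex-transitive isometric action of $E$ for which $\tau_L(e)$ is comparable to $|\hat r(e)|$ for every $e\in E$. In particular $\tau_L(t)>0$, so Proposition~\ref{prop:central_ext} applies and yields the HHG structure $(E,\mathfrak S_E)$ with $\calC A = L$ and $A$ an $E$-invariant domain orthogonal to every $U\in\mathfrak S$.

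To conclude, I would choose a sequence $(e_n)\subset E$ with $\hat r(e_n)>0$ and $\hat r(e_n)\to 0$. By Lemma~\ref{lem:quasiline} we have $0<\tau_{\calC A}(e_n)\to 0$; in particular every $e_n$ has infinite order, and $A\in\vbig(e_n)$ because $\langle e_n\rangle$ has unbounded orbits in $\calC A=L$. Writing $c$ for the complexity of $(E,\mathfrak S_E)$, the identity $\tau_{\calC A}(e_n^{c!})=c!\cdot\tau_{\calC A}(e_n)\to 0$ rules out the uniform lower bound required by Definition~\ref{defn:strong-18}, so $(E,\mathfrak S_E)$ is not $\mathfrak S_E$--translation discrete.

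The main obstacle is the very first step: extracting a nontrivial homogeneous quasimorphism on $G$ from the hypothesis. For the central cases of interest (infinite hyperbolic or more generally acylindrically hyperbolic HHGs) this is routine, but genuinely degenerate examples like finite or virtually infinite dihedral $G$ admit no nontrivial homogeneous quasimorphism at all, so one must either argue that such groups are excluded by the stated hypothesis or refine the construction for them. Once this initial step is in place, the remaining argument is a mechanical assembly of the results established earlier in Section~\ref{sec:strong_18}.
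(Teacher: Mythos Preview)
Your proposal is correct and follows the same route as the paper. The paper handles your first step by citing \cite[Cor.~4.7 \& Rem.~4.8]{petytspriano:unbounded} for the dichotomy that $G$ is acylindrically hyperbolic or $2$--ended, and then asserts that Corollary~\ref{cor:acyl-quasi} applies in either case; the remaining assembly via Lemma~\ref{lem:quasiline} and Proposition~\ref{prop:central_ext} is exactly as you outline. Your caveat about $2$--ended groups with no nontrivial homogeneous quasimorphism (such as the infinite dihedral group) is a genuine edge case that the paper's proof also passes over without comment.
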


\begin{proof}
By \cite[Cor.~4.7, Rem.~4.8]{petytspriano:unbounded}, $G$ is either acylindrically hyperbolic or $2$--ended.  In either case, Proposition~\ref{prop:acyl-quasi} provides a homogeneous quasimorphism $\hat r\colon E\to\mathbb R$ taking arbitrarily small positive values, with $\hat r(t)=1$, where $t$ generates $\ker(\phi)$. Lemma~\ref{lem:quasiline} gives a quasiline $L$ and an isometric $E$--action on $L$ where $\tau_L$ takes arbitrarily small positive values on $E$ but $\tau_L(t)>0$.

According to Proposition~\ref{prop:central_ext}, $E$ admits an HHG structure $(E,\mathfrak S_E)$ for which there exists $A\in\mathfrak S_E$ such that $\mathcal CA=L$, $E$ fixes $A$, and the $E$--action on $\mathcal CA$ is exactly the action on $L$ given above.  In particular, $\tau_{A}(t)>0$ and $\tau_A(e)$ takes arbitrarily small positive values as $e$ varies in $E$.  By Definition~\ref{defn:strong-18}, $E$ is thus not $\s_E$--translation discrete.
\end{proof}

\begin{remark}\label{rem:counterexample-comments}
In Theorem~\ref{thm:counterexamples}, if $G$ is acylindrically hyperbolic then the stronger statement in Proposition~\ref{prop:acyl-quasi} gives an HHG structure $(E,\s)$ where the arbitrarily small translation lengths on the quasiline are witnessed by a sequence of elements $(\tilde g_i)_i$ in $E$ whose images in $G$ are loxodromic on the top-level hyperbolic space for the original HHG structure on $G$. In fact there is a great deal of flexibility in choosing these $\tilde g_i$.
\end{remark}

\section{Questions}\label{sec:questions}

As noted in the introduction, Button showed in \cite{button:aspects} that \emph{uniform} undistortion of $\mathbb Z$ subgroups implies undistortion of $\Z^n$ subgroups, though not uniform undistortion. It is natural to ask whether this can be improved in the setting of spaces with barycentres.

\begin{question}\label{question:uu-abelian}
Suppose that $G$ acts properly and coboundedly on a space with barycentres.  Are $\Z^n$ subgroups of $G$ uniformly undistorted for each $n\geq 2$?  
\end{question}

As in the cyclic case, proving such results usually requires some form of a \emph{flat-torus} theorem \cite{gromollwolf:some,lawsonyau:compact,bridsonhaefliger:metric}. The following makes this precise.

\begin{question}\label{question:flats-barycentres}
Let $G$ act uniformly properly on a metric space $X$ with barycentres and let $n>1$. Does there exist $\lambda$ such that for all subgroups $H\leq G$ with $H\cong \Z^n$, there is an $H$--invariant subspace $F\subseteq X$ that is $(\lambda,\lambda)$--quasi-isometric to $\R^n$?
\end{question}

As discussed at the end of Section~\ref{sec:UU}, partial flat-torus statements are given by Proposition~\ref{prop:uniformly_flat_tori} for spaces with barycentres, and by work of Descombes--Lang on groups acting properly cocompactly on spaces with convex, consistent bicombings \cite{descombeslang:flats}. It seems plausible that the answer to Question~\ref{question:uu-abelian}, and hence to Question~\ref{question:flats-barycentres}, is negative in the given generality---a counterexample would be very interesting.

As mentioned in the introduction, if $G$ acts properly and cocompactly on a CAT(0) cube complex $X$, then $G$ has a WPD action on the contact graph $\calC X$ of $X$, but there are examples for which the action fails to be acylindrical because it is not translation discrete \cite{shepherd:cubulation,genevois:median}. This raises a natural question.

\begin{question}\label{question:contact_graph}
Let $G$ act properly and cocompactly on the CAT(0) cube complex $X$, and suppose that $\tau_{\mathcal CX}$ is bounded away from $0$ on loxodromic elements of $G$.  Is the induced action on $\calC X$ acylindrical?
\end{question}

One can ask analogous question for \emph{quasimedian graphs}, studied in \cite{genevois:translation}, or for the \emph{curtain models} of CAT(0) groups studied in \cite{petytsprianozalloum:hyperbolic}.

To find uniform quasi-axes for elements of an HHG $(G,\s)$, we used the $G$--action on an injective space.  This is similar to how bicombings on injective spaces are used in \cite{haettelhodapetyt:coarse} to produce equivariant bicombings on $G$.  As noted in that paper, it is unknown whether those bicombing quasigeodesics are \emph{hierarchy paths} for the given HHG structure, and we can ask the same here.

\begin{question}\label{question:hierarchy-axes} 
Given an HHG $(G,\mathfrak S)$, does there exist $D$ such that every infinite-order $g\in G$ has a $D$--quasi-axis that projects to an unparametrised $(D,D)$--quasigeodesic in every $\mathcal CU$?    
\end{question}

Note that a positive answer to Question~\ref{question:hierarchy-axes} is not at odds with the examples of Theorem~\ref{thm:intro-counterexamples} that are not $\s$--translation discrete, because it makes no requirements of the translation lengths of $g$ on the projections of its quasi-axis. Relatedly, although there are HHGs $(G,\s)$ that are not $\s$--translation discrete, Example \ref{exmp:modified_abelian} shows that there may be another HHG structure $(G,\s')$ that is $\s'$--translation discrete.

\begin{question}\label{question:all-structures}
Does there exist an HHG for which no HHG structure is $\s$--translation discrete?
\end{question}

Each hierarchically hyperbolic group $(G,\s)$ has a \textit{coarse median structure} associated with $\s$ \cite{behrstockhagensisto:hierarchically:2,bowditch:coarse}.  Distinct hierarchical structures can result in equivalent coarse median structures, but in Example~\ref{exmp:modified_abelian} the coarse median structures associated to the $\s_\eps$ are pairwise distinct.

\begin{question}\label{question:CM-s-t-d}
If $(G,\s)$ and $(G,\s')$ are HHG structures with the same associated coarse median structure, is it the case that $G$ is $\s$--translation discrete if and only if $G$ is $\s'$--translation discrete?
\end{question}

A positive answer to Question~\ref{question:CM-s-t-d} may give examples answering Question~\ref{question:all-structures}, namely irreducible lattices in products of ``bushy'' CAT$(-1)$ spaces, for example those of \cite{hughesvaliunas:commensurating}. Indeed, such lattices have unique coarse median structures by \cite{fioravantisisto:uniqueness}, and their standard structure appears to not be $\s$--translation discrete.

In the direction of finding $\s$--translation discrete structures, and in the spirit of Section~\ref{sec:strong_18}, one can ask the following.

\begin{question}\label{question:all-central-extension-HHGS}
Let $G$ be an acylindrically hyperbolic HHG, and let $\phi\colon E\to G$ be a $\Z$--central extension associated to a bounded cohomology class.  When does $E$ admit a homogeneous quasimorphism $\hat r\colon E\to\R$ such that $\hat r$ is unbounded on $\ker(\phi)$, and $|\hat r(e)|$ does not take arbitrarily small positive values as $e$ varies in $E$ (we allow $\hat r(e)=0$)?
\end{question}

Given such an $\hat r$, Proposition~\ref{prop:central_ext} produces an HHG structure $\s$ on $E$ that is $\s$--translation discrete provided $G$ admits one.  In light of \cite{abbottngsprianoguptapetyt:hierarchically}, where special consideration is given to hierarchically hyperbolic groups coarsely having a $\Z$ factor, one can also ask:

\begin{question}\label{question:coarse-direct-factor}
Let $E$ be an HHG quasi-isometric to $\Z\times A$, where $A$ is an unbounded space with an asymptotic cone that has a cut-point.  Must $E$ contain a finite-index subgroup $E'$ and a infinite-order element $t\in E'$ such that $t$ is central in $E'$ and $E'/\langle t\rangle$ is an HHG?
\end{question}

A positive answer would show that, to strengthen the results in \cite{abbottngsprianoguptapetyt:hierarchically}, $\s$--translation discreteness is most interesting for central extensions.

It may be that central extensions alone aren't enough to answer Question~\ref{question:all-structures}: there could be more elaborate examples involving complexes of groups whose vertex groups are central extensions of HHGs, assembled so that a combination theorem as in \cite{behrstockhagenmartinsisto:combinatorial} provides an HHG structure, but where the induced HHG structures on the vertex groups are forced to involve non--translation-discrete actions on quasilines. The graphs of groups in \cite{hagenrussellsistospriano:equivariant} might be a starting point.

We finish by mentioning two related avenues for research. The first concerns length spectrum rigidity. Given a group $G$ acting properly and coboundedly on an injective space $X$, one could seek conditions under which the spectrum $\tau_X(G)$ determines $X$ up to $G$--equivariant isometry, possibly among actions on injective spaces in some restricted class. One could also attempt to characterise HHG structures up to some natural equivalence from similar data. The aim would be a useful notion of the ``space of injective/HHG structures'' for $G$.  Some motivation for this idea comes from the marked $\ell^1$--length spectrum rigidity result for certain classes of actions on cube complexes \cite{beyrerfioravanti:crossratios}.  

The second considers rationality of $\tau_G$. When $G$ is hyperbolic, there is an integer $N$ such that all infinite-order elements $g\in G$ satisfy $\tau_G(g) \in \frac{1}{N} \mathbb{Z}$. The same statement holds more generally for \emph{$M$--Morse} elements of \emph{Morse local-to-global} groups \cite{russellsprianotran:local}, a class that includes groups acting properly and coboundedly on injective spaces \cite{sistozalloum:morse}. However, when $G$ is not hyperbolic, not all infinite-order elements are uniformly Morse \cite{cordesdurham:boundary}.  Thus, given a group $G$ with uniformly undistorted infinite-cyclic subgroups, one could investigate whether $\tau_G(g)$ is rational for all $g\in G$. This is unknown for mapping class groups.

\bibliography{bio}{}

\newcommand{\etalchar}[1]{$^{#1}$}
\begin{thebibliography}{BKMM12}

\bibitem[ABO19]{abbottbalasubramanyaosin:hyperbolic}
Carolyn Abbott, Sahana~H. Balasubramanya, and Denis Osin.
\newblock Hyperbolic structures on groups.
\newblock {\em Algebr. Geom. Topol.}, 19(4):1747--1835, 2019.

\bibitem[ACGH19]{ArzhantsevaCashenGruberHume:negative}
Goulnara~N. Arzhantseva, Christopher~H. Cashen, Dominik Gruber, and David Hume.
\newblock Negative curvature in graphical small cancellation groups.
\newblock {\em Groups Geom. Dyn.}, 13(2):579--632, 2019.

\bibitem[Ali02]{alibegovic:translation}
Emina Alibegovi\'{c}.
\newblock Translation lengths in {$\operatorname{Out}(F_n)$}.
\newblock {\em Geom. Dedicata}, 92:87--93, 2002.
\newblock Dedicated to John Stallings on the occasion of his {$65^{\mathrm
  th}$} birthday.

\bibitem[ANS{\etalchar{+}}24]{abbottngsprianoguptapetyt:hierarchically}
Carolyn~R. Abbott, Thomas Ng, Davide Spriano, Radhika Gupta, and Harry Petyt.
\newblock Hierarchically hyperbolic groups and uniform exponential growth.
\newblock {\em Math. Z.}, 306(18):1--33, 2024.

\bibitem[AT17]{aougabtaylor:pseudoanosovs}
Tarik Aougab and Samuel~J. Taylor.
\newblock Pseudo-{A}nosovs optimizing the ratio of {T}eichm\"{u}ller to curve
  graph translation length.
\newblock In {\em In the tradition of {A}hlfors-{B}ers. {VII}}, volume 696 of
  {\em Contemp. Math.}, pages 17--28. Amer. Math. Soc., Providence, RI, 2017.

\bibitem[BBF19]{bestvinabrombergfujiwara:verbal}
Mladen Bestvina, Kenneth Bromberg, and Koji Fujiwara.
\newblock The verbal width of acylindrically hyperbolic groups.
\newblock {\em Algebr. Geom. Topol.}, 19(1):477--489, 2019.

\bibitem[Beh06]{behrstock:asymptotic}
Jason Behrstock.
\newblock Asymptotic geometry of the mapping class group and {T}eichm\"{u}ller
  space.
\newblock {\em Geom. Topol.}, 10:1523--1578, 2006.

\bibitem[BF21]{beyrerfioravanti:crossratios}
Jonas Beyrer and Elia Fioravanti.
\newblock Cross-ratios on cat(0) cube complexes and marked length-spectrum
  rigidity.
\newblock {\em J. Lond. Math. Soc. (2)}, 104(5):1973--2015, 2021.

\bibitem[BH99]{bridsonhaefliger:metric}
Martin~R. Bridson and Andr\'{e} Haefliger.
\newblock {\em Metric spaces of non-positive curvature}, volume 319 of {\em
  Grundlehren der Mathematischen Wissenschaften}.
\newblock Springer-Verlag, Berlin, 1999.

\bibitem[BHMS24]{behrstockhagenmartinsisto:combinatorial}
Jason Behrstock, Mark Hagen, Alexandre Martin, and Alessandro Sisto.
\newblock A combinatorial take on hierarchical hyperbolicity and applications
  to quotients of mapping class groups.
\newblock {\em J. Topol.}, 17(3):1--94, 2024.

\bibitem[BHS17]{behrstockhagensisto:hierarchically:1}
Jason Behrstock, Mark Hagen, and Alessandro Sisto.
\newblock Hierarchically hyperbolic spaces,~{I}: {C}urve complexes for cubical
  groups.
\newblock {\em Geom. Topol.}, 21(3):1731--1804, 2017.

\bibitem[BHS19]{behrstockhagensisto:hierarchically:2}
Jason Behrstock, Mark Hagen, and Alessandro Sisto.
\newblock Hierarchically hyperbolic spaces~{II}: {C}ombination theorems and the
  distance formula.
\newblock {\em Pacific J. Math.}, 299(2):257--338, 2019.

\bibitem[BKMM12]{behrstockkleinerminskymosher:geometry}
Jason Behrstock, Bruce Kleiner, Yair~N. Minsky, and Lee Mosher.
\newblock Geometry and rigidity of mapping class groups.
\newblock {\em Geom. Topol.}, 16(2):781--888, 2012.

\bibitem[BLM83]{birmanlubotzkymccarthy:abelian}
Joan~S. Birman, Alex Lubotzky, and John McCarthy.
\newblock Abelian and solvable subgroups of the mapping class groups.
\newblock {\em Duke Math. J.}, 50(4):1107--1120, 1983.

\bibitem[BM00]{burgermozes:lattices}
Marc Burger and Shahar Mozes.
\newblock Lattices in product of trees.
\newblock {\em Inst. Hautes \'{E}tudes Sci. Publ. Math.}, 92:151--194 (2001),
  2000.

\bibitem[Bow98]{bowditch:continuously}
Brian~H. Bowditch.
\newblock Continuously many quasi-isometry classes of {$2$}-generator groups.
\newblock {\em Comment. Math. Helv.}, 73(2):232--236, 1998.

\bibitem[Bow08]{bowditch:tight}
Brian~H. Bowditch.
\newblock Tight geodesics in the curve complex.
\newblock {\em Invent. Math.}, 171(2):281--300, 2008.

\bibitem[Bow13]{bowditch:coarse}
Brian~H. Bowditch.
\newblock Coarse median spaces and groups.
\newblock {\em Pacific J. Math.}, 261(1):53--93, 2013.

\bibitem[Bow20]{bowditch:median}
Brian~H. Bowditch.
\newblock Median and injective metric spaces.
\newblock {\em Math. Proc. Cambridge Philos. Soc.}, 168(1):43--55, 2020.

\bibitem[BR20]{berlairobbio:refined}
Federico Berlai and Bruno Robbio.
\newblock A refined combination theorem for hierarchically hyperbolic groups.
\newblock {\em Groups Geom. Dyn.}, 14(4):1127--1203, 2020.

\bibitem[BR22]{berlynerussell:hierarchical}
Daniel Berlyne and Jacob Russell.
\newblock Hierarchical hyperbolicity of graph products.
\newblock {\em Groups Geom. Dyn.}, 16(2):523--580, 2022.

\bibitem[Bro81]{brooks:some}
Robert Brooks.
\newblock Some remarks on bounded cohomology.
\newblock In {\em Riemann surfaces and related topics: {P}roceedings of the
  1978 {S}tony {B}rook {C}onference}, volume~97 of {\em Ann. of Math. Stud.},
  pages 53--63. Princeton Univ. Press, Princeton, N.J., 1981.

\bibitem[Bro94]{brown:cohomology}
Kenneth~S. Brown.
\newblock {\em Cohomology of groups}, volume~87 of {\em Graduate Texts in
  Mathematics}.
\newblock Springer-Verlag, New York, 1994.
\newblock Corrected reprint of the 1982 original.

\bibitem[BSS23]{baikseoshin:onfiniteness}
Hyungryul Baik, Donggyun Seo, and Hyunshik Shin.
\newblock On the finiteness property of hyperbolic simplicial actions: the
  right-angled {A}rtin groups and their extension graphs.
\newblock {\em Geom. Dedicata}, 217(1):1--29, 2023.

\bibitem[But19]{button:aspects}
Jack~Oliver Button.
\newblock Aspects of non positive curvature for linear groups with no infinite
  order unipotents.
\newblock {\em Groups Geom. Dyn.}, 13(1):277--292, 2019.

\bibitem[BW16]{bellwebb:polynomialtime}
Mark~C. Bell and Richard C.~H. Webb.
\newblock Polynomial-time algorithms for the curve graph.
\newblock {\em arXiv:1609.09392}, 2016.

\bibitem[Cal09]{calegari:scl}
Danny Calegari.
\newblock {\em scl}, volume~20 of {\em MSJ Memoirs}.
\newblock Mathematical Society of Japan, Tokyo, 2009.

\bibitem[CCG{\etalchar{+}}20]{chalopinchepoigenevoishiraiosajda:helly}
J{\'e}r{\'e}mie Chalopin, Victor Chepoi, Anthony Genevois, Hiroshi Hirai, and
  Damian Osajda.
\newblock Helly groups.
\newblock {\em arXiv:2002.06895}, 2020.

\bibitem[CD19]{cordesdurham:boundary}
Matthew Cordes and Matthew~G. Durham.
\newblock Boundary convex cocompactness and stability of subgroups of finitely
  generated groups.
\newblock {\em Int. Math. Res. Not. IMRN}, 6:1699--1724, 2019.

\bibitem[Che22]{chesser:stable}
Marissa Chesser.
\newblock Stable subgroups of the genus 2 handlebody group.
\newblock {\em Algebr. Geom. Topol.}, 22(2):919--971, 2022.

\bibitem[CHK22]{casalsruizhagenkazachkov:real}
Montserrat {Casals-Ruiz}, Mark Hagen, and Ilya Kazachkov.
\newblock Real cubings and asymptotic cones of hierarchically hyperbolic
  groups.
\newblock {\em Preprint available at \mbox{www.wescac.net/cones.html}}, 2022.

\bibitem[Con00a]{conner:discreteness}
Gregory~R. Conner.
\newblock Discreteness properties of translation numbers in solvable groups.
\newblock {\em J. Group Theory}, 3(1):77--94, 2000.

\bibitem[Con00b]{conner:translation}
Gregory~R. Conner.
\newblock Translation numbers of groups acting on quasiconvex spaces.
\newblock In {\em Computational and geometric aspects of modern algebra},
  volume 275 of {\em London Math. Soc. Lecture Note Ser.}, pages 28--38.
  Cambridge Univ. Press, Cambridge, 2000.

\bibitem[Del96]{delzant:sousgroupes}
Thomas Delzant.
\newblock Sous-groupes distingu\'{e}s et quotients des groupes hyperboliques.
\newblock {\em Duke Math. J.}, 83(3):661--682, 1996.

\bibitem[Des16]{descombes:asymptotic}
Dominic Descombes.
\newblock Asymptotic rank of spaces with bicombings.
\newblock {\em Math. Z.}, 284(3-4):947--960, 2016.

\bibitem[DGO17]{dahmaniguirardelosin:hyperbolically}
F.~Dahmani, V.~Guirardel, and D.~Osin.
\newblock Hyperbolically embedded subgroups and rotating families in groups
  acting on hyperbolic spaces.
\newblock {\em Mem. Amer. Math. Soc.}, 245(1156):v+152, 2017.

\bibitem[DHS17]{durhamhagensisto:boundaries}
Matthew~G. Durham, Mark Hagen, and Alessandro Sisto.
\newblock Boundaries and automorphisms of hierarchically hyperbolic spaces.
\newblock {\em Geom. Topol.}, 21(6):3659--3758, 2017.

\bibitem[DHS20]{durhamhagensisto:correction}
Matthew~G. Durham, Mark Hagen, and Alessandro Sisto.
\newblock Correction to: {B}oundaries and automorphisms of hierarchically
  hyperbolic spaces.
\newblock {\em Geom. Topol.}, 24(2):1051--1073, 2020.

\bibitem[DK18]{drutukapovich:geometric}
Cornelia Dru\c{t}u and Michael Kapovich.
\newblock {\em Geometric group theory}, volume~63 of {\em American Mathematical
  Society Colloquium Publications}.
\newblock American Mathematical Society, Providence, RI, 2018.
\newblock With an appendix by Bogdan Nica.

\bibitem[DL15]{descombeslang:convex}
Dominic Descombes and Urs Lang.
\newblock Convex geodesic bicombings and hyperbolicity.
\newblock {\em Geom. Dedicata}, 177:367--384, 2015.

\bibitem[DL16]{descombeslang:flats}
Dominic Descombes and Urs Lang.
\newblock Flats in spaces with convex geodesic bicombings.
\newblock {\em Anal. Geom. Metr. Spaces}, 4(1):68--84, 2016.

\bibitem[Dre84]{dress:trees}
Andreas W.~M. Dress.
\newblock Trees, tight extensions of metric spaces, and the cohomological
  dimension of certain groups: a note on combinatorial properties of metric
  spaces.
\newblock {\em Adv. in Math.}, 53(3):321--402, 1984.

\bibitem[EH99]{essahibheinich:barycentre}
Aziz {Es-Sahib} and Henri Heinich.
\newblock Barycentre canonique pour un espace m\'{e}trique \`a courbure
  n\'{e}gative.
\newblock In {\em S\'{e}minaire de {P}robabilit\'{e}s, {XXXIII}}, volume 1709
  of {\em Lecture Notes in Math.}, pages 355--370. Springer, Berlin, 1999.

\bibitem[FM12]{farbmargalit:primer}
Benson Farb and Dan Margalit.
\newblock {\em A primer on mapping class groups}, volume~49 of {\em Princeton
  Mathematical Series}.
\newblock Princeton University Press, Princeton, NJ, 2012.

\bibitem[Fri17]{frigerio:bounded}
Roberto Frigerio.
\newblock {\em Bounded cohomology of discrete groups}, volume 227 of {\em
  Mathematical Surveys and Monographs}.
\newblock American Mathematical Society, Providence, RI, 2017.

\bibitem[FS25]{fioravantisisto:uniqueness}
Elia Fioravanti and Alessandro Sisto.
\newblock On uniqueness of coarse median structures.
\newblock {\em arXiv:2502.13865}, 2025.

\bibitem[Gen22a]{genevois:median}
Anthony Genevois.
\newblock Median sets of isometries in cat(0) cube complexes and some
  applications.
\newblock {\em Michigan Math. J.}, 71(3):487--532, 2022.

\bibitem[Gen22b]{genevois:translation}
Anthony Genevois.
\newblock Translation lengths in crossing and contact graphs of quasi-median
  graphs.
\newblock {\em arXiv:2209.06441}, 2022.

\bibitem[Ger92]{gersten:bounded}
S.~M. Gersten.
\newblock Bounded cocycles and combings of groups.
\newblock {\em Internat. J. Algebra Comput.}, 2(3):307--326, 1992.

\bibitem[Gro87]{gromov:hyperbolic}
M.~Gromov.
\newblock Hyperbolic groups.
\newblock In {\em Essays in group theory}, volume~8 of {\em Math. Sci. Res.
  Inst. Publ.}, pages 75--263. Springer, New York, 1987.

\bibitem[GS91]{gerstenshort:rational}
S.~M. Gersten and H.~B. Short.
\newblock Rational subgroups of biautomatic groups.
\newblock {\em Ann. of Math. (2)}, 134(1):125--158, 1991.

\bibitem[GW71]{gromollwolf:some}
Detlef Gromoll and Joseph~A. Wolf.
\newblock Some relations between the metric structure and the algebraic
  structure of the fundamental group in manifolds of nonpositive curvature.
\newblock {\em Bull. Amer. Math. Soc.}, 77:545--552, 1971.

\bibitem[Hae22]{haettel:link}
Thomas Haettel.
\newblock A link condition for simplicial complexes, and {CUB} spaces.
\newblock {\em arXiv:2211.07857}, 2022.

\bibitem[Hag14]{hagen:weak}
Mark Hagen.
\newblock Weak hyperbolicity of cube complexes and quasi-arboreal groups.
\newblock {\em J. Topol.}, 7(2):385--418, 2014.

\bibitem[Hag23]{haglund:isometries}
Fr\'{e}d\'{e}ric Haglund.
\newblock Isometries of {$CAT(0)$} cube complexes are semi-simple.
\newblock {\em Ann. Math. Qu\'{e}.}, 47(2):249--261, 2023.

\bibitem[HHP23]{haettelhodapetyt:coarse}
Thomas Haettel, Nima Hoda, and Harry Petyt.
\newblock Coarse injectivity, hierarchical hyperbolicity, and
  semihyperbolicity.
\newblock {\em Geom. Topol.}, 27(4):1587--1633, 2023.

\bibitem[HHP25]{haettelhodapetyt:lp}
Thomas Haettel, Nima Hoda, and Harry Petyt.
\newblock {$\ell^p$} metrics on cell complexes.
\newblock {\em J. Lond. Math. Soc. (2)}, 111(1):1--43, 2025.

\bibitem[HMS24]{hagenmartinsisto:extra}
Mark Hagen, Alexandre Martin, and Alessandro Sisto.
\newblock Extra-large type {A}rtin groups are hierarchically hyperbolic.
\newblock {\em Math. Ann.}, 388(1):867--938, 2024.

\bibitem[HO13]{hullosin:induced}
Michael Hull and Denis Osin.
\newblock Induced quasicocycles on groups with hyperbolically embedded
  subgroups.
\newblock {\em Algebr. Geom. Topol.}, 13(5):2635--2665, 2013.

\bibitem[HO21a]{haettelosajda:locally}
Thomas Haettel and Damian Osajda.
\newblock Locally elliptic actions, torsion groups, and nonpositively curved
  spaces.
\newblock {\em arXiv:2110.12431}, 2021.

\bibitem[HO21b]{huangosajda:helly}
Jingyin Huang and Damian Osajda.
\newblock Helly meets {G}arside and {A}rtin.
\newblock {\em Invent. Math.}, 225(2):395--426, 2021.

\bibitem[HRSS22]{hagenrussellsistospriano:equivariant}
Mark Hagen, Jacob Russell, Alessandro Sisto, and Davide Spriano.
\newblock Equivariant hierarchically hyperbolic structures for 3--manifold
  groups via quasimorphisms.
\newblock {\em arXiv:2206.12244}, 2022.

\bibitem[HS20]{hagensusse:onhierarchical}
Mark Hagen and Tim Susse.
\newblock On hierarchical hyperbolicity of cubical groups.
\newblock {\em Israel J. Math.}, 236(1):45--89, 2020.

\bibitem[Hug22]{hughes:lattices}
Sam Hughes.
\newblock Lattices in a product of trees, hierarchically hyperbolic groups and
  virtual torsion-freeness.
\newblock {\em Bull. Lond. Math. Soc.}, 54(4):1413--1419, 2022.

\bibitem[HV24]{hughesvaliunas:commensurating}
Sam Hughes and Motiejus Valiunas.
\newblock Commensurating {HNN}-extensions: hierarchical hyperbolicity and
  biautomaticity.
\newblock {\em Comment. Math. Helv.}, 99(2):397--436, 2024.

\bibitem[HW75]{hardywright:introduction}
G.~H. Hardy and E.~M. Wright.
\newblock {\em An introduction to the theory of numbers}.
\newblock The Clarendon Press, Oxford University Press, New York, fourth
  edition, 1975.

\bibitem[HW08]{haglundwise:special}
Fr\'{e}d\'{e}ric Haglund and Daniel~T. Wise.
\newblock Special cube complexes.
\newblock {\em Geom. Funct. Anal.}, 17(5):1551--1620, 2008.

\bibitem[KK13]{kimkoberda:embedability}
Sang-Hyun Kim and Thomas Koberda.
\newblock Embedability between right-angled {A}rtin groups.
\newblock {\em Geom. Topol.}, 17(1):493--530, 2013.

\bibitem[KK14]{kimkoberda:geometry}
Sang-Hyun Kim and Thomas Koberda.
\newblock The geometry of the curve graph of a right-angled {A}rtin group.
\newblock {\em Internat. J. Algebra Comput.}, 24(2):121--169, 2014.

\bibitem[KL09]{kleinerleeb:induced}
Bruce Kleiner and Bernhard Leeb.
\newblock Induced quasi-actions: a remark.
\newblock {\em Proc. Amer. Math. Soc.}, 137(5):1561--1567, 2009.

\bibitem[Lan13]{lang:injective}
Urs Lang.
\newblock Injective hulls of certain discrete metric spaces and groups.
\newblock {\em J. Topol. Anal.}, 5(3):297--331, 2013.

\bibitem[LY72]{lawsonyau:compact}
H.~Blaine Lawson, Jr. and Shing~Tung Yau.
\newblock Compact manifolds of nonpositive curvature.
\newblock {\em J. Differential Geometry}, 7:211--228, 1972.

\bibitem[Man06]{manning:quasiactions}
J.~F. Manning.
\newblock Quasi-actions on trees and property ({QFA}).
\newblock {\em J. London Math. Soc. (2)}, 73(1):84--108, 2006.
\newblock With an appendix by N. Monod and B. R\'{e}my.

\bibitem[Man13]{mangahas:recipe}
Johanna Mangahas.
\newblock A recipe for short-word pseudo-{A}nosovs.
\newblock {\em Amer. J. Math.}, 135(4):1087--1116, 2013.

\bibitem[Min02]{mineyev:bounded}
Igor Mineyev.
\newblock Bounded cohomology characterizes hyperbolic groups.
\newblock {\em Q. J. Math.}, 53(1):59--73, 2002.

\bibitem[MM99]{masurminsky:geometry:1}
Howard~A. Masur and Yair~N. Minsky.
\newblock Geometry of the complex of curves.~{I}. {H}yperbolicity.
\newblock {\em Invent. Math.}, 138(1):103--149, 1999.

\bibitem[MM00]{masurminsky:geometry:2}
Howard~A. Masur and Yair~N. Minsky.
\newblock Geometry of the complex of curves.~{II}. {H}ierarchical structure.
\newblock {\em Geom. Funct. Anal.}, 10(4):902--974, 2000.

\bibitem[Mos03]{Mosher:train}
Lee Mosher.
\newblock Train track expansions of measured foliations.
\newblock {\em Preprint available at
  \mbox{https://paperzz.com/doc/7959945/train-track-expansions-of-measured-foliations}},
  2003.

\bibitem[Nav13]{navas:L1}
Andr\'{e}s Navas.
\newblock An {$L^1$} ergodic theorem with values in a non-positively curved
  space via a canonical barycenter map.
\newblock {\em Ergodic Theory Dynam. Systems}, 33(2):609--623, 2013.

\bibitem[Pet22]{petyt:onlarge}
Harry Petyt.
\newblock {\em On the large-scale geometry of mapping class groups}.
\newblock PhD thesis, University of Bristol, 2022.

\bibitem[PS23]{petytspriano:unbounded}
Harry Petyt and Davide Spriano.
\newblock Unbounded domains in hierarchically hyperbolic groups.
\newblock {\em Groups Geom. Dyn.}, 17(2):479--500, 2023.

\bibitem[PSZ24]{petytsprianozalloum:hyperbolic}
Harry Petyt, Davide Spriano, and Abdul Zalloum.
\newblock Hyperbolic models for cat(0) spaces.
\newblock {\em Adv. Math.}, 450(109742):1--66, 2024.

\bibitem[PZ24]{petytzalloum:constructing}
Harry Petyt and Abdul Zalloum.
\newblock Constructing metric spaces from systems of walls.
\newblock {\em arXiv:2404.12057}, 2024.
\newblock With an appendix together with Davide Spriano.

\bibitem[RS20]{robbiospriano:hierarchical}
Bruno Robbio and Davide Spriano.
\newblock Hierarchical hyperbolicity of hyperbolic-2-decomposable groups.
\newblock {\em arXiv:2007.13383}, 2020.

\bibitem[RST22]{russellsprianotran:local}
Jacob Russell, Davide Spriano, and Hung~Cong Tran.
\newblock The local-to-global property for {M}orse quasi-geodesics.
\newblock {\em Math. Z.}, 300(2):1557--1602, 2022.

\bibitem[Ser03]{serre:trees}
Jean-Pierre Serre.
\newblock {\em Trees}.
\newblock Springer Monographs in Mathematics. Springer-Verlag, Berlin, 2003.
\newblock Translated by John Stillwell, Corrected 2nd printing of the 1980
  translation.

\bibitem[She22]{shepherd:cubulation}
Sam Shepherd.
\newblock A cubulation with no factor system.
\newblock {\em arXiv:2208.10421}, 2022.

\bibitem[Ste85]{steprans:characterization}
Juris Steprans.
\newblock A characterization of free abelian groups.
\newblock {\em Proc. Amer. Math. Soc.}, 93(2):347--349, 1985.

\bibitem[Swe95]{swenson:hyperbolic}
Eric~L. Swenson.
\newblock Hyperbolic elements in negatively curved groups.
\newblock {\em Geom. Dedicata}, 55(2):199--210, 1995.

\bibitem[SZ22]{sistozalloum:morse}
Alessandro Sisto and Abdul Zalloum.
\newblock Morse subsets of injective spaces are strongly contracting.
\newblock {\em arXiv:2208.13859}, 2022.

\bibitem[TV00]{thomasvelickovic:asymptotic}
Simon Thomas and Boban Velickovic.
\newblock Asymptotic cones of finitely generated groups.
\newblock {\em Bull. London Math. Soc.}, 32(2):203--208, 2000.

\bibitem[Wis07]{wise:complete}
Daniel~T. Wise.
\newblock Complete square complexes.
\newblock {\em Comment. Math. Helv.}, 82(4):683--724, 2007.

\bibitem[Zal23]{Zalloum:effective}
Abdul Zalloum.
\newblock Effective flipping, skewering and rank rigidity for cubulated groups
  with factor systems.
\newblock {\em arXiv:2305.16298}, 2023.

\end{thebibliography}
\bibliographystyle{alpha}
\end{document}